 \definecolor{skyblue}{rgb}{0.85,0.85,1}
\newtheorem{theorem}{Theorem}[section]
\newtheorem{definition}[theorem]{Definition}
\newtheorem{proposition}[theorem]{Proposition}
\newtheorem{corollary}[theorem]{Corollary}
\newcommand{\RP}{{\mathbb{RP}}}
\newcommand{\SP}{{\mathbb S}}
\newcommand{\tr}{\operatorname{tr}}
\newcommand{\abs}[1]{\left\vert{#1}\right\vert}
\newcommand{\Mcross}{\mathbb{M}^n_{\mathrm{cross}}}
\newcommand{\Mrelax}{\mathbb{M}^n_{\mathrm{relax}}}
\newcommand{\Mtworelax}{\mathbb{M}^2_{\mathrm{relax}}}
\newcommand{\Mcrossthree}{\mathbb{M}^3_{\mathrm{frame}}}
\newcommand{\Mthreerelax}{\mathbb{M}^3_{\mathrm{relax}}}
\newcommand{\Mbold}{\mathbb{M}}
\newcommand{\Mall}{\mathbb{M}^n_{\mathrm{all}}}
\newcommand{\Mallnsqr}{\mathbb{M}^{n^2}_{\mathrm{all}}}
\newcommand{\Mtr}{\mathbb{M}_{\mathrm{tr}}}
\newcommand{\bX}{\mathbf{X}}
\newcommand{\LC}{\left(}
\newcommand{\LV}{\left|} 
\newcommand{\RC}{\right)}
\newcommand{\RV}{\right|}
\newcommand{\mba}{\mathbf{a}}
\newcommand{\mq}{\mathbf{q}}
\newcommand{\R}{{\mathbb R}}
\newcommand{\N}{{\mathbb N}}
\newcommand{\e}{\varepsilon}
\theoremstyle{plain}
\newtheorem{prop}{Proposition}[section]
\newtheorem{lem}[prop]{Lemma}
\theoremstyle{definition}
\newtheorem{rem}{Remark}[section]
\numberwithin{equation}{section}
\def\squarebox#1{\hbox to #1{\hfill\vbox to #1{\vfill}}}
\newcommand{\p}{\partial}
\thanks{We wish to thank Braxton Osting and Ryan Viertel for introducing this problem to the authors. { We also wish to thank David Palmer and Justin Solomon for helpful discussions related to their work, along with the comments from the anonymous referees.}  The first author was supported in part by NSF grant DMS-1615952. The third author was supported in part by NSF grants DMS-1516565 and DMS-2009352. The authors would like to thank the IMA where the project was initiated.}
\begin{document}    

\title{A variational method for generating $n$-cross fields
using higher-order $Q$-tensors}

\author{Dmitry Golovaty}
\address{Department of Mathematics, The University of Akron, Akron, OH 44325, USA}
\email{dmitry@uakron.edu}

\author{Jose Alberto Montero}
\address{Facultad de Matem\'aticas, Pontificia Universidad Cat\'olica de Chile,
Vicu\~na Mackenna 4860, San Joaqu\'in, Santiago, Chile.}
\email{amontero@mat.puc.cl}

\author{Daniel Spirn}
\address{School of Mathematics, University of Minnesota, Minneapolis, MN 55455}
\email{spirn@umn.edu}

\begin{abstract}
{An $n$-cross field is a locally-defined orthogonal coordinate system invariant with respect to the cubic symmetry group.  Cross fields are finding wide-spread use in mesh generation, computer graphics, and materials science among many applications. It was recently shown in \cite{Schaft} that $3$-cross fields can be embedded into the set of symmetric $4$th-order tensors. The concurrent work \cite{palmer2019algebraic} further develops a relaxation of this tensor field via a certain set of varieties.  In this paper, we consider the problem of generating an arbitrary $n$-cross field using a fourth-order $Q$-tensor theory that is constructed out of tensored projection matrices. We establish that by a Ginzburg-Landau relaxation towards a global projection, one can reliably generate an $n$-cross field on arbitrary Lipschitz domains. Our work   provides  a rigorous approach that offers several new results including porting the tensor framework to arbitrary dimensions, providing a new relaxation method that embeds the problem into a global steepest descent, and offering a relaxation scheme for aligning the cross field with the boundary. Our approach is designed to fit within the classical Ginzburg-Landau PDE theory, offering a concrete road map for the future careful study of singularities of energy minimizers.}
\end{abstract}

\maketitle

\section{Introduction}
{In this paper we use variational methods for tensor-valued functions in order to construct $n$-cross fields in $\R^n$.  Loosely speaking, an $n$-cross field associates a set of $n$ orthogonal lines with every point in $\R^n$.} A particular question of interest is whether it is possible to construct a smooth field of $n$-crosses in $\Omega$, assuming certain behavior of that field on $\partial\Omega$. This problem has received a considerable attention in computer graphics and mesh generation---see for example the review of the many applications of cross and frame fields in \cite{Vaxman}. In two dimensions (or on surfaces in three dimensions) { quad} meshes can be obtained by finding proper parametrization based on a $2$-cross field defined over a triangulated surface \cite{Li:2012:AMU:2366145.2366196}.  

A similar two step procedure in three dimensions has been proposed recently by a number of authors with the aim to generate hexahedral meshes. First, a 3-frame field is constructed by assigning a frame to each cell of a tetrahedral mesh, then a parametrization algorithm is applied to generate a hexahedral mesh \cite{05_IMR23_Kowalski,Nieser_2011}. From a mathematical point of view, the first step in this procedure requires one to construct a 3-cross field in $\Omega\subset\R^3$ that is sufficiently smooth and properly fits to $\partial\Omega$, e.g., by requiring that one of the lines of the field is orthogonal to $\partial\Omega$. Generally, a cross field that satisfies this type of the boundary condition has singularities on $\partial\Omega$ and/or in $\Omega$ due to topological constraints, as follows from an appropriate analog of the Hairy Ball Theorem (see Section~\ref{sec:boundary}). 

A number of approaches have been proposed to construct a $2$- or $3$-frame and cross fields or their analogs. Some schemes involve identification of the field on the boundary and its subsequent reconstruction in the interior of the domain, for example, by using 
an optimization procedure \cite{bommes}. In three dimensions, the first task can be accomplished by looking for the harmonic map on the boundary surface that has one of the $3$-frame vectors orthogonal to the boundary \cite{BERNARD2014175}, by  prescribing the $3$-frame field on the boundary \cite{Li:2012:AMU:2366145.2366196}. The reconstruction of the $3$-frame field in the interior is achieved by propagating the frame from the boundary and then optimizing its smoothness by minimizing a function that, e.g., penalizes for frame changes in the neighboring tetrahedra \cite{Li:2012:AMU:2366145.2366196,BERNARD2014175}. Other, $2$-frame reconstruction  algorithms over surfaces rely on solving a Ginzburg-Landau equation \cite{ViertelOsting,BEAUFORT2017219}. Some authors do not distinguish between the frames in the interior of the domain and on its boundary and simply optimize the frame distribution via energy minimization \cite{05_IMR23_Kowalski}; here the $3$-frame has also been described by using spherical harmonics \cite{Huang:2011:BAS:2070781.2024177}. 

The related recent work has been done on singularity-constrained octahedral fields for hexahedral meshing \cite{Liu:2018:SOF:3197517.3201344}, boundary element octahedral fields in volumes \cite{Solomon:2017:BEO:3087678.3065254}, smoothness driven frame field generation for hexahedral meshing \cite{KOWALSKI201665}, robust hex-dominant mesh generation using field-guided polyhedral agglomeration \cite{Gao:2017:RHM:3072959.3073676}, symmetric moving frames \cite{Corman:2019:SMF:3306346.3323029}, and all-hex meshing using closed-form induced polycube \cite{Fang:2016:AMU:2897824.2925957}.

{As an aside, note that the problem of constructing a 3-cross field subject to prescribed boundary conditions is related to the problem of modeling of dislocation structures in crystalline materials \cite{Bilby1955,Epstein}. We do not pursue this relationship further in the present paper.}

What then is an "optimal" way to automatically generate a 3-cross field that satisfies prescribed boundary conditions and is not too singular? A promising direction was identified in \cite{BEAUFORT2017219,ViertelOsting} { for 2-cross fields} where a connection to the Ginzburg-Landau theory was noticed. This connection is transparent in two dimensions where a frame- or a cross-field is fully defined by a single angle. The appropriate descriptors in three dimensions, however, {was not known until very recently \cite{Schaft,palmer2019algebraic}}.   {The corresponding rigorously justified Ginzburg-Landau relaxation for 3-cross fields remained unclear, and one of our contributions is to present a natural approach based on our prior experience with the Ginzburg-Landau-type theories for vector- and matrix-valued maps.

The primary goal of this work is to propose a unified tensor-based approach to constructing $n$-cross fields that takes advantage of classical PDE theory. To this end, in what follows we will not be interested in implementing the most efficient method of solving relevant boundary value problems that arise within our framework. Instead, we use an off-the-shelf finite element analysis solver (COMSOL \cite{comsol}) to compute the gradient descent in order to arrive at local minimizers of our energy functional and to visualize singular sets that characterize these minimizers.  This numerical strategy could likely be sped up by using, for example, an MBO-type scheme. 
It is unknown whether such MBO-type schemes converge, as $\varepsilon \to 0$ and $t\to \infty$, to the same minimizers as this FEA solver for the Ginzburg-Landau system, and this merits further consideration (see Section~\ref{sec:discussion}). Although an analysis of this system of PDEs is beyond the scope of the present paper, the model that we propose can be analyzed by extending the scope of the existing Ginzburg-Landau theory as we will discuss in a follow-up publication. In particular it is unclear if the minimizers we obtain are global or local, i.e., whether or not the singular structures that we see numerically are ``optimal".

Our work is closely intertwined with that in \cite{Schaft,palmer2019algebraic}. We postpone the discussion of the exact relationship between our methodology and the developments in \cite{Schaft,palmer2019algebraic} until Section \ref{sec:ZSP}, after the necessary ideas and terminology have been introduced.}

\subsection{Our work}

{We begin with the following definitions. Given $k,n\in\N$, a $k$-{\em frame} $F^k$ in $\R^n$ is an ordered set of $k$ vectors in $\left\{{\bf a}_i\right\}_{i=1}^k\subset\R^n$. If the vectors $\left\{{\bf a}_i\right\}_{i=1}^k$ are mutually orthonormal, then we say that the $k$-frame is orthonormal. Associated with each orthonormal frame $F^k$, we define a $k$-{\em cross} $C^k$ as an unordered set of $k$ equivalence classes corresponding to $\left\{{\bf a}_i\right\}_{i=1}^k$ in $\R^n\backslash\{0\}$ under the equivalence relation ${\bf y}\sim\lambda{\bf x}$ for all $\lambda\neq0$. In other words, a $k$-cross is an unordered set of $k$ orthogonal elements of $\mathbb{RP}^{n-1}$; it can also be thought of as $k$ mutually orthogonal lines $\left\{l_i\right\}_{i=1}^k$ in $\R^n$, where $l_i$ is parallel to ${\bf a}_i$ for each $i=1,\ldots,k$. Without loss of generality, in what follows we will always assume that $k=n$ and simply refer to $n$-frames and $n$-crosses. We will also drop superscript $k$ in the respective notation for frames and crosses. Note that an orthonormal $n$-frame is also an orthonormal basis of $\R^n$. Note that the notions of a frame and a cross vary throughout the literature and sometimes these are even used interchangeably. Here we make these notions precise and distinct. With these definitions in hand, we can consider fields of $n$-frames and $n$-crosses on a domain $\Omega\subset\R^n$.}

{We now describe our approach to representing $n$-cross field and their relaxations, which is motivated by our experience with  nematic liquid crystals.} In  nematic liquid crystals, partial orientational order exists within certain temperature ranges so that a nematic sample has a preferred molecular orientation at any given point of the domain it occupies. One possible description of a nematic then utilizes a unit vector field ${\bf n}:\R^3\to\SP^2$ at every point of the domain $\Omega\subset\R^3$; note that this essentially generates a $1$-frame field in $\Omega$. 

The physics of the problem, however, dictates the same probability of finding a head or a tail of a nematic molecule pointing in a given direction, hence the appropriate descriptor of the nematic state must be invariant with respect to inversion ${\bf n}\to-{\bf n}$. The oriented object satisfying this symmetry condition is not ${\bf n}$ but rather the projection matrix ${\bf n}\otimes{\bf n}$ that can also be identified with an element of the projective space $\RP^2$ or a $1$-cross. Thus we can interpret a field of projection matrices on $\Omega$ as a $1$-cross field. We will generalize this connection between the projection matrices and $1$-cross fields to higher dimensional matrices and $n$-cross fields in the remainder of this paper. 

 The connection, in fact, goes a bit deeper if one is interested in exploring singularities of cross fields. As was already alluded to above, a nematic configuration in $\Omega$ satisfying certain boundary conditions on $\partial\Omega$ is generally subject to topological constraints that lead to formation of singularities in $\Omega$. Within a variational theory for nematic liquid crystals one typically assumes that an equilibrium configuration minimizes some form of elastic energy associated with spatial changes of the preferred orientation. In the simplest approximation, this energy reduces to a Dirichlet integral
 \[\int_\Omega{\left|\nabla u\right|}^2 dx \]
 of $u={\bf n}$ or $u={\bf n}\otimes{\bf n}$, depending on the kind of order parameter that one needs. It turns out, however, that for certain types of singularities (e.g., vortices in $\R^2$ or disclinations in $\R^3$) that are topologically necessary, this energy is infinite. One way around this difficulty is to replace the nonlinear constraints on the order parameter field by adding an appropriate, heavily-penalized potential to the energy that forces the constraint to be almost satisfied a.e. in $\Omega$ in an appropriate limit. For example, instead of using a field of projection matrices, the relaxed competitors can be assumed to take values in the space of symmetric matrices $Q$ of trace $1$ satisfying the same linear constraints as the projection matrices. Then the property $P^2-P=0$ of the projection matrices can be enforced by adding the term $\frac{1}{\varepsilon^2}{\left|Q^2-Q\right|}^2$ to the energy
 and letting $\e \to 0$ (cf. \cite{GolovatyMontero}).  This results in a prototypical expression
 \[\mathcal{E}(Q)=\frac{1}{2}\int_\Omega{{\left|\nabla Q\right|}^2+\frac{1}{\varepsilon^2}{\left|Q^2-Q\right|}^2} dx,\]
 that lies at the core of the Ginzburg-Landau-type theory for nematic liquid crystals (with a minor caveat that, for physical reasons, this theory named after Landau and de Gennes, considers translated and dilated version of $Q$ \cite{Newton}). In this paper, we show that exactly the same approach can be undertaken to construct $n$-cross fields in $\R^n$.
 
 { 
Our framework, described in detail in Section~\ref{sec:ncross},  provides a new and rigorous approach that offers some advantages over previous work (though the MBO framework developed in \cite{palmer2019algebraic} may be better optimized for running numerical experiments).  First, our framework applies in arbitrary dimensions, and it naturally encodes the associated 4-tensor that arises due to  boundary conditions. Indeed, the proposed relaxation provides a global energy that can be studied analytically.  Second, the new Ginzburg-Landau relaxation, that embeds the problem into a global steepest descent, allows for a new selection principle for the limiting $n$-cross field.  Finally, we provide additional relaxations schemes that can allow for weak anchoring of  the boundary with the $n$-cross field.}

In Section~\ref{sec:boundary}, we introduce the notion of an $n$-cross on an $n-1$-dimensional manifold and then use this notion in Section~\ref{sec:GLn} in order to define natural boundary conditions for $n$-cross-valued maps. This allows us to formulate a Ginzburg-Landau-type variational problem for relaxed, tensor-valued maps. Sections~\ref{sec:2D} and \ref{sec:3D} are devoted to $2$- and $3$-cross fields, respectively, and Section~\ref{sec:examples} presents several computational examples of $3$-cross field reconstructions in three-dimensional domains. Here we give one example of a tensor-valued solution of the Ginzburg-Landau problem that replicates a setup discussed in \cite{viertel2016analysis} (cf. \cite{raysokolov11}). In Figs.~\ref{fig0}-\ref{fig01} we show the $3$-cross field distribution in the domain consisting of the cube with a notch in the shape of a cylinder. On the boundary, one of the lines of the $3$-cross field is assumed to be perpendicular to the surface of the boundary and the cross field is obtained by solving the system of Ginzburg-Landau PDEs subject to the natural boundary condition. The result reproduces that in \cite{viertel2016analysis}, where it was computed using a different technique. In Fig. \ref{fig:bone} we show the singular set of the solution to the same problem in a bone-shaped domain. The singularity is identical to that found in \cite{palmer2019algebraic} while a different, twisted structure was obtained in \cite{raysokolov11}. 

{Finally, in the Appendix we prove Theorem~\ref{thm:limitrelax}, describe the connection between our approach and the odeco framework found in \cite{palmer2019algebraic}, and present the system of partial differential equations governing gradient descent.}
\begin{figure}
\begin{center}
\includegraphics[scale=.4]{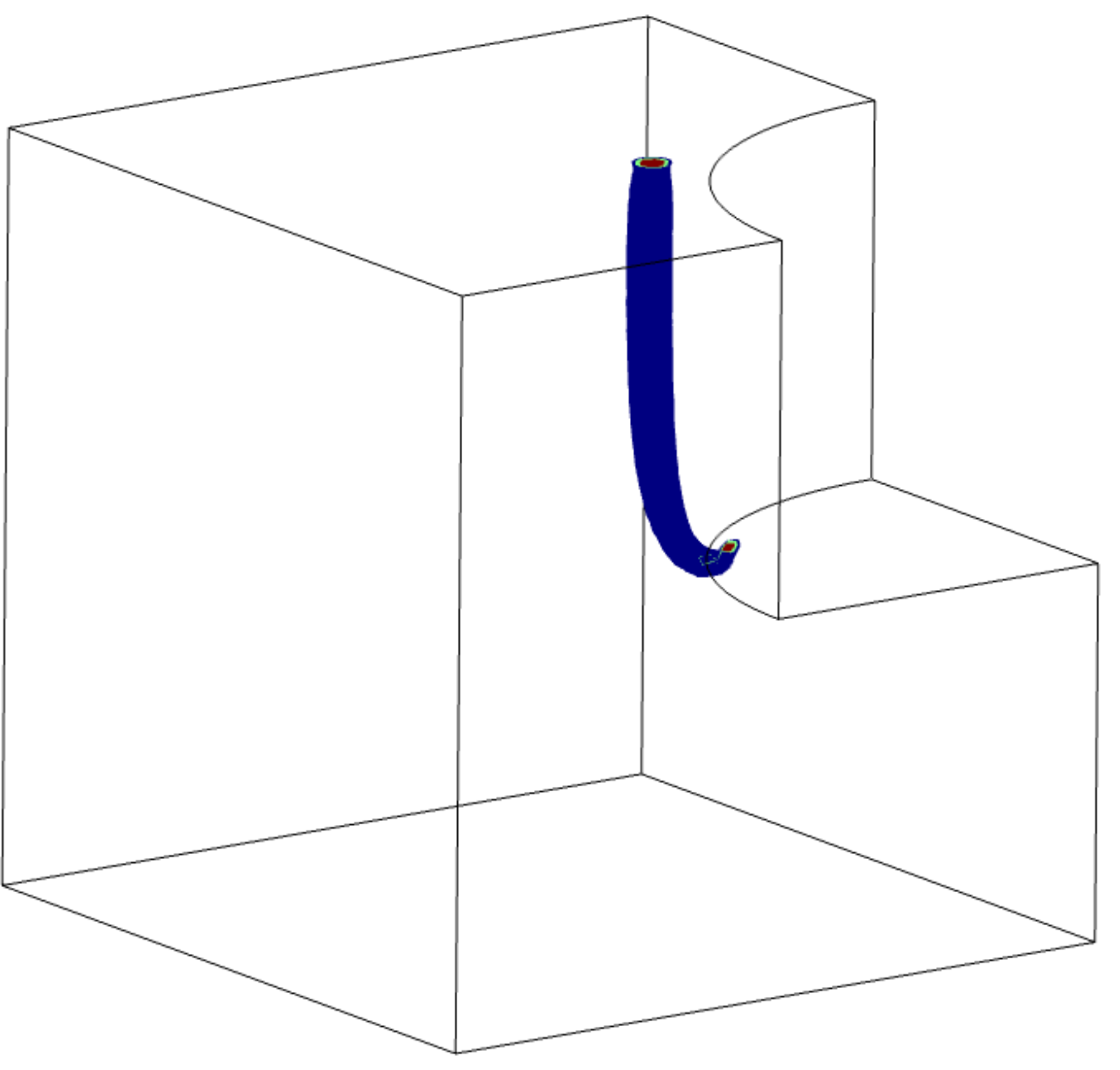} \qquad \includegraphics[scale=.4]{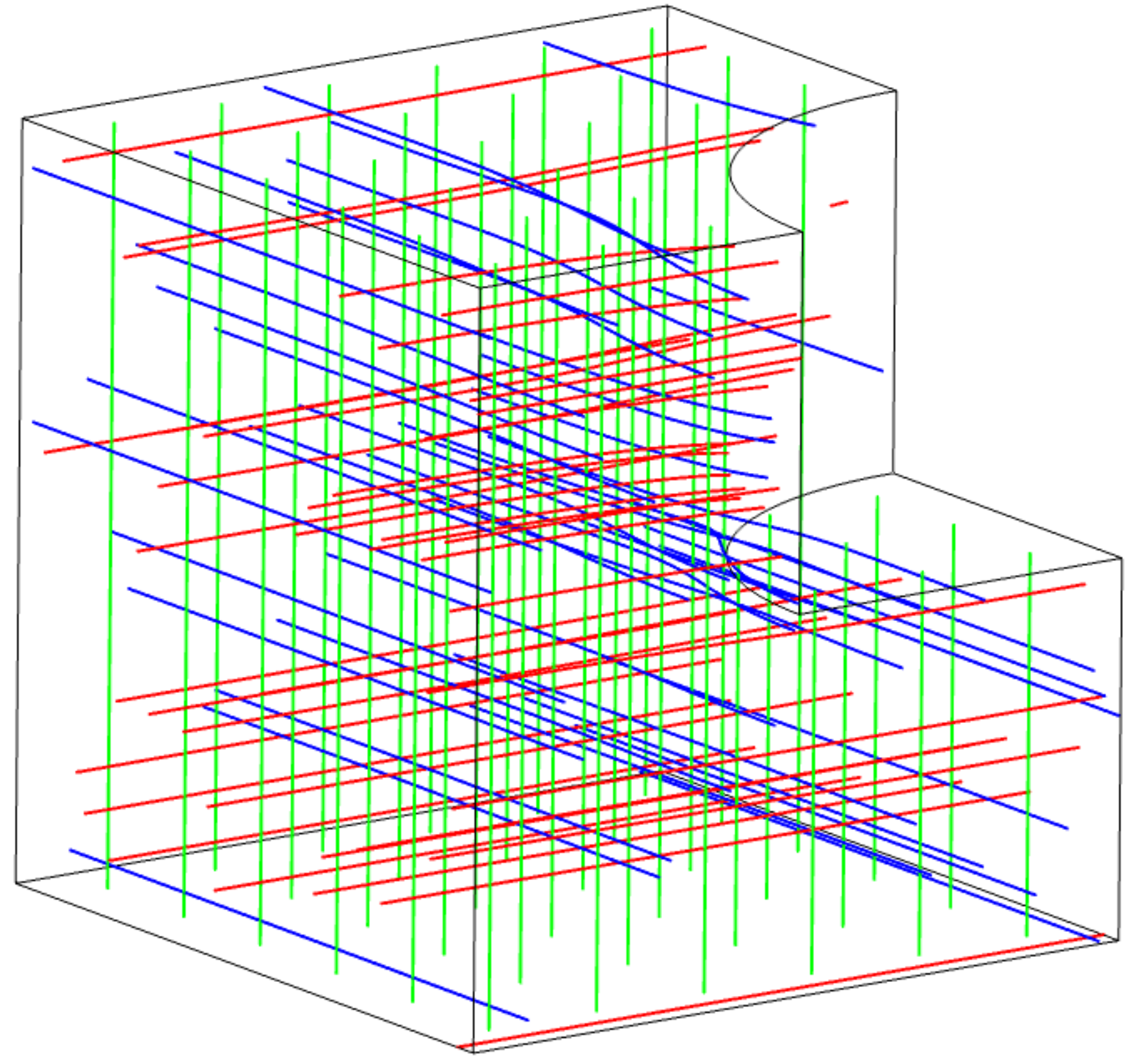}
\caption{Solution of the Ginzburg-Landau PDE subject to the natural boundary conditions in a cube-shaped domain with a cylindrical notch: A disclination in the $3$-cross field via level surface plot for $\frac{1}{\e^2}|\mathcal{Q}^2 - \mathcal{Q}|^2$ (left); Streamlines of the $3$-cross solution field with colors representing three different directions followed along the cross field (right).}\label{fig0}
\end{center}
\end{figure}
\begin{figure}
\begin{center}
\includegraphics[scale=.4]{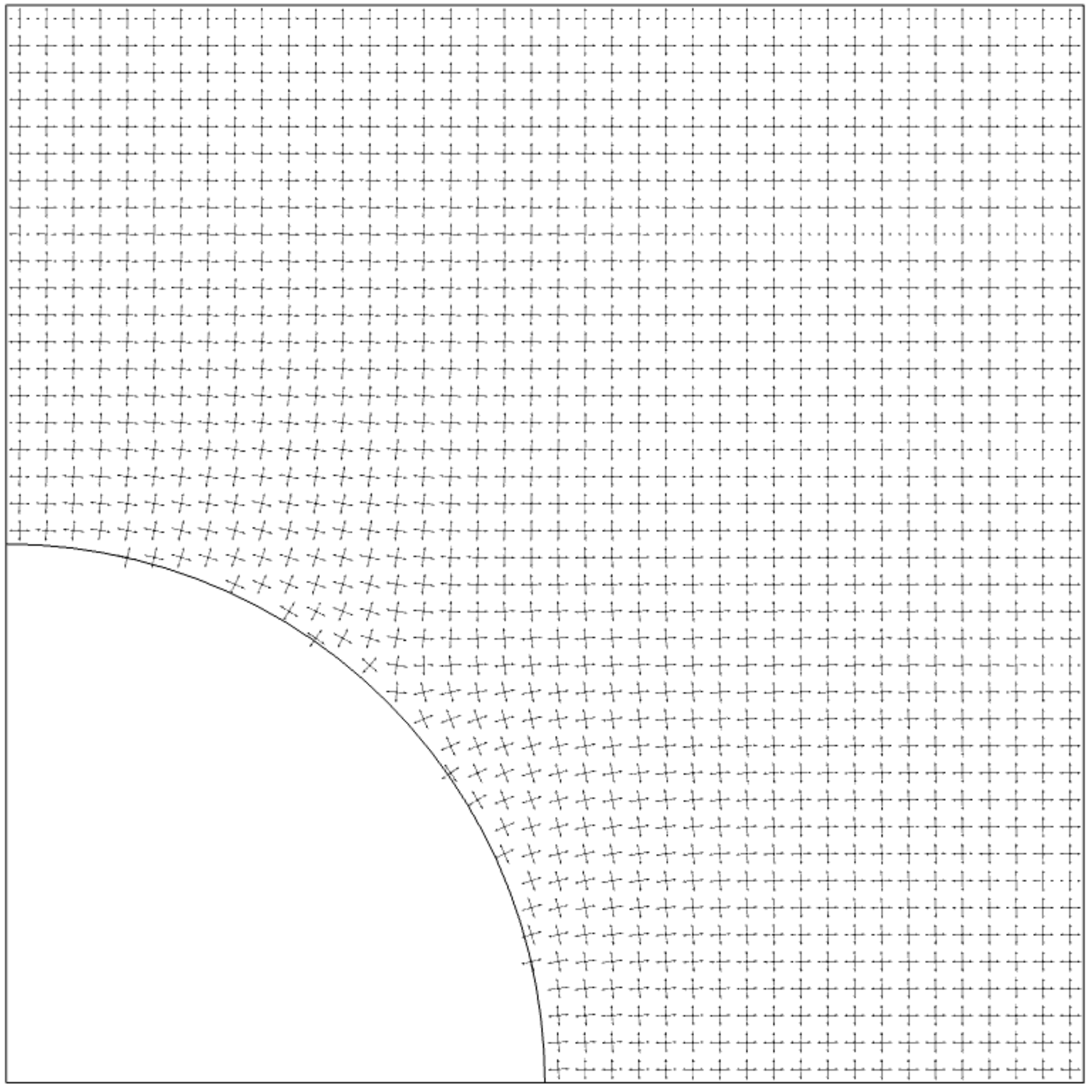}
\caption{Solution of the Ginzburg-Landau PDE subject to the natural boundary conditions in a cube-shaped domain with a cylindrical notch: A horizontal cross-section of the $3$-cross solution field at the level intersecting the notch.}\label{fig01}
\end{center}
\end{figure}
\begin{figure}
\begin{center}
\includegraphics[scale=.4]{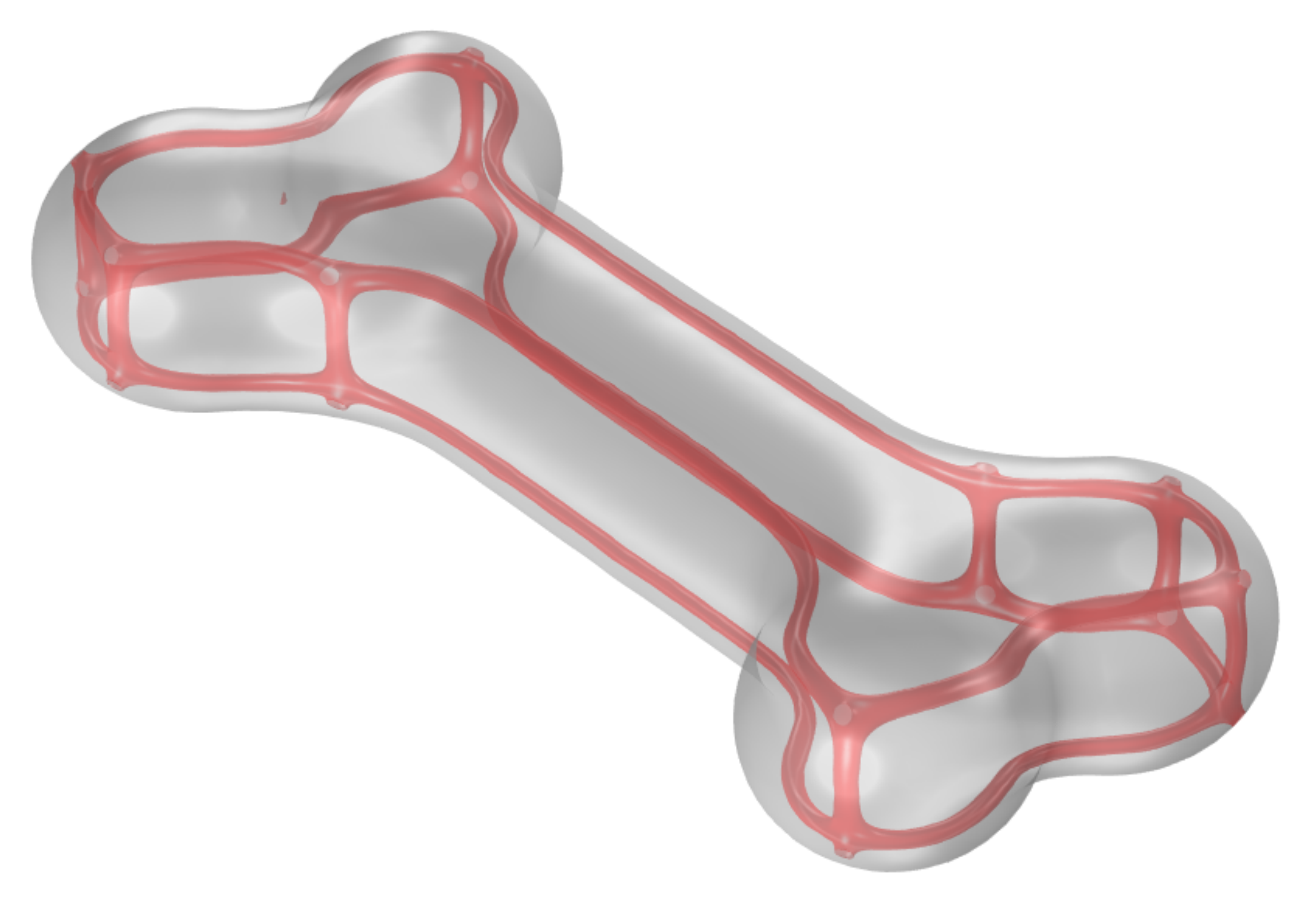}
\caption{{Solution of the Ginzburg-Landau PDE subject to the natural boundary conditions in a bone-shaped domain. Note that the singular set (marked in red) does not exhibit twisting (cf. \cite{palmer2019algebraic}).}}\label{fig:bone}
\end{center}
\end{figure}
\newpage
\section{$n$-crosses via higher order $\mathcal{Q}$-tensors}
\label{sec:ncross}

{ 
In this section we rigorously develop a tensor representation of $n$-cross fields.} In the following { discussion} we distinguish a vector as $\mathbf{a}$, a square matrix as $A$, and a tensor as $\mathcal{A}$. Let $A,B$ be two square matrices.  We denote the inner product $\left< A,B \right> = \tr (B^T A)$ that induces the norm, $|A|^2 = \left< A,A \right>$.  Finally, we will let $[A,B] = AB - BA$ and $(A,B) = AB + BA$.

Consider $n$ mutually orthogonal vectors $\mba^k \in \SP^{n-1}$ with components $a_j^k$, where $j,k=1,\ldots,n$ and the associated $n$-cross. Our main result in this section is to express the $n$-cross in terms of tensor products of $n$ projection matrices. For $n\in\N$ denote $\mathbb{M}^n$  the set of $n\times n$ symmetric matrices with real entries. Since the $n$-cross is defined by $n$ orthogonal line fields, we introduce $n$ projection matrices
\[
P^k_{ij} = (\mathbf{a}^k\otimes \mathbf{a}^k)_{ij} = a^k_i a^k_j
\]
in $\Mtr^n:=\left\{
A \in \Mbold^n : \tr A =1\right\}$ that are invariant with respect to inversions $\mba^k\to-\mba^k$. We have 
\[
P^i P^j=(\mba^i\otimes\mba^i)(\mba^j\otimes\mba^j)=\delta_{ij}(\mba^i\otimes\mba^j),\]
so that
\[ \left< P^i,P^j \right> =\mathrm{tr}\left( (P^i)^T P^j\right)=\mathrm{tr}\left(P^i P^j\right)=\delta_{ij}.\]

An $n$-cross can equivalently be defined as an unordered $n$-tuple of projection matrices $P^k$, $k=1,\ldots,n$. Thus we would like to define a mathematical object that incorporates all $P^k$, $k=1,\ldots,n$ and is invariant with respect to permutations of $P^j$ and $P^k$ for all $j,k=1,\ldots,n$. Clearly, { $\sum_{i=1}^n P^i$ is one possible candidate, however,
\[\sum_{i=1}^n P^i=I,\]
where} $I\in \Mbold^n$ is an identity matrix, hence this sum contains no information about a particular $n$-cross and a higher-order quantity is thus needed. Similar to how we used tensor products to generate elements of the projective space $\mathbb{RP}^n$ from vectors in $\SP^{n-1}$, we now use products of projection matrices to obtain higher order tensors in $\Mtr^n \otimes \Mtr^n \subset \Mbold^{n^2}$. We 
can think about a tensor of this type in a number of different ways.  Here we will interpret it as a matrix of matrices and define the tensor product of two matrices as 
\begin{align*}
\mathcal{Q}^k_{ij}  = (P^k\otimes P^k)_{ij} = a^k_i a^k_j P^k \\
\end{align*}
with elements 
\begin{equation} 
\label{eq:Qterm}
\mathcal{Q}^k_{ijrs}  = (P^k\otimes P^k)_{ijrs} 
 = \LC (\mathbf{a}^k\otimes \mathbf{a}^k) \otimes (\mathbf{a}^k\otimes \mathbf{a}^k) \RC_{ijrs} 
 = a^k_i a^k_j a^k_r a^k_s
\end{equation}
and the associated product for which the blocks satisfy
\[(\mathcal{Q}\mathcal{R})_{ij}=\sum_{k=1}^n \mathcal{Q}_{ik} \mathcal{R}_{kj}.\]
Whenever convenient, we will also think of the same tensor as an element $Q \in \Mbold^{n^2}$:
\[
Q^k_{pq} = \mathcal{Q}^k_{ijrs} \qquad p= i\,(n-1)+r \hbox{ and } q=j\,(n-1)+s.
\]
associated with the standard matrix product in $\Mbold^{n^2}$.

We now define the object $\mathcal{Q}$ representing the $n$-cross as the sum of $\mathcal{Q}^k$ over the $n$ directions, that is
\begin{equation}
\label{e:Qdef}
\mathcal{Q} = \sum_{k=1}^n \mathcal{Q}^k,
\end{equation}
or, equivalently,
\begin{equation}
\label{e:Qsubdef}
\mathcal{Q}_{ij}  = \sum_{k=1}^n a^k_i a^k_j P^k, 
\end{equation}
or
\begin{equation}
\label{e:Qelementsdef}
\mathcal{Q}_{ijrs}  = \sum_{k=1}^n a^k_i a^k_j a^k_r a^k_s.
\end{equation}
By construction, $\mathcal Q$ clearly has the symmetries of the $n$-cross: it is invariant with respect to inversions and permutations of the frame vectors ${\bf a}_j$.

We can prove several important, albeit simple, results that arise from the construction of the tensor $\mathcal{Q}$.

\begin{lem} \label{lem:ProjTen}
$\mathcal{Q}^2 = \mathcal{Q}$
\end{lem}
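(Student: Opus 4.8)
The plan is to compute $\mathcal{Q}^2$ directly from the block-multiplication rule $(\mathcal{Q}\mathcal{R})_{ij} = \sum_{k} \mathcal{Q}_{ik}\mathcal{R}_{kj}$ and exploit the orthogonality relations $P^iP^j = \delta_{ij}(\mathbf{a}^i\otimes\mathbf{a}^j)$ established just before the lemma. First I would expand $\mathcal{Q}^2 = \sum_{k,\ell} \mathcal{Q}^k \mathcal{Q}^\ell$ using \eqref{e:Qdef}, so it suffices to understand the product $\mathcal{Q}^k\mathcal{Q}^\ell$ of two elementary tensors. Writing $\mathcal{Q}^k = P^k \otimes P^k$ with blocks $\mathcal{Q}^k_{ij} = a^k_i a^k_j P^k$, the $(i,j)$-block of the product is
\begin{equation*}
(\mathcal{Q}^k\mathcal{Q}^\ell)_{ij} = \sum_{m=1}^n \mathcal{Q}^k_{im}\mathcal{Q}^\ell_{mj} = \sum_{m=1}^n a^k_i a^k_m \, a^\ell_m a^\ell_j \, P^k P^\ell = \LC \sum_{m=1}^n a^k_m a^\ell_m \RC a^k_i a^\ell_j \, P^k P^\ell.
\end{equation*}
The sum $\sum_m a^k_m a^\ell_m = \mathbf{a}^k\cdot\mathbf{a}^\ell = \delta_{k\ell}$ by orthonormality of the frame, and likewise $P^kP^\ell = \delta_{k\ell}(\mathbf{a}^k\otimes\mathbf{a}^\ell)$, so every cross term with $k\neq\ell$ vanishes and $\mathcal{Q}^k\mathcal{Q}^\ell = \delta_{k\ell}\mathcal{Q}^k$.

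Summing over $k$ and $\ell$ then gives $\mathcal{Q}^2 = \sum_{k,\ell}\mathcal{Q}^k\mathcal{Q}^\ell = \sum_{k=1}^n \mathcal{Q}^k = \mathcal{Q}$, which is the claim. An alternative, perhaps cleaner, route is to observe that $\mathcal{Q}^k = P^k\otimes P^k$ and that the block product defined here is precisely the Kronecker-product multiplication, so $(P^k\otimes P^k)(P^\ell\otimes P^\ell) = (P^kP^\ell)\otimes(P^kP^\ell) = \delta_{k\ell}(P^k\otimes P^k)$, again using $P^kP^\ell=\delta_{k\ell}$ from the preceding computation (here one must verify $\delta_{k\ell}^2=\delta_{k\ell}$, which is immediate). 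I would probably present the explicit index computation since it makes the role of the orthonormality transparent and does not require the reader to check that the ad hoc block product coincides with the standard Kronecker product.

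I do not anticipate a genuine obstacle here: the lemma is essentially a bookkeeping consequence of two facts — that the $P^k$ are mutually orthogonal idempotents and that tensoring preserves products. The only point needing a modicum of care is making sure the two distinct uses of orthonormality (the vector dot product $\mathbf{a}^k\cdot\mathbf{a}^\ell=\delta_{k\ell}$ appearing from the inner block contraction, and the matrix relation $P^kP^\ell=\delta_{k\ell}(\mathbf a^k\otimes\mathbf a^\ell)$) are both invoked correctly and that their product $\delta_{k\ell}\cdot\delta_{k\ell}$ is collapsed to $\delta_{k\ell}$. If the paper later wants $\mathcal{Q}$ to be a genuine orthogonal projection (self-adjoint as well as idempotent), that symmetry is already built in by construction as noted in the text, but it is not needed for this particular statement.
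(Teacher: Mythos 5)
Your proof is correct and follows essentially the same computation as the paper's: both reduce the matter to the orthonormality relations $\sum_m a^k_m a^\ell_m = \delta_{k\ell}$ and $P^kP^\ell = \delta_{k\ell}(\mathbf{a}^k\otimes\mathbf{a}^\ell)$, differing only in bookkeeping — you first establish $\mathcal{Q}^k\mathcal{Q}^\ell = \delta_{k\ell}\mathcal{Q}^k$ for the elementary tensors and then sum, whereas the paper substitutes the double sum for $\mathcal{Q}_{ik}\mathcal{Q}_{kj}$ directly and contracts indices in one long chain. Your organization is, if anything, a bit cleaner, and the Kronecker-product remark is a fine alternative framing.
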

\begin{proof}
\begin{align*}
    \LC \mathcal{Q} \RC^2_{ij}
    & = \sum_{k=1}^n \mathcal{Q}_{ik} \mathcal{Q}_{kj} \\
    & = \sum_{k=1}^n \LC \sum_{\ell=1}^n a^\ell_i a^\ell_k P^\ell \RC \LC \sum_{m=1}^n a^m_k a^m_j P^m \RC \\
    & = \sum_{k=1}^n \sum_{\ell=1}^n \sum_{m=1}^n a^\ell_i a^\ell_k a^m_k a^m_j P^\ell P^m \\
    & = \sum_{k=1}^n \sum_{\ell=1}^n \sum_{m=1}^n a^\ell_i a^\ell_k a^m_k a^m_j \LC \mba^\ell \otimes \mba^\ell \RC  \LC \mba^m \otimes \mba^m \RC \\
    & =  \sum_{\ell=1}^n \sum_{m=1}^n a^\ell_i a^m_j \LC \sum_{k=1}^n a^\ell_k a^m_k \RC \LC \mba^\ell \otimes \mba^m \RC \\
     & =  \sum_{\ell=1}^n \sum_{m=1}^n a^\ell_i a^m_j \LC \mba^\ell \otimes \mba^m \RC \delta_{\ell m} \\
     & =\sum_{m=1}^n a^m_i a^m_j \LC \mba^m \otimes \mba^m \RC  \\
    & = \mathcal{Q}_{ij}
\end{align*}
where we used $\mba^k \cdot \mba^\ell = \delta_{k \ell}$.
\end{proof}
The consequence of invariance of crosses under permutations of lines that form a cross is the following
\begin{lem} \label{lem:permute}
$\mathcal{Q}$ is a symmetric tensor. In particular, it is invariant with respect to permutations of indices:
\begin{equation}
\label{eq:Qsymm}
\mathcal{Q}_{ijrs} = \mathcal{Q}_{\sigma(ijsr)}  
\end{equation}
where $\sigma \in S_4$, the group of permutations on $\{1,2,3,4\}$.
\end{lem}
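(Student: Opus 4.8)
The plan is to read off the claim directly from the closed-form expression for the entries of $\mathcal{Q}$ given in \eqref{e:Qelementsdef}, namely
\[
\mathcal{Q}_{ijrs} = \sum_{k=1}^n a^k_i a^k_j a^k_r a^k_s.
\]
The key observation is that each summand is a product of four scalars $a^k_i$, $a^k_j$, $a^k_r$, $a^k_s$, all sharing the same upper index $k$; since multiplication of real numbers is commutative, the product $a^k_i a^k_j a^k_r a^k_s$ is unchanged under any permutation of the four lower indices $(i,j,r,s)$. Summing over $k$ preserves this invariance, so $\mathcal{Q}_{ijrs} = \mathcal{Q}_{\sigma(i)\sigma(j)\sigma(r)\sigma(s)}$ for every $\sigma \in S_4$.

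Concretely, the steps are: first, recall \eqref{e:Qelementsdef} as the definition of the components of $\mathcal{Q}$. Second, fix $\sigma \in S_4$ and write $\mathcal{Q}_{\sigma(ijrs)} = \sum_{k=1}^n a^k_{\sigma(i)} a^k_{\sigma(j)} a^k_{\sigma(r)} a^k_{\sigma(s)}$. Third, use commutativity of the product in $\RR$ within each term of the sum to reorder the four factors back into the order $a^k_i a^k_j a^k_r a^k_s$, obtaining $\mathcal{Q}_{\sigma(ijrs)} = \sum_{k=1}^n a^k_i a^k_j a^k_r a^k_s = \mathcal{Q}_{ijrs}$. Since $S_4$ is generated by transpositions, it would also suffice to verify invariance under the swaps $i \leftrightarrow j$, $r \leftrightarrow s$, and $j \leftrightarrow r$ (or any other generating set), but the direct argument from commutativity handles all of $S_4$ at once with no extra work.

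There is essentially no obstacle here: the lemma is an immediate consequence of the symmetry already built into the construction \eqref{eq:Qterm}, where each $\mathcal{Q}^k$ is a fourfold tensor power $\mba^k \otimes \mba^k \otimes \mba^k \otimes \mba^k$ of a single vector, hence automatically totally symmetric. The only thing worth a sentence of care is making sure the statement is about the full symmetric group $S_4$ acting on the four slots (not merely the obvious block symmetry $\mathcal{Q}_{ij} = \mathcal{Q}_{ji}$ visible from \eqref{e:Qsubdef} together with $P^k$ being symmetric), and the closed form \eqref{e:Qelementsdef} makes that transparent. The remark that this "is the consequence of invariance of crosses under permutations of lines" is a bit of a red herring for the proof itself: the total symmetry of $\mathcal{Q}_{ijrs}$ in its indices holds term-by-term for each direction $k$ and does not actually use orthogonality or the summation over the $n$ lines at all; it is purely the symmetry of a rank-one symmetric four-tensor.
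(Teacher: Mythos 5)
Your proof is correct and is essentially the paper's own argument: the authors' entire proof is the one-line observation ``This follows immediately from the form of \eqref{e:Qelementsdef},'' which is precisely the commutativity-of-scalars argument you spell out. Your added remarks (on generating $S_4$ by transpositions, and on the symmetry being term-by-term in $k$) are accurate but not needed.
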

\begin{proof}
This follows immediately from the form of \eqref{e:Qelementsdef}.
\end{proof}
The remaining facts deal with submatrices of $\mathcal{Q}$.
\begin{lem} \label{lem:traceQsub}
Submatrices $\mathcal{Q}_{ij}$, $i,j=1\ldots,n$ of $\mathcal{Q}$ are symmetric and satisfy the following trace condition: 
\begin{equation}
\tr Q_{ij} = \delta_{ij}
\end{equation}
\end{lem}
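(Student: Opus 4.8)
The plan is to compute both the symmetry and the trace condition directly from the defining formula \eqref{e:Qsubdef}, namely $\mathcal{Q}_{ij} = \sum_{k=1}^n a^k_i a^k_j P^k$, where each $P^k = \mba^k \otimes \mba^k$ is a projection matrix. The symmetry of $\mathcal{Q}_{ij}$ as an $n\times n$ matrix is immediate: each $P^k$ is symmetric since $(P^k)^T = (\mba^k\otimes\mba^k)^T = \mba^k\otimes\mba^k = P^k$, and the scalar coefficients $a^k_i a^k_j$ do not affect symmetry, so $\mathcal{Q}_{ij}^T = \sum_{k=1}^n a^k_i a^k_j (P^k)^T = \mathcal{Q}_{ij}$. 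Alternatively, this is just the special case $r\leftrightarrow s$ of \lemref{lem:permute}, reading off indices $r,s$ as the matrix indices of the submatrix.

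For the trace condition, I would take the trace of \eqref{e:Qsubdef} and use linearity together with the already-established fact $\tr P^k = \la P^k, P^k\ra = 1$ recorded in the text (equivalently $\tr(\mba^k\otimes\mba^k) = |\mba^k|^2 = 1$ since $\mba^k \in \SP^{n-1}$). This gives
\[
\tr Q_{ij} = \sum_{k=1}^n a^k_i a^k_j \,\tr P^k = \sum_{k=1}^n a^k_i a^k_j = \sum_{k=1}^n a^k_i a^k_j.
\]
The remaining step is to recognize the last sum as $\delta_{ij}$. This is precisely the orthonormality relation for the frame written ``the other way'': the vectors $\{\mba^k\}_{k=1}^n$ form an orthonormal basis of $\R^n$, so the matrix $B$ with entries $B_{ki} = a^k_i$ is orthogonal, hence $B^T B = I$ reads $\sum_{k=1}^n a^k_i a^k_j = \delta_{ij}$. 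Equivalently, $\sum_{k=1}^n \mba^k\otimes\mba^k = \sum_{k=1}^n P^k = I$, which is exactly the identity $\sum_{i=1}^n P^i = I$ already noted in the text just before \eqref{e:Qdef}; extracting the $(i,j)$ entry of both sides yields $\sum_k a^k_i a^k_j = \delta_{ij}$.

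There is no real obstacle here; the only subtlety worth flagging is that the orthonormality hypothesis $\mba^i\cdot\mba^j = \delta_{ij}$ is used in its ``dual'' form $\sum_k a^k_i a^k_j = \delta_{ij}$ rather than $\sum_j a^i_j a^k_j = \delta_{ik}$, and these coincide only because a matrix with orthonormal rows automatically has orthonormal columns. If one prefers to avoid invoking that fact, one can instead cite the already-derived identity $\sum_{k=1}^n P^k = I$ directly and read off the $(i,j)$ component, which is the cleanest route and keeps the proof to a single line beyond the trace computation.
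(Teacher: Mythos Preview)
Your proof is correct and essentially identical to the paper's: the paper also derives symmetry from Lemma~\ref{lem:permute}, computes $\tr Q_{ij} = \sum_\ell a^\ell_i a^\ell_j$ using $\tr P^\ell = 1$, and then invokes orthogonality of the matrix $(\mba^1\,|\,\cdots\,|\,\mba^n)$ to pass from orthonormal columns to orthonormal rows. Your alternative route via $\sum_k P^k = I$ is a nice shortcut the paper does not mention.
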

\begin{proof}

The symmetry of $Q_{ij}$ immediately follows from the previous lemma. To find the trace of $Q_{ij}$, first note 
\begin{align*}
    \tr \mathcal{Q}_{ij} & = \tr \LC \sum_{\ell = 1}^n a^\ell_i a^\ell_j P^\ell \RC \\
    & = \sum_{\ell = 1}^n a^\ell_i a^\ell_j \tr P^\ell \\
    & = \sum_{\ell =1}^n a^\ell_i a^\ell_j.
\end{align*}

Next, we note that $\mba^1,\ldots, \mba^n$ form an orthonormal basis which implies the corresponding matrix $(\mba^1 | \cdots | \mba^n)$ forms an orthogonal matrix.  Since the matrix is orthogonal, the row vectors of this matrix, $\mathbf{b}^k = (a^1_k, \ldots, a^n_k)$ are an orthonormal basis.  Therefore, 
\[
\delta_{ij} = \mathbf{b}^i \cdot \mathbf{b}^j = \sum_{\ell =1}^n a^\ell_i a^\ell_j,
\]
which completes the proof.
\end{proof}


Finally, we have the following, 
\begin{lem}\label{submatrices}
	Submatrices $\mathcal{Q}_{ij}$, $i,j=1\ldots,n$ of $\mathcal{Q}$ have the common eigenframe $\left\{\mathbf{a}^k\right\}_{k=1}^n$ and, therefore, commute. 
\end{lem}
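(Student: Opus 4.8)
The plan is to read off the common eigenframe directly from \eqref{e:Qsubdef} and then let the spectral theorem do the rest. First I would apply the submatrix $\mathcal{Q}_{ij}$ to an arbitrary frame vector $\mba^m$. Using $\mathcal{Q}_{ij}=\sum_{k=1}^n a^k_i a^k_j P^k$ together with $P^k\mba^m=(\mba^k\cdot\mba^m)\mba^k=\delta_{km}\mba^k$, the sum over $k$ collapses to a single term, giving
\[
\mathcal{Q}_{ij}\,\mba^m \;=\; \sum_{k=1}^n a^k_i a^k_j\,\delta_{km}\,\mba^k \;=\; a^m_i a^m_j\,\mba^m .
\]
Thus each $\mba^m$ is an eigenvector of $\mathcal{Q}_{ij}$, with eigenvalue $a^m_i a^m_j$, and since this holds for every $m=1,\dots,n$ and every index pair $i,j$, the orthonormal set $\left\{\mba^k\right\}_{k=1}^n$ is a simultaneous eigenbasis of all the submatrices. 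This is exactly the claim that they share the common eigenframe $\left\{\mba^k\right\}_{k=1}^n$.

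Next I would deduce commutativity from simultaneous diagonalizability. By Lemma~\ref{lem:traceQsub} each $\mathcal{Q}_{ij}$ is symmetric, and the computation above shows all of them are diagonalized by the single orthogonal matrix $O=(\mba^1\,|\,\cdots\,|\,\mba^n)$: namely $O^T\mathcal{Q}_{ij}O=\operatorname{diag}(a^1_i a^1_j,\dots,a^n_i a^n_j)$. Diagonal matrices commute with one another, and conjugation by the fixed matrix $O$ is an algebra isomorphism of $\Mbold^n$, so $\mathcal{Q}_{ij}\mathcal{Q}_{i'j'}=\mathcal{Q}_{i'j'}\mathcal{Q}_{ij}$ for all pairs of indices. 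This finishes the argument.

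I do not anticipate a real obstacle here; the only point that warrants a moment's care is that "sharing an eigenframe implies commuting" genuinely requires the matrices to be \emph{diagonalized} in that basis, not merely to admit the $\mba^k$ as some eigenvectors. This is guaranteed in the present situation because $\left\{\mba^k\right\}_{k=1}^n$ is an orthonormal basis of $\R^n$ and each $\mathcal{Q}_{ij}$ is symmetric, so the eigenvalue list $a^m_i a^m_j$ computed above exhausts the spectrum.
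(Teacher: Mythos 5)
Your proof is correct and follows essentially the same route as the paper: compute $\mathcal{Q}_{ij}\mathbf{a}^m=a^m_i a^m_j\,\mathbf{a}^m$ directly from \eqref{e:Qsubdef}, conclude $\{\mathbf{a}^k\}$ is a common eigenframe, and deduce commutativity. The paper states "the commutation property immediately follows" where you spell out the simultaneous-diagonalization argument, but this is just filling in the same step rather than a different approach.
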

\begin{proof}
	For any $i,j,l=1,\ldots,n$, we have
	\[\mathcal{Q}_{ij}{\mathbf a}^l= \sum_{k=1}^n a^k_i a^k_j P^k {\mathbf a}^l= \left(a^l_i a^l_j\right) {\mathbf a}^l.\]
	The commutation property of $\mathcal{Q}_{ij}$, $i,j=1\ldots,n$  immediately follows.
\end{proof}
We remind the reader of the following related result that we will use in a sequel.
\begin{lem}\label{lem:commute}
If $A$ and $B$ are any two symmetric matrices in $\Mbold^n$ that commute with each other, then $A$ and $B$ have a common eigenframe.
\end{lem}
\begin{proof}
Suppose $B \mathbf{v} = \lambda \mathbf{v}$, then $BA \mathbf{v} = AB \mathbf{v} = \lambda A\mathbf{v}$.  Therefore, $A \mathbf{v}$ is an eigenvector of $B$ associated with the eigenvalue $\lambda$ and $A:\mathrm{ker}(B - \lambda I)\to \mathrm{ker}(B - \lambda I)$. Because $A$ and $B$ are symmetric, both have associated bases of orthonormal  eigenvectors in $\R^n$ that we will denote by $\left\{{\bf a}_i\right\}$ and $\left\{{\bf b}_i\right\}$, respectively. Suppose that ${\bf v}\in\mathrm{ker}(B - \lambda I)$ and the equation $A{\bf x}={\bf v}$ has a solution. Then \[BA{\bf x}=AB{\bf x}=\sum_i x_iAB{\bf b}_i=\sum_i x_i\lambda_iA{\bf b}_i=\lambda{\bf v}\in\mathrm{ker}(B - \lambda I),\] where $\lambda_i$ is an eigenvalue of $B$ corresponding to ${\bf b}_i$. It follows that $x_i=0$ for all ${\bf b}_i\notin\mathrm{ker}(B - \lambda I)$ so that ${\bf x}\in\mathrm{ker}(B - \lambda I)$. From this, we conclude that the preimage of the set $\mathrm{ker}(B - \lambda I)$ under the map $A:\R^n\to \R^n$ is $\mathrm{ker}(B - \lambda I)$, hence $\mathrm{ker}(B - \lambda I)$ is spanned by eigenvectors of $A$.

\end{proof}

\begin{rem} \label{rem:numberqs}
We can now use Lemma \ref{lem:permute} to calculate the number of unique entries in $\mathcal{Q}$. With the help of \eqref{eq:Qterm} we can see that this number must be the same as the dimension of the space of polynomials of degree four in $n$ variables, or
\[\left(\begin{array}{c}
     n+3  \\
     n-1 
\end{array}\right)\]
Now, accounting for symmetry, there are $n(n+1)/2$ distinct $n\times n$ submatrices comprising $\mathcal{Q}$. It follows that Lemma \ref{lem:traceQsub} gives $n(n+1)/2$ additional linear constraints on the components of $\mathcal{Q}$. We conclude that the number of unique entries in $\mathcal{Q}$ is 
\[\left(\begin{array}{c}
     n+3  \\
     n-1 
\end{array}\right)-\frac{n(n+1)}{2}=\frac{n\left(n^2-1\right)(n+6)}{24}.\]
\end{rem}

{ 
\subsection{Cross-fields via zero sets of polynomials}
\label{sec:ZSP}

The construction we have just described can be summarized as follows. Within our framework the set of $n$-crosses is
\begin{equation}\label{eq:mfld_first_def}
\Mcross = \{{\mathcal Q}\in \Mbold^{n^2} \,\,\,\mbox{is a symmetric tensor}, \,\,\,  {\mathcal Q}^2 = {\mathcal Q},  \,\,\, {\rm tr}({\mathcal Q}_{ij}) = \delta_{ij}\},
\end{equation}
where \emph{symmetric tensor} refers to the property established in Lemma~\ref{lem:permute} or, equivalently, to invariance of the tensor under the permutation operators defined in the Appendix.  In Section~\ref{sec:GLn}, we show that there is a one-to-one correspondence between $n$-crosses and tensors in ${\mathcal Q}\in \Mcross$ so that, in particular, a unique $n$-cross can be recovered from a tensor ${\mathcal Q}\in \Mcross$. Further, from \eqref{eq:mfld_first_def} we have that  $\Mcross$ can be defined as the zero set of a finite family of polynomials in the Euclidean space $\Mbold^{n^2}$.  In this case the polynomials are either quadratic (arising from ${\mathcal Q}^2 = {\mathcal Q}$) or linear (arising from the symmetry or trace conditions in \eqref{eq:mfld_first_def}).  Therefore, by definition, $\Mcross$ is an algebraic variety.

We arrived at the set $\Mcross$ while searching for convenient subsets of the Euclidean space that faithfully represent the quotient space
$SO(n) / O_n$. Here $SO(n)$ denotes the set of orthogonal $n\times n$ matrices with real entries and the determinant equal $1$. $O_n$ is the finite subgroup of $SO(n)$ composed of the symmetries of the unit cube in $\mathbb{R}^n$; it is the classical octahedral group when $n=3$.  The quotient space $SO(n) / O_n$ captures the symmetries we require of an $n$-cross.  

The idea of defining $3$-crosses as elements of the quotient space $SO(3)/O_3$ and describing this space as a subset of a Euclidean space is not new. In \cite{Huang:2011:BAS:2070781.2024177, palmer2019algebraic,raysokolov} the elements of $SO(3)/O_3$ are represented by polynomials in three variables of the form
$$
p(x) = p_0(R^Tx)
$$
or, more precisely, by the restriction of these polynomials to the unit sphere $S^2\subset \mathbb{R}^3$.  Here $x \in \mathbb{R}^3$, the $3\times 3$ matrix $R$ is orthogonal, and $p_0$ is a specific homogeneous polynomial of degree $4$ in three variables.  The polynomial $p_0$ is chosen so that $p_0(Rx) = p_0(x)$ for all $x\in \mathbb{R}^3$ exactly when $R\in O_3$, i.e., $p_0$ is invariant under the action of the octahedral group $O_3$.  By projecting the above polynomials onto the set of harmonic, homogeneous polynomials of degree $4$ (also known as band $4$ spherical harmonics), the authors of \cite{Huang:2011:BAS:2070781.2024177, palmer2019algebraic,raysokolov} obtain a subset of $\mathbb{R}^9$ that is diffeomorphic to $\Mcross$.  This subset can also be expressed as the zero set of a family of quadratic polynomials, in this case in $\mathbb{R}^9$.  Finally in this construction, alignment of a $3$-cross field on the boundary of a domain is characterized in terms of coefficients of a polynomial with respect to a basis of spherical harmonics (cf. \cite{palmer2019algebraic}).

 In a related work \cite{Schaft}, the authors appeal to the equivalence between homogeneous polynomials of degree $4$ and $4^{th}$-order symmetric tensors to obtain a representation of 3-cross fields as fourth order symmetric tensors.  In particular, they show that fourth-order symmetric tensors that are projections satisfying some additional affine constraints encode a recoverable 3-cross fields.  This is the same framework we choose, except our motivation does not exploit the equivalence between homogeneous polynomials of degree $4$ and $4^{th}$-order symmetric tensors. While it was not discussed in \cite{Schaft}, alignment of a $3$-cross field on the boundary of a domain is characterized in our Proposition \ref{prop:being_part_of_frame} in terms of either commutators of matrices or their eigenvectors.

Now define a $3$-cross field on a domain $\Omega \subset \mathbb{R}^3$ as a map
$$
u : \Omega \to \Mcross.
$$
To such a map we can assign an energy, for example given by the Dirichlet integral
$$
E(u) = \frac{1}{2} \int_\Omega \abs{\nabla u}^2 dx.
$$
Then an optimal cross field can be defined as a minimizer of $E$ among maps $u : \Omega \to \Mcross$ that satisfy boundary conditions on $\partial\Omega$ enforcing a particular choice of boundary alignment.  We shall see in the next section that such maps necessarily have singularities even in simple geometries like a ball in $\mathbb{R}^3$. In some instances (cf. Fig.~\ref{fig0} and \ref{fig:bone}) this has the consequence that the Dirichlet integral $E(u)$ is infinite; this naturally leads to considering relaxation schemes for $E$ that spread these singularities.  In \cite{palmer2019algebraic} this by accomplished by adding the distance to $\Mcross$ as a penalty term to the energy $E$. The relaxed energy is actually never explicitly used in \cite{palmer2019algebraic} as they drive admissible maps toward an equilibrium using an MBO algorithm based on advancing the solution via heat flow and then projecting it onto the target manifold on each time step. In contrast, we propose a relaxation scheme (Section~\ref{sec:GLn}) that uses one of the polynomials that define $\Mcross$ as the penalty term. Our evolution algorithm is the gradient flow for the relaxed energy that proceeds along maps that are near but not necessarily in $\Mcross$. Whether or not our algorithm and that of \cite{palmer2019algebraic} evolve toward the same equilibrium is an interesting open question.


{ To further illuminate the differences in these relaxation schemes, we detail more of the approach in \cite{palmer2019algebraic}.
Let 
\begin{equation}
\label{eq:projsymtensor}
\mathbb{P}_{Sym} \mathcal{M}
= \frac{1}{|P_k|} \sum_{\sigma \in P_k} T_{\sigma}\mathcal{M}
\end{equation}
for a $k$-th order tensor, where $P_k$ set of permutations $\sigma$ of length $k$. This operator defines orthogonal projection onto the set of symmetric $k$-tensors. As a substitute to $\Mcross$, one can now define the larger {\em odeco variety} as 
\begin{equation} \label{eq:odecodef}
\mathbb{M}^n_{\text{odeco}} 
= \{ \mathcal{M} \in \text{Symmetric 4-tensors such that }  (\mathbb{I} - \mathbb{P}_{Sym} ) (\mathcal{M}^2) =  (\mathbb{I} - \mathbb{P}_{Sym} ) (\mathcal{M}^4) = 0 \},
\end{equation}
see Corollary~\ref{cor:ODECO} in the Appendix.
Given that $\Mcross\subset \mathbb{M}^n_{\text{odeco}}$, one can work with the odeco variety and its relaxations instead of $\Mcross$.  
One advantage of using odeco variety is that it is strictly larger than $\Mcross$, hence some of the singularities of cross fields may be avoided by working with the odeco variety instead.  To generate the orthogonal coordinate system, the authors run an MBO scheme with the projection step onto either odeco varieties or 3-cross fields. 

Consideration of more general cross fields ---such as odeco---represent future avenues of exploration, see Section~\ref{sec:discussion}.
}
}

\section{$n$-crosses conforming to the boundary of an $n$-dimensional domain}
 \label{sec:boundary}
 
In this section we discuss the proper way of prescribing an $n$-cross field on the boundary of an $n$-dimensional domain (or, more generally on an $n-1$-dimensional Lipschitz manifold). In particular, we will focus on describing what can be thought of as the natural boundary conditions for the Ginzburg-Landau variational problem that we will consider below. Here we require that the $n$-cross field at every point on the boundary contain a line that is parallel to the normal to the boundary.  This condition can be phrased in a few equivalent ways, which are presented in Proposition \ref{prop:being_part_of_frame}.  Finally, cross fields generate singularities on two dimensional boundaries, see for example \cite{fogg, palacios,Ray:2008:NDF:1356682.1356683}.  At the end of this section we provide a simple proof of this for cross fields on a ball in $\mathbb{R}^3$.  We also give examples of $3$-cross fields on a ball in $\mathbb{R}^3$ with what can be thought of as the simplest possible configurations of necessary singularities.

Let us start by recalling that we write ${\mathcal Q}\in \Mbold^{n^2}$ as
$$
{\mathcal Q} = \left ( \begin{array}{ccc} {\mathcal Q}_{11} & \cdots & {\mathcal Q}_{1n} \\ \vdots & \ddots & \vdots \\ {\mathcal Q}_{n1} & \cdots & {\mathcal Q}_{nn} \end{array}  \right ),
$$
{ where} each ${\mathcal Q}_{ij}\in \Mbold^{n}$.  By Theorem \ref{thm:limitrelax}, we know that each ${\mathcal Q}\in \Mcross$ as above, has an associated $n$-cross.  Let us recall here that this is the set of unordered rank one, orthogonal projections $P^k \in \Mtr^n$ defined by an orthonormal basis $\{ {\bf a}^k \}_{k=1}^n$, which in turn is determined by ${\mathcal Q}$ up to order.   In particular we have
$$
P^j = {\bf a}^j \otimes {\bf a}^j,
$$
and
\begin{equation}\label{basic_entry}
{\mathcal Q}_{ij} =  \sum_{k=1}^3 {{\bf a}}^k_i{{\bf a}}^k_jP^k.
\end{equation}
Let us also recall that the $n$-cross satisfies $P^jP^k = P^kP^j = \delta_{jk}P^k$ for all $j, k=1, ..., n$.  In particular, the $P^k$ commute with each other.

Our main boundary requirement will be $\nu(x)$, the normal at $x\in \partial \Omega$, be part of the frame associated to $\mathcal{Q}(x)$.  As this is an issue between a single projection matrix $P$ and a tensor $\mathcal Q$, we drop the dependence on $x$, and suppose for concreteness that $\mathbf{\nu} \in \mathbb{S}^{n-1}$ and that $P$ projects onto the subspace generated by $\mathbf{\nu}$.  It is easy to see from the discussion above that if $\mathcal{Q}\in \Mcross$ and $P$ is an element of the $n$-cross of $\mathcal{Q}$, then $\mathbf{\nu} = (\mathbf{\nu}_1, ..., \mathbf{\nu}_n)$ is an eigenvector of every $n\times n$-block $\mathcal{Q}_{ij}$ with eigenvalue ${\bf \nu}_i\mathbf{\nu}_j$.  In other words,
\begin{equation}
\label{eq:bc}
\mathcal{Q}_{ij} \mathbf{\nu}=\mathbf{\nu}_j\mathbf{\nu}_k \mathbf{\nu}
\end{equation}
for every $i,j=1,\ldots,n$ on $\partial \Omega$.  We shall see that this condition is in fact equivalent to the membership of $P$ to the $n$-cross of $\mathcal{Q}$, and to a third condition.  
\begin{proposition}\label{prop:being_part_of_frame}
Let ${\mathcal Q}\in \Mcross$, let $P$ be a fixed $n\times n$, rank 1, orthogonal projection, and let $\mathbf{\nu}\in \mathbb{S}^{n-1}$ be a unit vector in the image of $P$.  Finally, denote
$$
{\mathcal P} = \left ( \begin{array}{ccc} P_{11}P & \cdots & P_{1n}P \\ \vdots & \ddots & \vdots \\ P_{n1}P & \cdots & P_{nn}P \end{array}  \right ) = \left ( \begin{array}{ccc} \mathbf{\nu}_1\mathbf{\nu}_1P & \cdots & \mathbf{\nu}_1\mathbf{\nu}_n P \\ \vdots & \ddots & \vdots \\ \mathbf{\nu}_n\mathbf{\nu}_1P & \cdots & \mathbf{\nu}_n\mathbf{\nu}_nP \end{array}  \right ),
$$
where the $\mathbf{\nu}_i$ are the coordinates of $\mathbf{\nu}$.  The following are equivalent:
\begin{enumerate}
\item Either $\nu$ or $-\nu$ is part of the $n$-cross of $\mathcal Q$.
\item $[{\mathcal Q}, {\mathcal P}] = 0$.
\item $ {\mathcal Q}_{ij} \nu = \mathbf{\nu}_i\mathbf{\nu}_j\mathbf{\nu}$ for each $i, j = 1, ..., n$.
\end{enumerate}
\end{proposition}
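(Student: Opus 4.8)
The plan is to establish the cycle of implications $(1)\Rightarrow(3)\Rightarrow(2)\Rightarrow(1)$, since $(1)\Rightarrow(3)$ was essentially already observed in the discussion preceding the proposition and $(3)$ is the most concrete of the three conditions to work with. For $(1)\Rightarrow(3)$: if $\nu$ (or $-\nu$) is part of the $n$-cross of $\mathcal{Q}$, write the associated orthonormal frame as $\{{\bf a}^k\}_{k=1}^n$ with, say, ${\bf a}^1 = \pm\nu$, so that $P = {\bf a}^1\otimes{\bf a}^1$. Using \eqref{e:Qsubdef} and the fact that the $P^k$ project onto mutually orthogonal lines, apply $\mathcal{Q}_{ij}$ to $\nu$: all terms with $k\neq 1$ annihilate $\nu$, and the $k=1$ term contributes $a^1_i a^1_j P^1\nu = \nu_i\nu_j\nu$. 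This gives $(3)$.

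For $(3)\Rightarrow(2)$: I would compute the blocks of $[\mathcal{Q},\mathcal{P}]$ directly. The $(i,j)$ block of $\mathcal{Q}\mathcal{P}$ is $\sum_k \mathcal{Q}_{ik}(\nu_k\nu_j P)$, and since $P = \nu\otimes\nu$, acting with $\mathcal{Q}_{ik}$ on $\nu$ (inside $P = \nu\otimes\nu$) and invoking $(3)$ gives $\mathcal{Q}_{ik}\nu\otimes\nu = \nu_i\nu_k\,\nu\otimes\nu = \nu_i\nu_k P$; summing against $\nu_k\nu_j$ over $k$ and using $\sum_k\nu_k^2 = 1$ yields $\nu_i\nu_j P$ for the $(i,j)$ block. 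The same computation for $\mathcal{P}\mathcal{Q}$: the $(i,j)$ block is $\sum_k (\nu_i\nu_k P)\mathcal{Q}_{kj} = \nu_i\sum_k\nu_k (\nu\otimes\nu)\mathcal{Q}_{kj}$; here one uses that $\mathcal{Q}_{kj}$ is symmetric so $(\nu\otimes\nu)\mathcal{Q}_{kj} = \nu\otimes(\mathcal{Q}_{kj}\nu) = \nu\otimes(\nu_k\nu_j\nu) = \nu_k\nu_j P$, again giving $\nu_i\nu_j P$. Hence the blocks of $\mathcal{Q}\mathcal{P}$ and $\mathcal{P}\mathcal{Q}$ agree and $[\mathcal{Q},\mathcal{P}] = 0$.

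For $(2)\Rightarrow(1)$: this is where the real content lies. By Lemma~\ref{lem:traceQsub} and Lemma~\ref{submatrices}, the blocks $\mathcal{Q}_{ij}$ share a common eigenframe; moreover $\mathcal{Q}\in\Mcross$ means $\mathcal{Q}^2=\mathcal{Q}$ is a projection, and by Theorem~\ref{thm:limitrelax} it determines an $n$-cross with frame $\{{\bf a}^k\}$ so that \eqref{basic_entry} holds. From $[\mathcal{Q},\mathcal{P}]=0$ I want to deduce that $\nu$ must be one of the $\pm{\bf a}^k$. The idea is to use $\mathcal{P}$ as a projection-type object built from $\nu$ and extract information by pairing the commutator relation with tensor products of the frame vectors: applying the block identity for $\mathcal{Q}\mathcal{P}=\mathcal{P}\mathcal{Q}$ to a vector ${\bf a}^\ell$ and using $\mathcal{Q}_{ij}{\bf a}^\ell = a^\ell_i a^\ell_j {\bf a}^\ell$ (the eigenrelation from Lemma~\ref{submatrices}), one gets on one side a multiple of $(\nu\cdot{\bf a}^\ell)\,\nu$ and on the other a multiple of ${\bf a}^\ell$, forcing $(\nu\cdot{\bf a}^\ell)\big(\nu - ({\bf a}^\ell\cdot\nu){\bf a}^\ell\big)$-type cancellations. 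Tracking the scalar coefficients carefully should show that $\nu$ can have nonzero inner product with only one ${\bf a}^\ell$, hence $\nu = \pm{\bf a}^\ell$ for that index. The main obstacle I anticipate is bookkeeping: one must carry out this argument avoiding the degenerate case where some $a^\ell_i a^\ell_j$ vanish for all $i,j$ simultaneously — which cannot happen since ${\bf a}^\ell\neq 0$ — and ensure the scalar identities actually pin down $\nu$ rather than merely constraining it; choosing the right vector to test against (and possibly testing against the full tensored frame $\{{\bf a}^k\otimes{\bf a}^m\}$ viewed in $\Mbold^{n^2}$) will be the crux.
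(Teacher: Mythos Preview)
Your cycle $(1)\Rightarrow(3)\Rightarrow(2)\Rightarrow(1)$ is correct and differs from the paper's structure, which proves $(1)\Rightarrow(2)$, $(1)\Rightarrow(3)$, $(2)\Rightarrow(1)$, and $(3)\Rightarrow(1)$ separately. Your $(3)\Rightarrow(2)$ via direct block computation is clean and is not in the paper; the paper instead shows $(3)\Rightarrow(1)$ by noting that $(3)$ forces $[\mathcal{Q}_{ij},P]=0$, hence $\sum_k P^k_{ij}[P,P^k]=0$ for all $i,j$, and linear independence of the $P^k$ gives $[P,P^k]=0$. For $(2)\Rightarrow(1)$ the paper's argument is more structural than yours: it contracts the block identity $\sum_k (PP^k)_{ij}PP^k=\sum_k(P^kP)_{ij}P^kP$ against an arbitrary symmetric $A$ and then specializes $A=P^j$ to obtain $\langle P,P^j\rangle\,[P,P^j]=0$, from which the conclusion is immediate. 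Your vector-testing idea works too, but your description of the output is slightly off: when you apply the $(i,j)$ block of $\mathcal{Q}\mathcal{P}=\mathcal{P}\mathcal{Q}$ to ${\bf a}^\ell$, the $\mathcal{P}\mathcal{Q}$ side indeed gives $\nu_i a^\ell_j c_\ell^2\,\nu$ (with $c_k=\nu\cdot{\bf a}^k$), but the $\mathcal{Q}\mathcal{P}$ side gives $\nu_j c_\ell\sum_m c_m^2 a^m_i\,{\bf a}^m$, not a pure multiple of ${\bf a}^\ell$. Matching ${\bf a}^m$-components (for $c_\ell\neq 0$ and then $c_m\neq 0$) yields $c_m a^m_i\nu_j=c_\ell a^\ell_j\nu_i$, which for a fixed $j$ with $\nu_j\neq0$ forces ${\bf a}^m\parallel\nu$, so the bookkeeping does close; the paper's matrix-level contraction simply avoids this component chase.
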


\begin{proof}
Let us start by observing that (1) easily implies (2), and that (1) implies (3) by Lemma \ref{submatrices}.

We show now that (2) implies (1).  First, a direct multiplication of matrices shows that
$$
({\mathcal P}{\mathcal Q})_{ij} = \sum_{k=1}^n (PP^k)_{ij}PP^k.
$$
This shows that the condition  $[{\mathcal P}, {\mathcal Q}]=0$ implies
$$
\sum_{k=1}^n (PP^k)_{ij}PP^k = \sum_{k=1}^n (P^kP)_{ij}P^kP
$$
for every $i, j=1, ..., n$.  We now take any matrix $A\in \Mtr^n$ with entries $a_{ij}$, multiply the last identity by $a_{ij}$ and add in $i$, $j$ to obtain
$$
\sum_{k=1}^n \langle PP^k, A\rangle PP^k = \sum_{k=1}^n \langle P^kP, A\rangle P^kP.
$$
Since $A$, $P$ and $P^k$ are all symmetric, $\langle PP^k, A\rangle = \langle P^kP, A\rangle$, so we conclude that
$$
\sum_{k=1}^n \langle PP^k, A\rangle [P, P^k] = 0
$$
for every $A\in \Mtr^n$.  It is easy to see that
$$
\langle PP^k, P^j\rangle = \delta_{k, j}\langle P, P^k\rangle,
$$
so replacing $A$ by $P^j$ in the next to last equation we obtain
$$
0 = \langle P, P^j\rangle [P, P^j]
$$
for every $j=1, ..., n$.  Since the $P^j$ and $P$ are all orthogonal projections of rank $1$, it is easy to conclude from here that $P$ is indeed one of the $P^j$.
 
We show last that condition (3) also implies (1).  To do this we observe that clearly (3) implies that
$$
[{\mathcal Q}_{ij}, P] = 0
$$
for every $i, j=1, ..., n$, because ${\mathcal Q}\in \Mcross$. Since
$$
{\mathcal Q}_{ij} = \sum_{k=1}^n P_{ij}^k P^k,
$$
this implies that
$$
[P, {\mathcal Q}_{ij}] = \sum_{k=1}^n P_{ij}^k [P, P^k] = 0
$$
for every $i, j = 1, ..., n$.  This clearly implies that $[P, P^k]=0$ for $k=1, ..., n$, so again, $P$ is one of the $P^k$.
\end{proof}

Next, we record a simple relation between topologically trivial maps
${\mathcal Q}:\partial \Omega \to \Mcross$ that always contain $P_\mathbf{\nu}$ as part of their $n$-cross, and tangent vector fields on $\partial \Omega$.  

For this we first consider a map $\mathcal Q$ defined on $\partial \Omega \setminus V$, where $V\subset \partial \Omega$ is some finite subset of the boundary, possibly empty.  Denoting by $\pi_1(A)$ the fundamental group of $A$, by topologically trivial we mean that the image of $\pi_1(\Omega\setminus V)$ by the map induced by $u$ on fundamental groups, is the identity element of $\pi_1(\Mcross)$.  For this situation we have the
\begin{proposition}\label{prop:frame_and_tangents}
Let $n\geq 3$, $\Omega \subset \R^{n}$, and $V \subset \partial \Omega$ be a finite subset of isolated points, possibly empty.  For every smooth map ${\mathcal Q}:\partial \Omega \setminus V \to \Mcross$ that is topologically trivial in the sense described above, and that always contains $\mathbf{\nu}$ as part of its frame, there are $(n-1)$ smooth, unit, tangent vector fields
$$
{\bf \tau}_{\mathcal Q}^j:\partial \Omega \setminus V\to \mathbb{S}^{n-1}, j=1,..., n-1,
$$
that are also part of the $n$-frame of $\mathcal Q$, and that along with ${\bf \nu}$ form an orthonormal basis of $\R^{n}$.  Conversely, given $(n-1)$  such unit, tangent vector fields $\mathbf{\tau}^j :\Omega\setminus V\to \mathbb{S}^{n-1}$, $j=1, ..., n-1$, there is a map ${\mathcal Q}:\partial \Omega \setminus V\to \Mcross$ that has ${\bf \nu}$ in its $n$-cross, as well as the ${\bf \tau}_j$.
\end{proposition}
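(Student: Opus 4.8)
The plan is to prove the two implications separately: the forward one by a covering-space lifting argument, and the converse by the explicit tensor construction of Section~\ref{sec:ncross}.

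\emph{Forward direction.} First I would record that, by Theorem~\ref{thm:limitrelax} together with the construction of Section~\ref{sec:ncross}, $\Mcross$ is in bijection with the set of $n$-crosses and hence is diffeomorphic to $SO(n)/O_n$, where the quotient map $q:SO(n)\to SO(n)/O_n\cong\Mcross$ sends an orthonormal basis of $\R^n$ to the unordered $n$-cross of lines spanned by its columns. Since $O_n$ is finite and acts freely on $SO(n)$ by right multiplication, $q$ is a smooth covering map (in particular a local diffeomorphism, so any continuous lift of a smooth map through $q$ is automatically smooth). Assuming $\partial\Omega\setminus V$ connected (otherwise one argues componentwise; since $n\ge 3$, each component of $\partial\Omega\setminus V$ is a connected, locally path-connected Lipschitz manifold), the topological triviality hypothesis says $\mathcal{Q}_*\bigl(\pi_1(\partial\Omega\setminus V)\bigr)$ is the trivial subgroup of $\pi_1(\Mcross)$, so a fortiori it lies in $q_*\bigl(\pi_1(SO(n))\bigr)$. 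The lifting criterion for covering spaces then produces a smooth $\widetilde{\mathcal{Q}}:\partial\Omega\setminus V\to SO(n)$ with $q\circ\widetilde{\mathcal{Q}}=\mathcal{Q}$.

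Next I would extract the vector fields from the lift. Write the columns of $\widetilde{\mathcal{Q}}(x)$ as an orthonormal basis $\mathbf{b}^1(x),\dots,\mathbf{b}^n(x)$ of $\R^n$; by construction $\{\R\mathbf{b}^k(x)\}_{k=1}^n$ is exactly the $n$-cross of $\mathcal{Q}(x)$, which by hypothesis contains the normal line $\R\nu(x)$. Hence for each $x$ there is a unique index $j(x)$ with $\mathbf{b}^{j(x)}(x)=\pm\nu(x)$, and a short continuity argument — using that the $\mathbf{b}^k$ are smooth and that $|\mathbf{b}^{j(x)}(x)\cdot\nu(x)|=1$ while $|\mathbf{b}^k(x)\cdot\nu(x)|<1$ for $k\ne j(x)$ — shows $j(x)$ is locally constant, hence constant; after relabeling, $j\equiv n$. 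Then $\tau^k_{\mathcal Q}:=\mathbf{b}^k$ for $k=1,\dots,n-1$ are smooth unit vector fields, each orthogonal to $\mathbf{b}^n=\pm\nu$ and therefore tangent to $\partial\Omega$; the set $\{\tau^1_{\mathcal Q},\dots,\tau^{n-1}_{\mathcal Q},\nu\}$ is orthonormal, and each $\R\tau^k_{\mathcal Q}(x)$ is a line of the $n$-cross of $\mathcal{Q}(x)$, which is precisely the assertion. (The sign ambiguity in $\mathbf{b}^n=\pm\nu$ is harmless, since $\mathbf{b}^n$ is discarded.) For the converse, given pairwise orthogonal unit tangent vector fields $\tau^1,\dots,\tau^{n-1}$ on $\partial\Omega\setminus V$ (so that $\{\tau^1(x),\dots,\tau^{n-1}(x),\nu(x)\}$ is an orthonormal basis of $\R^n$ at every $x$), I would set $P^k(x):=\tau^k(x)\otimes\tau^k(x)$ for $k\le n-1$ and $P^n(x):=\nu(x)\otimes\nu(x)$, and define $\mathcal{Q}(x):=\sum_{k=1}^n P^k(x)\otimes P^k(x)$ as in \eqref{e:Qdef}. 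This is polynomial in the smooth fields $\tau^k,\nu$, hence smooth, and Lemmas~\ref{lem:ProjTen}, \ref{lem:permute} and \ref{lem:traceQsub} give $\mathcal{Q}(x)\in\Mcross$; by construction its $n$-cross is $\{\R\tau^1(x),\dots,\R\tau^{n-1}(x),\R\nu(x)\}$, so it contains $\nu$ and each $\tau_j$ in its frame.

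The main obstacle is the global step in the forward direction: locally one can always select and orient the $n-1$ tangent lines of $\mathcal{Q}$, but making this choice vary smoothly over all of $\partial\Omega\setminus V$ is exactly the lifting problem through the finite cover $SO(n)\to\Mcross$, and this is where the topological triviality hypothesis enters essentially. Everything else — smoothness of the lift, constancy of the distinguished index $j(x)$, and the tangency and orthonormality bookkeeping — is routine once the lift is in hand, and the converse requires no topology at all.
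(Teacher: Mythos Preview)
Your proposal is correct and follows essentially the same route as the paper: lift $\mathcal{Q}$ through the finite covering $SO(n)\to\Mcross$ using the topological-triviality hypothesis, observe that the column (or projection) picking out the normal line is locally constant and hence constant on the connected set $\partial\Omega\setminus V$ (here $n\ge3$ is used), and take the remaining columns as the tangent fields; the converse is the explicit tensor construction, which the paper dismisses as ``essentially trivial'' and you spell out. Your treatment is slightly more explicit about the lifting criterion and the smoothness of the lift, but there is no substantive difference in strategy.
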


\begin{proof}
The converse part of the proposition is essentially trivial so we concentrate on the direct implication.  We first recall that $SO(n)$ is a covering space for $\Mcross$, although not the universal cover of $\Mcross$.  Still, if $P^1_0$, ..., $P^n_0$ are the projections onto the spaces generated by each of the vectors of some fixed canonical basis, then
\begin{align*}
T:&SO(n)\to \Mcross \\
&R \to T(R) = \sum_{k=1}^n {\bf X}_{RP^k_0R^T}\otimes {\bf X}_{RP^k_0R^T}
\end{align*}
is a covering map.  Here we use notation of Section 8, in particular the isomorphism ${\bf X}:\Mall \to \R^{n^2}$ between the set of $\Mall$ of all $n\times n$ matrices and $\R^{n^2}$ defined in \ref{e:define_bold_X}.

The condition that ${\mathcal Q}:\partial \Omega \setminus V\to \Mcross$ be topologically trivial  is known to guarantee that $\mathcal Q$ lifts through
$$
R:\partial \Omega \setminus V \to SO(n).
$$
This means
$$
{\mathcal Q}(x) = T(R(x)) = \sum_{k=1}^n {\bf X}_{R(x)P^k_0R(x)^T}\otimes {\bf X}_{R(x)P^k_0R(x)^T}.
$$
Now we assume that $P_{\bf \nu}(x)$ is part of the $n$-cross of ${\mathcal Q}(x)$ at every $x\in \partial \Omega \setminus V$.  The same arguments we used in our previous lemma show that at every $x \in \partial \Omega \setminus V$, $P_{\bf \nu}(x)$ is one of the $R(x)P^k_0R(x)^T$.  Since $V$ is a finite set, and $n\geq 3$, $\partial \Omega \setminus V$ is connected.  This implies that $P_{\bf \nu}(x)$ is one of the $R(x)P^k_0R(x)^T$ with the same $k$ for every $x\in \partial \Omega \setminus V$.  Without loss of generality assume $k=1$.  Calling ${\bf e}^1_0$, ..., , ${\bf e}^n_0$ the canonical basis behind $P^1_0$, ..., $P^n_0$, clearly $R(x){\bf e}^2_0$, ..., $R(x){\bf e}^n_0$ are both smooth, unit, tangent vector fields on $\partial \Omega \setminus V$.
\end{proof}

The second aspect we will consider in this section stems from the fact that there are topological obstructions to the existence of smooth maps that satisfy the boundary conditions we describe here.  Because of this, in order to build boundary maps that satisfy our boundary conditions, one is forced to introduce singularities on the boundary.  We will give a simple criterion that allows us to build boundary maps with a finite number of point singularities.

Once we have the previous Proposition we can use some classical facts regarding tangent vector fields to draw conclusions relevant to our situation.  The first is the following consequence of the Poincare Hopf theorem:

\begin{corollary}\label{cor:no_smooth_maps}
Let $\Omega = B_R(0)$ be the ball of radius $R>0$ around the origin on $\R^3$.  There is no smooth map ${\mathcal Q}:\partial \Omega \to \Mcrossthree$ that contains either ${\bf \nu}(x)$ or $-{\bf \nu}(x)$ as part of its frame at every $x\in \partial \Omega$.
\end{corollary}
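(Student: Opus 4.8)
The plan is to argue by contradiction and reduce to the Hairy Ball Theorem through Proposition~\ref{prop:frame_and_tangents}. Suppose, for contradiction, that there is a smooth map $\mathcal{Q}:\partial\Omega = S^2 \to \Mcrossthree$ such that at every $x\in S^2$ either $\nu(x)$ or $-\nu(x)$ is part of the $3$-cross of $\mathcal{Q}(x)$. Since a $3$-cross is an unordered triple of lines in $\RP^2$ (equivalently, of orthogonal rank-one projections), the requirement that $\nu(x)$ \emph{or} $-\nu(x)$ belong to the associated frame is exactly the requirement that the line $\R\,\nu(x)$ be one of the three lines of the cross of $\mathcal{Q}(x)$. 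This is precisely the hypothesis imposed on $\mathcal{Q}$ in Proposition~\ref{prop:frame_and_tangents}, with $n=3$ and $V=\emptyset$.

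Next I would check the topological triviality hypothesis of that proposition. Because $\partial\Omega = S^2$ is simply connected and $V$ is empty, $\pi_1(\partial\Omega\setminus V)=\pi_1(S^2)$ is trivial, so the homomorphism induced by $\mathcal{Q}$ on fundamental groups is automatically the identity into $\pi_1(\Mcrossthree)$. Hence $\mathcal{Q}$ is topologically trivial in the sense used above, and Proposition~\ref{prop:frame_and_tangents} applies: it yields $n-1 = 2$ smooth unit tangent vector fields $\tau^1_{\mathcal{Q}},\tau^2_{\mathcal{Q}} : S^2 \to S^2$ defined on all of $\partial\Omega\setminus V = S^2$.

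Finally I would invoke the Poincar\'e--Hopf theorem (equivalently, the Hairy Ball Theorem): since $\chi(S^2) = 2 \neq 0$, there is no continuous, nowhere-vanishing tangent vector field on $S^2$. But $\tau^1_{\mathcal{Q}}$ is smooth and of unit length everywhere, hence nowhere vanishing, a contradiction. Therefore no such $\mathcal{Q}$ exists. The argument is short once Proposition~\ref{prop:frame_and_tangents} is in hand; the only points deserving a moment of care are the translation between the corollary's boundary condition and the ``framed-with-$\nu$'' hypothesis of the proposition (which is harmless because crosses are unoriented) and the observation that simple connectivity of $S^2$ makes the topological-triviality assumption automatic. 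I do not expect a genuine obstacle here, as the substantive work has already been carried out in Proposition~\ref{prop:frame_and_tangents}.
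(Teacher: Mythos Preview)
Your proof is correct and follows exactly the route the paper intends: the corollary is stated there as an immediate consequence of Proposition~\ref{prop:frame_and_tangents} together with the Poincar\'e--Hopf theorem, and your argument supplies precisely those details, including the observation that simple connectivity of $S^2$ makes the topological-triviality hypothesis automatic.
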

Another use of Proposition \ref{prop:frame_and_tangents} is the following:  the Poincare Hopf Theorem tells us not only that any tangent vector field to $\mathbb{S}^2$ must have zeros, but also that the sum of the degrees of the zeros of any tangent vector field to $\mathbb{S}^2$ must equal the Euler characteristic of the sphere.  For $\mathbb{S}^2$ (and also for $\mathbb{S}^n$, even $n$), the Euler characteristic is $2$.  The simplest possible combination of zeros and degrees under this constraint is one zero with degree two.

{We now give an example of a frame field in $\Omega = B_R(0)\subset \R^3$ that contain ${\bf \nu}$ in its frame at all but one point on $\partial \Omega$.}  Denote by ${\bf p}_s$ the south pole of $\partial B_R(0)$, pick ${\bf e} \in \mathbb{S}^2$ such that ${\bf p}_s\cdot {\bf e}=0$ and let
$$
\hat{\bf r}_s(x)=\frac{x-{\bf p}_s}{\abs{x-{\bf p}_s}}.
$$
With this define
\begin{equation}
\label{eq:a1}
{\bf a}^1(x) = R^{-1}({\bf p}_s - 2({\bf p}_s\cdot \hat{\bf r}_s(x)) \hat{\bf r}_s(x)),
\end{equation}
\begin{equation}
\label{eq:a2}
{\bf a}^2(x) = {\bf e} - 2({\bf e}\cdot \hat{\bf r}_s(x)) \hat{\bf r}_s(x),
\end{equation}
and
\begin{equation}
\label{eq:a3}
{\bf a}^3(x) = {\bf a}^1(x) \times {\bf a}^2(x).
\end{equation}
Direct computations show that this is an orthonormal frame at every $x\in \overline{B_R(0)}\setminus \{{\bf p}_s\}$.  Furthermore, whenever $x\in \partial B_R(0)\setminus \{{\bf p}_s\}$, we have both that ${\bf a}^1(x) = \frac{x}{\abs{x}} = {\bf \nu}(x)$, and that the vector field ${\bf a}^2$ is the image through the (inverse of the) stereographic projection from the south pole of the vector field that differentiates with respect to one of the coordinates on the complex plane. {In Fig. \ref{fig:ex1} we illustrate the distribution of the cross-field $\left\{{\bf a}^1,{\bf a}^2,{\bf a}^3\right\}$ in $B_R(0)$.}
\begin{figure}
\begin{center}
\includegraphics[scale=.6]{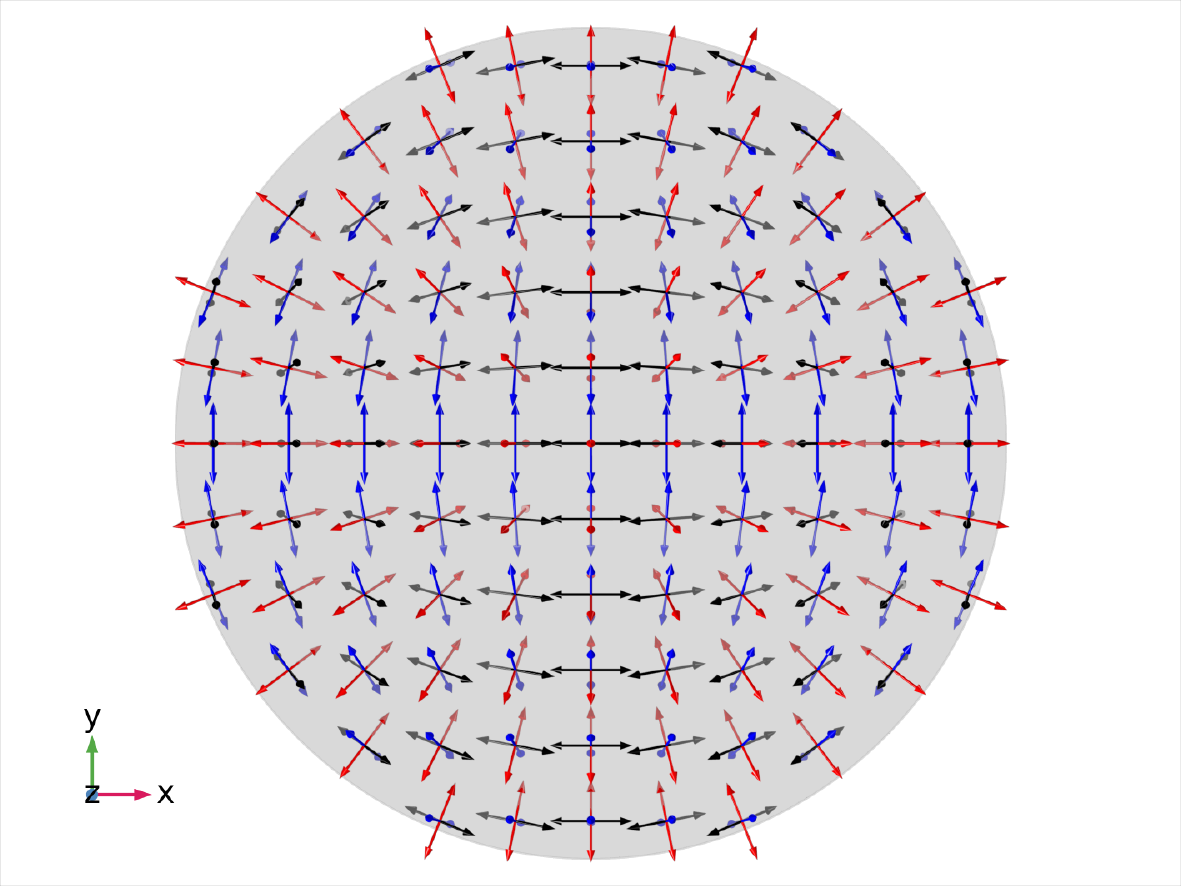} \qquad \includegraphics[scale=.6]{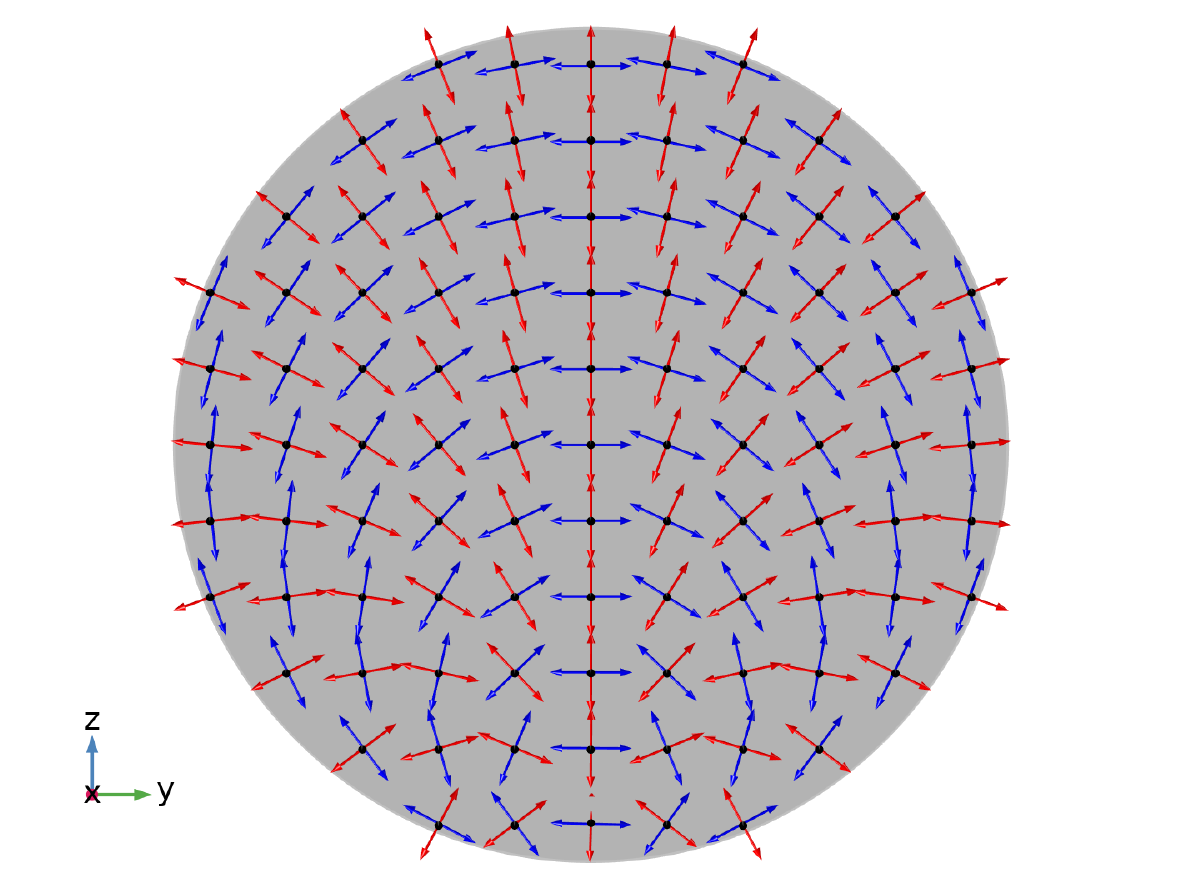} \\ \vspace{5mm} \includegraphics[scale=.7]{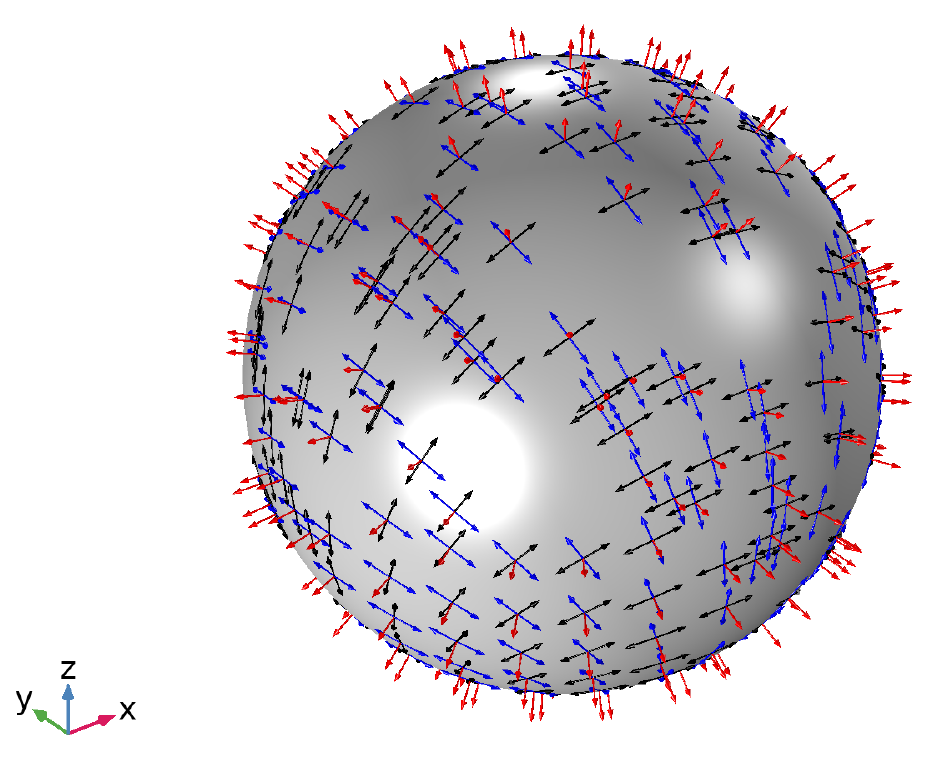}
\caption{{Cross-sections of the $3$-cross field \eqref{eq:a1}-\eqref{eq:a3} along $xy$- and $yz$-planes, respectively (top row). The $3$-cross field \eqref{eq:a1}-\eqref{eq:a3} on the surface of the sphere (bottom row). The singular point is located at the south pole of the sphere. The vectors ${\bf a}^1$, ${\bf a}^2$, and ${\bf a}^3$ are marked in different colors to aid visualization.}}\label{fig:ex1}
\end{center}
\end{figure}

{Another prototypical situation is that of a 3-cross field in an infinitely long cylinder that contains the normal ${\bf \nu}$ in its frame on the boundary. The next example shows that this cross field does not need to contain a singularity in the interior of the cylinder due to the so-called "escape" phenomenon, well-known in the literature on nematic liquid crystals. Indeed, consider $D_R(0)$---a cross-section of a circular cylinder of radius $R$ by a plane perpendicular to its axis and let $(r, \theta)$ be the standard set of polar coordinates in $\R^2$. Define $\alpha(r)= \frac{\pi r}{2R}$ and set
\begin{equation}
\label{eq:a1.1}
{\bf a}^1(x) =(\cos^2\theta\cos\alpha(r)+\sin^2\theta, \sin{\theta}\cos{\theta}(\cos\alpha(r)-1), -\cos{\theta}\sin\alpha(r)),
\end{equation}
\begin{equation}
\label{eq:a2.1}
{\bf a}^2(x) =(\sin{\theta}\cos{\theta}(\cos\alpha(r)-1),\sin^2\theta\cos\alpha(r)+\cos^2\theta, -\sin{\theta}\sin\alpha(r)),
\end{equation}
and
\begin{equation}
\label{eq:a3.1}
{\bf a}^3(x) =(\cos{\theta}\sin\alpha(r), \sin{\theta}\sin\alpha(r),\cos{\alpha(r)}).
\end{equation}
It is easy to check that this is indeed an orthonormal frame at every $x\in D_R(0)$, that ${\bf a}^1(x) = {\bf \nu}(x) = \frac{x}{\abs{x}}$ when $x\in \partial D_R(0)$,  and that this field is smooth in the interior of $D_R(0)$. The corresponding frame field is depicted in Fig. \ref{fig:ex2}.}
\begin{figure}
\begin{center}
\includegraphics[scale=.7]{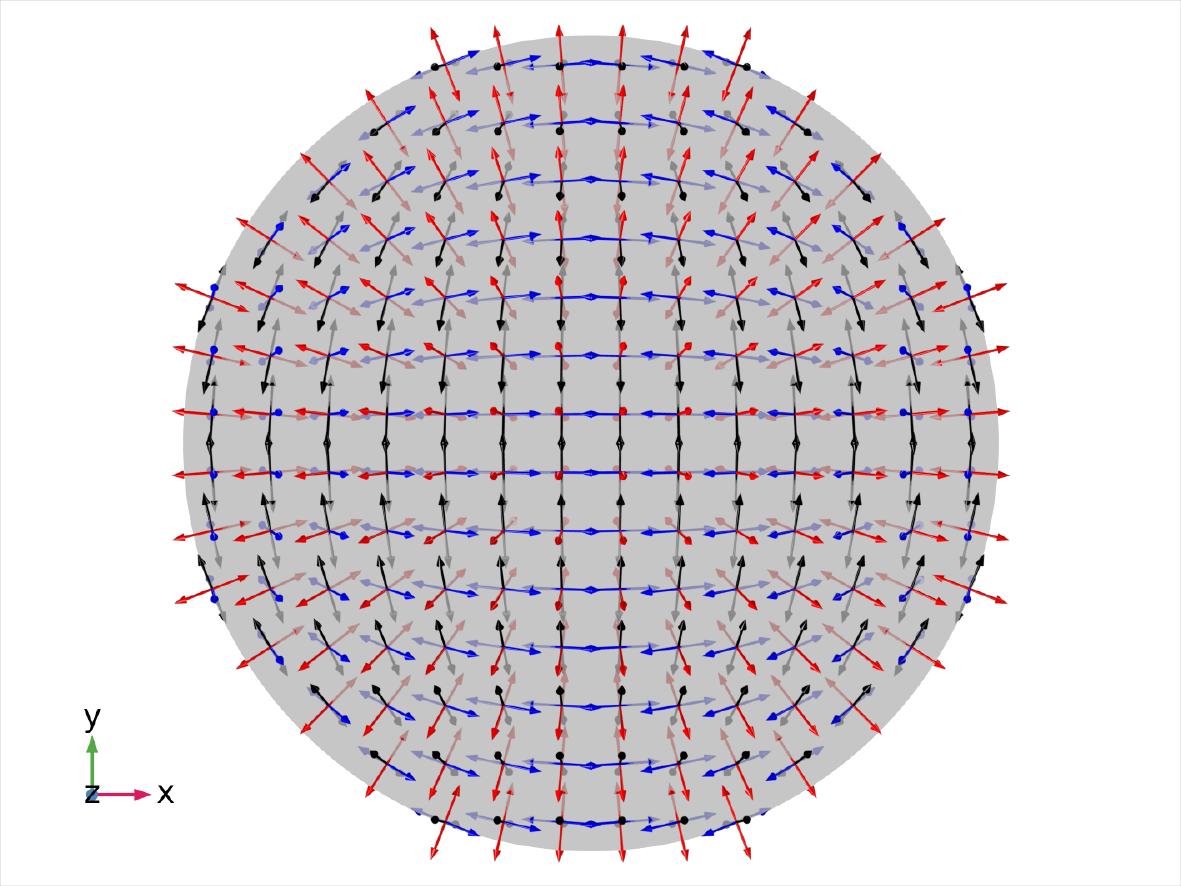}
\caption{{The "escaped" $3$-cross field \eqref{eq:a1.1}-\eqref{eq:a3.1} in the cylinder. The vectors ${\bf a}^1$, ${\bf a}^2$, and ${\bf a}^3$ are marked in different colors to aid visualization. The vector field ${\bf a}^3$ normal to the boundary, avoids singularity by escaping into the third dimension, i.e., reorienting along the $z$-axis at the center of the cross-section}}\label{fig:ex2}
\end{center}
\end{figure}

\begin{rem}
We remark that neither of these examples of the frame fields have interior singularities.  Further, for both  examples, the energies we consider in \eqref{eq:GLenergy} and \eqref{eq:GLenergyWA} have finite values independent of $\varepsilon$.  More precisely, the energies have finite contributions from the respective gradient terms and zero contributions from the potential and the penalty on the boundary.
\end{rem}

\section{Ginzburg-Landau relaxation and recovery of the $n$-cross field}
\label{sec:GLn}

We first define our ambient manifold and then define the relaxation procedure to the $n$-cross. Our relaxation will start from the set of symmetric tensors with certain trace conditions on its submatrices.   This is a similar definition to one found in \cite{Schaft}:
\begin{definition}
Set
\begin{equation} \label{eq:mrelaxdef}
\Mrelax = \{{\mathcal Q} \in \Mbold^{n^2}:  \mathcal Q_{ijk\ell} = \mathcal{Q}_{\sigma(ijk\ell)} \,\,\,\,\mbox{for all}\,\,\,\,\sigma \in S_4, \,\,\,\, {\rm tr}({\mathcal Q}_{ij}) = \delta_{ij} \}.
\end{equation}
We also describe the subset of elements of this space that are projections:
\begin{equation} \label{eq:mframedef}
\Mcross = \{{\mathcal Q} \in \Mrelax: {\mathcal Q}^2 = {\mathcal Q}\}.
\end{equation}
\end{definition}

Our main result in this section is the following theorem which shows that elements of $\Mcross$ are in fact $n$-cross fields.  
\begin{theorem} \label{thm:limitrelax}
For every ${\mathcal Q}\in \Mcross$ there are $n$ rank-1, orthogonal projection matrices with pairwise perpendicular images $P^1, ..., P^n \in \Mtr^{n}$ such that
$$
{\mathcal Q} = \sum_{j=1}^n P^j \otimes P^j.
$$
In other words, for every ${\mathcal Q}\in\Mcross$ there are matrices $P^1, ..., P^n \in \Mtr^n$ such that
\[
I = \mathop{\sum}\limits_{j=1}^n P^j
\]
where
\[
(P^j)^2 = (P^j)^T = P^j, \, {\rm tr}(P^j) = 1, \, 
P^j P^k = P^k P^j = \delta_{k,j}P^j,
\]
for all $j, k = 1, ...$, and
\[
{\mathcal Q} = \sum_{j=1}^n P^j \otimes P^j.
\]
\end{theorem}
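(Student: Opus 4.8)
The plan is to study $\mathcal{Q}$ as a linear operator on the space $\Mbold^{n}$ of symmetric matrices. Since $\mathcal{Q}\in\Mbold^{n^{2}}$ is a symmetric matrix with $\mathcal{Q}^{2}=\mathcal{Q}$, it is an orthogonal projection, and the symmetry of the tensor $\mathcal{Q}$ (in particular $\mathcal{Q}_{ijrs}=\mathcal{Q}_{jirs}=\mathcal{Q}_{ijsr}$) shows that, under the identification of $\R^{n^{2}}$ with $n\times n$ matrices used above, $\mathcal{Q}$ annihilates every antisymmetric matrix and carries $\Mbold^{n}$ into itself. Hence $\mathcal{Q}$ restricts to the orthogonal projection of $\Mbold^{n}$ onto a subspace $W$, namely its image. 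The trace condition $\tr\mathcal{Q}_{ij}=\delta_{ij}$ gives on one hand $\dim W=\tr\mathcal{Q}=\sum_{i}\tr\mathcal{Q}_{ii}=n$, and on the other hand $(\mathcal{Q}I)_{ij}=\sum_{r}\mathcal{Q}_{ijrr}=\tr\mathcal{Q}_{ij}=\delta_{ij}$, i.e.\ $\mathcal{Q}(I)=I$, so $I\in W$. Fixing any orthonormal basis $E^{1},\dots,E^{n}$ of $W$ (with respect to $\langle\cdot,\cdot\rangle$), we may then write $\mathcal{Q}_{ijrs}=\sum_{\alpha=1}^{n}E^{\alpha}_{ij}E^{\alpha}_{rs}$, equivalently $\mathcal{Q}(M)=\sum_{\alpha}\langle E^{\alpha},M\rangle E^{\alpha}$ for $M\in\Mbold^{n}$.

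Next I would extract the content of the one remaining tensor symmetry, $\mathcal{Q}_{ijrs}=\mathcal{Q}_{irjs}$. Inserting the expansion of $\mathcal{Q}$, contracting both sides against an arbitrary $M\in\Mbold^{n}$ in the indices $r,s$, and using that each $E^{\alpha}$ is symmetric, this becomes the operator identity
\[
\sum_{\alpha=1}^{n}E^{\alpha}ME^{\alpha}\;=\;\sum_{\alpha=1}^{n}\langle E^{\alpha},M\rangle\,E^{\alpha}\;=\;\mathcal{Q}(M)\qquad(M\in\Mbold^{n}).
\]
Two specializations will do the work: $M=I$ together with $\mathcal{Q}(I)=I$ gives $\sum_{\alpha}(E^{\alpha})^{2}=I$, and $M=E^{\beta}$ together with $\mathcal{Q}(E^{\beta})=E^{\beta}$ (valid because $E^{\beta}\in W$) gives $\sum_{\alpha}E^{\alpha}E^{\beta}E^{\alpha}=E^{\beta}$ for each $\beta$.

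The crux — the step I expect to be the real obstacle — is to deduce from these two relations that the matrices $E^{1},\dots,E^{n}$ commute with one another. I would argue by the norm identity $|[A,B]|^{2}=2\tr(A^{2}B^{2})-2\tr(ABAB)$, valid for symmetric $A,B$ since $[A,B]^{T}=-[A,B]$. Summing it over $A=E^{\beta}$, $B=E^{\gamma}$ and using the two displayed identities,
\begin{align*}
\sum_{\beta,\gamma=1}^{n}\bigl|[E^{\beta},E^{\gamma}]\bigr|^{2}
&=2\,\tr\Bigl(\Bigl(\sum_{\beta}(E^{\beta})^{2}\Bigr)^{2}\Bigr)-2\sum_{\gamma}\tr\Bigl(\Bigl(\sum_{\beta}E^{\beta}E^{\gamma}E^{\beta}\Bigr)E^{\gamma}\Bigr)\\
&=2\,\tr(I)-2\sum_{\gamma}\tr\bigl((E^{\gamma})^{2}\bigr)=2n-2n=0 .
\end{align*}
Hence $[E^{\beta},E^{\gamma}]=0$ for all $\beta,\gamma$: the $E^{\alpha}$ are mutually commuting symmetric matrices, so by iterating Lemma~\ref{lem:commute} they are simultaneously diagonalizable in some orthonormal frame $\{\mathbf{a}^{k}\}_{k=1}^{n}$ of $\R^{n}$.

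To finish, $W$ lies inside the $n$-dimensional space of matrices diagonal in the frame $\{\mathbf{a}^{k}\}$, and $\dim W=n$, so $W$ equals it; in particular each rank-one projection $P^{k}:=\mathbf{a}^{k}\otimes\mathbf{a}^{k}\in\Mtr^{n}$ lies in $W$, and these satisfy $(P^{k})^{2}=(P^{k})^{T}=P^{k}$, $\tr P^{k}=1$, $P^{j}P^{k}=\delta_{jk}P^{k}$ and $\sum_{k}P^{k}=I$. Moreover $\{P^{1},\dots,P^{n}\}$ is itself an orthonormal basis of $W$ because $\langle P^{j},P^{k}\rangle=\delta_{jk}$, and since the tensor $\sum_{\alpha}E^{\alpha}\otimes E^{\alpha}$ representing an orthogonal projection is independent of the chosen orthonormal basis of its image, we conclude $\mathcal{Q}=\sum_{k=1}^{n}P^{k}\otimes P^{k}$, which is the assertion. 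Apart from the commutator identity I do not anticipate real difficulties; the only thing to keep straight is the double role of $\R^{n}$ as the carrier of both the "block" indices and the "entry" indices of $\mathcal{Q}$, which the basis-independence of the final representation makes harmless.
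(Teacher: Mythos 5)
Your proof is correct, and while its overall architecture mirrors the paper's (spectral decomposition of $\mathcal{Q}$ as an operator on matrices, then commutativity of the symmetric eigenmatrices, then simultaneous diagonalization), the route to commutativity is genuinely different and simpler. The paper expands $\mathcal{Q}^2$ and $\mathcal{Q}^4$ using the $T_3$ permutation operator, equates two expressions for $\mathcal{Q}^4_L(A)$, tests against antisymmetric $A$, and then extracts $[Q^j,Q^k]=0$ from a positive-semidefinite-operator argument; this machinery is chosen deliberately so that the same proof also covers the odeco variety (Corollary~8.3), where $\mathcal{Q}$ is not assumed to be a projection. You instead exploit $\mathcal{Q}^2=\mathcal{Q}$ from the outset: the image $W\subset\Mbold^n$ is $n$-dimensional and contains $I$, the single remaining symmetry $\mathcal{Q}_{ijrs}=\mathcal{Q}_{irjs}$ yields the identity $\sum_\alpha E^\alpha M E^\alpha = \mathcal{Q}(M)$, and the two specializations $\sum_\alpha (E^\alpha)^2=I$ and $\sum_\alpha E^\alpha E^\beta E^\alpha = E^\beta$ feed the clean scalar computation
\[
\sum_{\beta,\gamma}\bigl|[E^\beta,E^\gamma]\bigr|^2 = 2\tr\!\Bigl(\Bigl(\textstyle\sum_\beta (E^\beta)^2\Bigr)^2\Bigr) - 2\sum_\gamma\tr\bigl((E^\gamma)^2\bigr) = 2n - 2n = 0,
\]
which kills all commutators at once. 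This is tighter and avoids $\mathcal{Q}^4$ entirely. The trade-off is that your argument does not extend to the odeco setting: with eigenvalues $\lambda_\alpha\neq 1$ the coefficients $\lambda_\beta\lambda_\gamma$ in front of $|[E^\beta,E^\gamma]|^2$ need not be nonnegative and the total sum need not vanish, so the positivity trick fails. For Theorem~\ref{thm:limitrelax} itself, though, your argument is a nice streamlining, and all the auxiliary steps (the dimension count $\dim W = \tr\mathcal{Q}=n$, $\mathcal{Q}(I)=I$ from the trace constraint, annihilation of antisymmetric matrices from $\mathcal{Q}_{ijrs}=\mathcal{Q}_{ijsr}$, and basis-independence of $\sum_\alpha E^\alpha\otimes E^\alpha$) check out.
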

The proof of Theorem~\ref{thm:limitrelax} is found in the appendix. 

{ 
An immediate consequence of 
Theorem~\ref{thm:limitrelax} is the following simple method for recovery of the orthogonal coordinate system from
the symmetric tensor, $\mathcal{Q}$ via the eigenframe of submatrices.

\begin{corollary}[$n$-Cross Recovery]
The $n$-cross can be recovered from a $\mathcal{Q} \in \Mcross$ by computing the eigenframe of any submatrix $\mathcal{Q}_{ij}$.
\end{corollary}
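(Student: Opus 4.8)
The plan is to reduce the statement to the structure theorem together with the eigenvector identity already recorded in Lemma~\ref{submatrices}. Given $\mathcal Q\in\Mcross$, I would first invoke Theorem~\ref{thm:limitrelax} to produce an orthonormal frame $\{\mathbf a^k\}_{k=1}^n$ with $P^k=\mathbf a^k\otimes\mathbf a^k$ and $\mathcal Q=\sum_{k=1}^n P^k\otimes P^k$; by construction the $n$-cross represented by $\mathcal Q$ is the unordered set of lines $\{[\mathbf a^1],\dots,[\mathbf a^n]\}$, so the content of the corollary is that these lines can be read off from a single block. Writing the block as in \eqref{basic_entry}, $\mathcal Q_{ij}=\sum_{k=1}^n a^k_i a^k_j\,P^k$ is a real symmetric $n\times n$ matrix, and the computation in the proof of Lemma~\ref{submatrices} gives $\mathcal Q_{ij}\mathbf a^l=(a^l_i a^l_j)\,\mathbf a^l$ for every $l$. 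Hence $\{\mathbf a^1,\dots,\mathbf a^n\}$ is an orthonormal eigenbasis of $\mathcal Q_{ij}$, with the eigenvalue of $\mathbf a^l$ equal to $a^l_i a^l_j$.

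Second, I would argue that this eigenframe determines the $n$-cross. When $\mathcal Q_{ij}$ has simple spectrum its orthonormal eigenbasis is unique up to sign and reordering, so any computed eigenbasis $\{\mathbf v^1,\dots,\mathbf v^n\}$ satisfies $[\mathbf v^m]=[\mathbf a^{\pi(m)}]$ for some permutation $\pi$, and the unordered set $\{[\mathbf v^1],\dots,[\mathbf v^n]\}$ is exactly the $n$-cross; this is the generic situation and all that is needed for the numerical recovery step. To handle possible repeated eigenvalues (for instance $\mathcal Q_{ii}$ when several $|a^k_i|$ coincide, so a single block need not resolve the cross), I would observe that by Lemma~\ref{submatrices} the blocks $\{\mathcal Q_{ij}\}_{i,j=1}^n$ commute, hence are simultaneously orthogonally diagonalizable, and that the joint eigenvalue assignment $l\mapsto (a^l_i a^l_j)_{i,j}$ is injective: equality for $k\ne l$ would force $\mathbf a^k\otimes\mathbf a^k=\mathbf a^l\otimes\mathbf a^l$, i.e. $P^k=P^l$, contradicting that the $[\mathbf a^k]$ are $n$ distinct (pairwise orthogonal) lines. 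Therefore every common eigenspace of the family is one-dimensional, the simultaneous eigenframe is unique up to sign and order, and intersecting the eigenspace decompositions of the blocks (in practice, of any two blocks with no shared degeneracy) recovers the lines $[\mathbf a^k]$ and hence the $n$-cross.

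The main obstacle is exactly the degeneracy bookkeeping in the second paragraph: fixing the correct interpretation of ``the eigenframe of a submatrix'' when that submatrix has a repeated eigenvalue, and certifying that although an individual $\mathcal Q_{ij}$ may fail to separate all the lines, the full commuting family $\{\mathcal Q_{ij}\}$ always does. Everything else is immediate from Theorem~\ref{thm:limitrelax} and the identity $\mathcal Q_{ij}\mathbf a^l=(a^l_i a^l_j)\mathbf a^l$ established in Lemma~\ref{submatrices}, so no new computation is required.
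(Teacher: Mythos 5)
Your proof is essentially the same route as the paper's: both rest on Theorem~\ref{thm:limitrelax} to produce the projections $P^k$, on the identity $\mathcal Q_{ij}\mathbf a^l=(a^l_i a^l_j)\mathbf a^l$ from Lemma~\ref{submatrices}, and on the commuting-family observation (the paper invokes Lemma~\ref{lem:commute}). Where you differ is that you push the argument one step further and notice that the corollary, read literally as ``compute the eigenframe of \emph{a single} block $\mathcal Q_{ij}$,'' can fail: if $\mathcal Q_{ij}$ has a repeated eigenvalue its orthonormal eigenbasis is not unique, so one block need not resolve the cross. This is a real issue -- e.g.\ when $\mathcal Q$ is built from the standard basis, $\mathcal Q_{11}=P^1$ has a two-dimensional kernel -- and the paper's two-sentence proof (``the submatrices have identical eigenframes'') glosses over it, since an individual degenerate block has many eigenframes, only one of which is the common one. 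Your fix -- that the full commuting family $\{\mathcal Q_{ij}\}$ is simultaneously diagonalizable, that the map $l\mapsto(a^l_ia^l_j)_{i,j}$ is injective because equality would force $P^k=P^l$, and hence the joint eigenspaces are one-dimensional -- is exactly what is needed to make the recovery well-posed. So your proof is correct, takes the paper's approach, and is strictly more careful than the paper's own argument; the only cosmetic remark is that your parenthetical ``any two blocks with no shared degeneracy'' is a heuristic rather than a proved claim, but you do not rely on it since you fall back on the whole family.
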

\begin{proof}
Since $\mathcal{Q}_{ij} = \sum_k P_{ij}^k P^k$ for $n\times n$ matrices $\{P^k\}_{k=1}^n$ which commute with each other. Therefore, by Lemma~\ref{lem:commute}  submatrices $\mathcal{Q}_{ij}$ have identical eigenframes for every $i,j \in \{1,\ldots,n\}$.
\end{proof}

\begin{rem}
 In the appendix, we also demonstrate that Theorem~\ref{thm:limitrelax} can be adapted to show that $\mathbb{M}^n_{\text{odeco}}$ can be identified by the constraint in \eqref{eq:odecodef}. 
\end{rem}
}

\subsection{Ginzburg-Landau relaxation off an $n$-cross field}

We will take elements of $\Mrelax$ and consider relaxations towards $\Mcross$ via two different Ginzburg-Landau approximations. As discussed in the introduction, we will relax our symmetric tensors $\mathcal{Q}$ by penalizing the potential 
\begin{equation}
\label{eq:WQ}
W(Q)=|\mathcal{Q}^2 - \mathcal{Q}|^2.
\end{equation}
In the bulk, this can be achieved by the energy, 
\begin{equation} \label{eq:earlyenergy}
\mathcal{E}(\mathcal{Q}) \equiv \frac{1}{2}\int_{\Omega} \LV \nabla \mathcal{Q} \RV^2 + {\frac{1}{\e^2}} \LV \mathcal{Q}^2 - \mathcal{Q} \RV^2 dx.
\end{equation}
By Theorem~\ref{thm:limitrelax} critical points of \eqref{eq:earlyenergy} will converge a.e. to a $n$-cross field as $\e \to 0$.  Following the discussion in Section~\ref{sec:boundary}, boundary singularities for $n$-cross fields are generically possible. To handle these scenarios we relax the condition on the boundary that the tensors are $n$-crosses; we handle this relaxation in two ways.
In the first case we impose boundary condition \eqref{eq:bc} as a hard constraint, and in the second case we  penalize our tensor for not aligning with the normal to the boundary.   

Let
\begin{align*}
    H^1_{\Mbold} & \equiv 
    \left\{ 
        \mathcal{A} \in H^1(\Omega; \Mrelax) \right\} \\
            H^1_{\Mbold, \nu} & \equiv 
    \left\{ 
        \mathcal{A} \in H^1(\Omega; \Mrelax) \hbox{ such that } \mathcal{A} \hbox{ satisfies }\eqref{eq:bc} \hbox{ on } \p \Omega \right\} 
\end{align*}
then the two Ginzburg-Landau relaxations are:

\noindent {\textbf{Method A}} Define the following energy
\begin{equation} \label{eq:GLenergy}
    \mathcal{E}(\mathcal{Q}) \equiv \frac{1}{2}\int_{\Omega} \LV \nabla \mathcal{Q} \RV^2 + \frac{1}{\e^2} \LV \mathcal{Q}^2 - \mathcal{Q} \RV^2 dx
    + \frac{1}{2 \delta_\e^2} \int_{\p \Omega} \LV \mathcal{Q}^2 - \mathcal{Q} \RV^2 ds,
\end{equation}
for $\mathcal{Q} \in H^1_{\Mbold,\nu}$.  In particular, we look for minimizers 
subject to the tensor constraints in Lemmas~\ref{lem:permute} and \ref{lem:traceQsub} 
and subject to boundary condition \eqref{eq:bc} with $\e\ll 1$ and $\delta_\e \ll 1$.  The nonlinear boundary relaxation allows for the formation of boundary vortices.  

\noindent {\textbf{Method B}} 
A weak anchoring version of the Ginzburg-Landau relaxation can be similarly defined, see for example \cite{Kleman, Newton} in the context of liquid crystals and \cite{BAUMAN2019447} in the context of Ginzburg-Landau theory. The corresponding weak anchoring version of our energy is 
\begin{equation} \label{eq:GLenergyWA}
    \mathcal{E}_{wa}(\mathcal{Q})
    \equiv 
    \frac{1}{2}\int_\Omega \LV \nabla \mathcal{Q} \RV^2
    + \frac{1}{\e^2}\LV \mathcal{Q}^2 - \mathcal{Q} \RV^2 dx 
    + \frac{1}{2 \delta_\e^2} \int_{\p \Omega} \LV \mathcal{Q}^2 - \mathcal{Q} \RV^2 ds
    + \frac{\lambda_\e}{2}\int_{\p \Omega}\left| \left[ \mathcal{Q}, \mathcal{P} \right] \right|^2 ds
\end{equation}
for $\mathcal{Q} \in H^1_{\Mbold}$ where $\mathcal{P} = P\otimes P$ with $P= \nu \otimes \nu$ for $\nu$ normal to the boundary. In this case, we take $\e \ll 1$, $\delta_\e \ll 1 $, and $\lambda_\e \gg 1$.

A natural numerical approach to generating an approximate $n$-cross field is to set up a constrained gradient descent of either \eqref{eq:GLenergy} with data in $H^1_{\mathbb{M}}$ or \eqref{eq:GLenergyWA} with data in $H^1(\Omega;\Mrelax)$ and choose $t \gg 1$. 

{All simulations in this paper are conducted using Method A. We expect that the results for Method B would produce similar outcomes as long as the parameter $\lambda_\e$ is sufficiently large; if $\lambda_\e$ is small, the effect of this would be analogous to what is known in the standard Ginzburg-Landau theory \cite{ABG}, where singular sets are observed to migrate from the interior of the domain to the boundary. The investigation of the shape of the resulting boundary singularities is beyond the scope of the present work.}
 
 \subsection{Removing constraints and the associated gradient descent}
 A simpler approach avoids dealing with the set of constraints that define our class of symmetric tensors.  
We first let $\mathbf{q}=(q_1,q_2,\ldots,q_k)^T$  be those components of $\mathcal{Q}$'s in $\Mrelax$ that are independent.  For example, $\mathcal{Q}_{1111} = q_1$, $\mathcal{Q}_{1112} = \mathcal{Q}_{1121} = q_2$, and so on.  Remark~\ref{rem:numberqs} shows that $k=2$ for $2$-frames and $k=9$ for $3$-frames.  
We can then redefine our Ginzburg-Landau energies as 
\begin{align} \label{eq:GLenergyq1}
    \mathcal{E}(\mathbf{q}) &\equiv \frac{1}{2}\int_{\Omega} \LV \nabla \mathcal{Q}(\mathbf{q}) \RV^2 + \frac{1}{\e^2} \LV \mathcal{Q}^2(\mathbf{q}) - \mathcal{Q}(\mathbf{q}) \RV^2 dx 
    + \frac{1}{2 \delta_\e^2} \int_{\p \Omega} \LV \mathcal{Q}^2(\mathbf{q}) - \mathcal{Q}(\mathbf{q}) \RV^2 ds \\
    \label{eq:GLenergyq2}
    \mathcal{E}_{wa}(\mathbf{q}) &\equiv \frac{1}{2}\int_{\Omega} \LV \nabla \mathcal{Q}(\mathbf{q}) \RV^2 + \frac{1}{\e^2} \LV \mathcal{Q}^2(\mathbf{q}) - \mathcal{Q}(\mathbf{q}) \RV^2 dx \\
    \nonumber
    & \qquad \qquad + \frac{1}{2 \delta_\e^2} \int_{\p \Omega} \LV \mathcal{Q}^2(\mathbf{q}) - \mathcal{Q}(\mathbf{q}) \RV^2 ds + \frac{\lambda_\e}{2}\int_{\p \Omega}\left| \left[ \mathcal{Q}(\mathbf{q}), \mathcal{P} \right] \right|^2 ds.
\end{align}
Our implementation follows the (unconstrained) gradient descent of \eqref{eq:GLenergyq1}, 
\begin{equation} \label{eq:gradientdescent}
    \p_t \mathbf{q} = - \nabla_{L^2} \mathcal{E}(\mathcal{Q}(\mathbf{q})), 
\end{equation}
subject to the appropriate natural boundary conditions.  

Since the focus of the current work is a practical algorithm for generating $n$-cross fields on Lipschitz domains, the analysis of these problems will be the subject of a follow-up paper.  




\section{$2$-cross fields}
\label{sec:2D}
We now apply our theory in two dimensions.  One particularly nice feature of the problem in this case is that the boundary conditions are Dirichlet conditions since the normal vector fully defines a $2$-cross field.  { After implementing the arguments from Section~\ref{sec:GLn} for $n=2$, we recover a Ginzburg-Landau relaxation for degree-${\frac14}$ vortices, as proposed and implemented in  earlier work, see \cite{BEAUFORT2017219,ViertelOsting}.}
\begin{lem} \label{lem:2DQdef}
Any $\mathcal{Q} \in \Mtworelax$ takes the form
\begin{equation} \label{e:2DQdef}
\mathcal{Q}= \begin{pmatrix}
\begin{pmatrix}
q_1 & q_2 \\ q_2 & 1- q_1 
\end{pmatrix}
& 
\begin{pmatrix}
q_2 & 1- q_1 \\ 1 - q_1 & - q_2 
\end{pmatrix} \\
\begin{pmatrix}
q_2 & 1- q_1 \\ 1 - q_1 & - q_2 
\end{pmatrix} & 
\begin{pmatrix}
1- q_1 & - q_2 \\ - q_2 & q_1 
\end{pmatrix}
\end{pmatrix}
\end{equation}
for $q_1,q_2 \in \R$.
\end{lem}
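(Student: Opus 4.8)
\emph{Proof proposal.} The plan is to use the two defining properties of $\Mtworelax$ --- full $S_4$-invariance of the entries $\mathcal{Q}_{ijrs}$ and the normalization $\tr \mathcal{Q}_{ij} = \delta_{ij}$ of the $2\times 2$ submatrices --- to pin down every entry of $\mathcal{Q}$ in terms of two free scalars. First I would note that an element $\mathcal{Q}\in\Mbold^4$ whose entries are invariant under arbitrary permutations of the four indices $(i,j,r,s)\in\{1,2\}^4$ is completely determined by its values on non-decreasing multi-indices, i.e. by the five numbers
\[
t_0 = \mathcal{Q}_{1111},\quad t_1 = \mathcal{Q}_{1112},\quad t_2 = \mathcal{Q}_{1122},\quad t_3 = \mathcal{Q}_{1222},\quad t_4 = \mathcal{Q}_{2222}
\]
(this is the $n=2$ case of the count in Remark~\ref{rem:numberqs}: five monomials of degree four in two variables).

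Next I would write out the four $2\times 2$ blocks using the identification $(\mathcal{Q}_{ij})_{rs} = \mathcal{Q}_{ijrs}$ and the symmetry (sorting each multi-index):
\[
\mathcal{Q}_{11} = \begin{pmatrix} t_0 & t_1 \\ t_1 & t_2 \end{pmatrix},\qquad
\mathcal{Q}_{12} = \mathcal{Q}_{21} = \begin{pmatrix} t_1 & t_2 \\ t_2 & t_3 \end{pmatrix},\qquad
\mathcal{Q}_{22} = \begin{pmatrix} t_2 & t_3 \\ t_3 & t_4 \end{pmatrix},
\]
where, for instance, $\mathcal{Q}_{1212} = \mathcal{Q}_{1122} = t_2$ and $\mathcal{Q}_{12rs} = \mathcal{Q}_{21rs}$ both come from permuting slots. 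Imposing $\tr\mathcal{Q}_{ij} = \delta_{ij}$ now gives $t_0 + t_2 = 1$, $t_2 + t_4 = 1$, and $t_1 + t_3 = 0$ (the off-diagonal condition appearing once from $\mathcal{Q}_{12}$ and once, redundantly, from $\mathcal{Q}_{21}$). Hence $t_2 = 1-t_0$, $t_4 = t_0$, $t_3 = -t_1$; setting $q_1 := t_0$ and $q_2 := t_1$ produces exactly the block form \eqref{e:2DQdef}. Finally I would verify the converse: for any $q_1,q_2\in\R$ the tensor defined by \eqref{e:2DQdef} has all entries depending only on the sorted multi-index (hence is $S_4$-symmetric) and its submatrices have the prescribed traces, so it lies in $\Mtworelax$.

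There is no genuine obstacle here; the only point requiring care is the index-to-block dictionary --- in particular checking that it is the \emph{full} $S_4$-symmetry, not merely symmetry within the pair $(i,j)$ or within $(r,s)$, that forces the repeated entries such as $\mathcal{Q}_{1212} = \mathcal{Q}_{1122}$ and therefore the specific off-diagonal block in \eqref{e:2DQdef}. Once this dictionary is written out explicitly, the lemma is immediate.
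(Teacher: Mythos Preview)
Your proof is correct and follows essentially the same approach as the paper: both use the full $S_4$-symmetry of the indices together with the trace conditions $\tr\mathcal{Q}_{ij}=\delta_{ij}$ to reduce the entries to two free parameters. Your presentation is slightly more systematic (first parametrizing by the five sorted multi-indices and then imposing the three trace constraints), but the content is identical to the paper's block-by-block argument.
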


\begin{proof}
   We use Lemmas~\ref{lem:permute} and \ref{lem:traceQsub} to identify submatrix $\mathcal{Q}_{11}$. 
   These lemmas can be used for the other submatices.  For $\mathcal{Q}_{12} = \mathcal{Q}_{21}$, we use $\mathcal{Q}_{1211}  = \mathcal{Q}_{1112}$ and $\mathcal{Q}_{1212}  = \mathcal{Q}_{1122}$, along with 
	 $\tr{\mathcal{Q}_{12}}=0$, to identify the submatrix entries.  Finally, we use
	    $\mathcal{Q}_{2211} = \mathcal{Q}_{1122}$
	    $\mathcal{Q}_{2212}  = \mathcal{Q}_{1222}$
	and $\tr{\mathcal{Q}_{22}} = 1$ to identify the last submatrix.
	
\end{proof}

Let $\Omega$ be a domain in $\R^2$ with Lipschitz boundary.  For $\mathcal{Q} \in \Mtworelax$ we define the associated Ginzburg-Landau energy as
\begin{equation}
    \mathcal{E}_\e(\mathcal{Q}(\mathbf{q})) \equiv \frac{1}{2} \int_\Omega  \LV \nabla \mathcal{Q}(\mathbf{q}) \RV^2 
    + \frac{1}{\e^2} \LV \mathcal{Q}(\mathbf{q}) - \mathcal{Q}^2(\mathbf{q}) \RV^2 dx
\end{equation}
with  $\mq = (q_1,q_2)$.
Since 
\[
 \frac{1}{\e^2} \LV \mathcal{Q}(\mathbf{q}) - \mathcal{Q}^2(\mathbf{q}) \RV^2
 = \frac{16}{\e^2} \LC \LC q_1 - \frac{3}{4} \RC^2 + q_2^2 - \frac{1}{16} \RC^2,
\]
the  energy becomes
\begin{equation} \label{eq:2dGLenergyfinal}
    \mathcal{E}(\mathcal{Q}(\mathbf{q}))
    = 4 \int_\Omega \LV \nabla \mathbf{q} \RV^2 
    + \frac{8}{\e^2} \LC \LC q_1 - \frac{3}{4} \RC^2 + q_2^2 - \frac{1}{16} \RC^2 dx.
\end{equation}

Using \eqref{eq:GLenergyq1} and \eqref{eq:2dGLenergyfinal}, we arrive at the parabolic system:
\begin{align*}
    \p_t q_1 - \Delta q_1 & = \frac{16}{\e^2} 
\LC  q_1 - \frac{3}{4} \RC  \LC \LC q_1 - \frac{3}{4}\RC^2 + q_2^2 - \frac{1}{16} \RC \\
\p_t q_2 - \Delta q_2 & =\frac{16}{\e^2} q_2 \LC \LC q_1 - \frac{3}{4}\RC^2 + q_2^2 - \frac{1}{16} \RC.
\end{align*}
We supplement this parabolic system with the following Dirichlet boundary conditions,  
\begin{lem}
For $\mathcal{Q}$ satisfying \eqref{eq:bc} then \begin{equation} \label{eq:2Dbc}
    \begin{pmatrix}
    q_1 \\ q_2
    \end{pmatrix}
    = \begin{pmatrix}
    \nu_1^4 + \nu_2^4 \\ \nu_1^3 \nu_2 - \nu_1 \nu_2^3
    \end{pmatrix}
\end{equation}
on the boundary.  If $\nu = (\cos(\theta(x)), \sin(\theta(x)))^T$ then
\begin{equation}\label{eq:2Dbcangle}
    \begin{pmatrix}
    q_1 \\ q_2
    \end{pmatrix}
    =  \frac{1}{4} \begin{pmatrix}
     3 + \cos(4 \theta(x))  \\ \sin(4 \theta(x)) 
    \end{pmatrix}.
\end{equation}
\end{lem}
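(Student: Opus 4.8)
The plan is to combine the explicit parametrization of $\mathcal{Q}\in\Mtworelax$ furnished by Lemma~\ref{lem:2DQdef} with the boundary condition \eqref{eq:bc}. Reading \eqref{eq:bc} as condition (3) of Proposition~\ref{prop:being_part_of_frame}, namely $\mathcal{Q}_{ij}\nu=\nu_i\nu_j\nu$ for all $i,j$, I would note that a single block of this relation, $\mathcal{Q}_{11}\nu=\nu_1^2\nu$, already determines $q_1$ and $q_2$, since Lemma~\ref{lem:2DQdef} shows the whole tensor depends only on $(q_1,q_2)$.

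First I would write $\mathcal{Q}_{11}\nu=\nu_1^2\nu$ componentwise using the form of $\mathcal{Q}_{11}$ in \eqref{e:2DQdef}. The first component gives $q_1\nu_1+q_2\nu_2=\nu_1^3$, and the second component, after using $\nu_1^2-1=-\nu_2^2$, gives $q_2\nu_1-q_1\nu_2=-\nu_2^3$. This is a $2\times2$ linear system for $(q_1,q_2)$ whose coefficient matrix $\left(\begin{smallmatrix}\nu_1 & \nu_2 \\ -\nu_2 & \nu_1\end{smallmatrix}\right)$ is invertible because $\nu_1^2+\nu_2^2=1$. Solving it — multiply the first equation by $\nu_1$ and subtract $\nu_2$ times the second, then multiply the first by $\nu_2$ and add $\nu_1$ times the second — produces $q_1=\nu_1^4+\nu_2^4$ and $q_2=\nu_1^3\nu_2-\nu_1\nu_2^3$, which is \eqref{eq:2Dbc}. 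Alternatively, since in $\R^2$ the orthogonal complement of the line spanned by $\nu$ is the line spanned by $(-\nu_2,\nu_1)$, any $\mathcal{Q}$ satisfying the boundary condition has $2$-cross equal to $\{\nu,(-\nu_2,\nu_1)\}$, so one reads off $q_1=\mathcal{Q}_{1111}=\nu_1^4+(-\nu_2)^4$ and $q_2=\mathcal{Q}_{1112}=\nu_1^3\nu_2+(-\nu_2)^3\nu_1$ directly from \eqref{e:Qelementsdef}; the sign ambiguities $\nu\mapsto-\nu$ and $(-\nu_2,\nu_1)\mapsto(\nu_2,-\nu_1)$ are harmless since every monomial appearing is even in each frame vector.

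For the angle form \eqref{eq:2Dbcangle}, I would substitute $\nu=(\cos\theta,\sin\theta)$ into \eqref{eq:2Dbc} and simplify with elementary identities: $\cos^4\theta+\sin^4\theta=1-2\cos^2\theta\sin^2\theta=1-\tfrac12\sin^2 2\theta=\tfrac34+\tfrac14\cos 4\theta$ for $q_1$, and $\cos^3\theta\sin\theta-\cos\theta\sin^3\theta=\cos\theta\sin\theta(\cos^2\theta-\sin^2\theta)=\tfrac12\sin 2\theta\cos 2\theta=\tfrac14\sin 4\theta$ for $q_2$. This yields \eqref{eq:2Dbcangle} and finishes the proof.

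There is essentially no obstacle here; the only things to be careful about are the evident index typo in \eqref{eq:bc} (it should read $\mathcal{Q}_{ij}\nu=\nu_i\nu_j\nu$, as in Proposition~\ref{prop:being_part_of_frame}(3)) and the structural observation that in dimension two the boundary normal determines the entire $2$-cross, so that no tangential data is left free and the boundary map is genuinely a Dirichlet datum.
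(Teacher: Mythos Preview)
Your proof is correct and follows essentially the same approach as the paper: both reduce \eqref{eq:bc} to the single block relation $\mathcal{Q}_{11}\nu=\nu_1^2\nu$, obtain the same $2\times 2$ linear system with coefficient matrix $\left(\begin{smallmatrix}\nu_1 & \nu_2\\ -\nu_2 & \nu_1\end{smallmatrix}\right)$, and solve it to get \eqref{eq:2Dbc}, then treat \eqref{eq:2Dbcangle} as a direct trigonometric calculation. You simply spell out the inversion and the double-angle identities that the paper leaves implicit, and you add a useful alternative observation (reading $q_1,q_2$ directly from \eqref{e:Qelementsdef} via the determined $2$-cross $\{\nu,(-\nu_2,\nu_1)\}$) that the paper does not mention.
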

\begin{proof}
From $\mathcal{Q}_{11} \nu = \nu_{1}^2 \nu$ and $\nu_1^2 + \nu_2^2 = 1$ then
\begin{align*}
  \begin{pmatrix}
        \nu_1 & \nu_2 \\
        - \nu_2 & \nu_1
    \end{pmatrix}
    \begin{pmatrix}
        q_1 \\ q_2
    \end{pmatrix}
    =  \begin{pmatrix}
        \nu^3_1 \\ -\nu^3_2
    \end{pmatrix}.     
\end{align*}
\eqref{eq:2Dbc} follows.  Equation \eqref{eq:2Dbcangle} is a direct calculation.
\end{proof}
The form of \eqref{eq:2Dbcangle} points to the generic formation of degree-$\frac{1}{4}$ vortices in two dimensions.  This has been pointed out and studied { in \cite{BEAUFORT2017219, Knoeppel,ViertelOsting}.}  

\begin{rem}
If $\mba^1= (\cos(\theta), \sin(\theta))$ 
then a quick calculations show that the corresponding tensor satisfies
\[
e_\eta = 
\frac{1}{4} \left(
\begin{array}{cc}
 \left(
\begin{array}{cc}
 3+\cos (4 \theta) & \sin (4 \theta) \\
 \sin (4 \theta) & 1- \cos (4 \theta) \\
\end{array}
\right) & \left(
\begin{array}{cc}
 \sin (4 \theta) & 1-\cos (4 \theta) \\
 1- \cos (4 \theta) & -\sin (4 \theta) \\
\end{array}
\right) \\
 \left(
\begin{array}{cc}
 \sin (4 \theta) & 1- \cos (4 \theta) \\
 1- \cos (4 \theta) & -\sin (4 \theta) \\
\end{array}
\right) & \left(
\begin{array}{cc}
 1- \cos (4 \theta) & - \sin (4 \theta) \\
 -\sin (4 \theta) & 3 + \cos (4 \theta) \\
\end{array}
\right) \\
\end{array}
\right).
\]
Indeed the tensor satisfies symmetries in \eqref{e:2DQdef}. Furthermore, the $4\theta$ in each argument implies a fundamental domain of $[0,{\pi/2})$ which corresponds to the symmetry group structure.
\end{rem}


\section{$3$-cross fields}
\label{sec:3D}
We now turn to our primary objective - a practical algorithm for generating $3$-crosses in Lipschitz domains.  As in two dimensions, we first identify the higher-order $\mathcal{Q}$ tensor.

\begin{lem} \label{lem:q3d}
Any $\mathcal{Q} \in \Mthreerelax$ takes the form
\begin{align*}
    \mathcal{Q} = 
    \begin{pmatrix}
        \mathcal{Q}_{11} & \mathcal{Q}_{12} & \mathcal{Q}_{13}\\
        \mathcal{Q}_{12} & \mathcal{Q}_{22} & \mathcal{Q}_{23}\\
        \mathcal{Q}_{13} & \mathcal{Q}_{23} & \mathcal{Q}_{33}
    \end{pmatrix}
    \end{align*}
where
\begin{align*}
    \mathcal{Q}_{11} & = 
    \begin{pmatrix}
        q_1 & q_2 & q_3 \\
        q_2 & q_4 & q_5 \\
        q_3 & q_5 & 1 - q_1 - q_4
    \end{pmatrix} \\
    \mathcal{Q}_{12} & = 
    \begin{pmatrix}
        q_2 & q_4 & q_5 \\
        q_4 & q_6 & q_7 \\
        q_5 & q_7 &  - q_2 - q_6
    \end{pmatrix} \\
    \mathcal{Q}_{13} & = 
    \begin{pmatrix}
        q_3 & q_5 & 1 - q_1 - q_4 \\
        q_5 & q_7 & - q_2 - q_6 \\
        1 - q_1 - q_4 & - q_2 - q_6 &  - q_3 - q_7
    \end{pmatrix} \\
    \mathcal{Q}_{22} & = 
    \begin{pmatrix}
        q_4 & q_6 & q_7 \\
        q_6 & q_8 & q_9 \\
        q_7 & q_9 & 1 - q_4 - q_8
    \end{pmatrix} \\
    \mathcal{Q}_{23} & = 
    \begin{pmatrix}
        q_5 & q_7 & -q_2-q_6 \\
        q_7 & q_9 & 1-q_4-q_8 \\
        -q_2-q_6 & 1-q_4-q_8 & - q_5 - q_9
    \end{pmatrix} \\
    \mathcal{Q}_{33} & = 
    \begin{pmatrix}
        1-q_1-q_4 & -q_2-q_6 & -q_3-q_7 \\
        -q_2-q_6 & 1-q_4-q_8 & - q_5 - q_9 \\
        -q_3-q_7 & - q_5 - q_9 & q_1 + 2q_4+q_8-1
    \end{pmatrix} 
\end{align*}
\end{lem}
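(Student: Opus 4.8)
The plan is to run a direct bookkeeping argument that parallels the $n=2$ case of Lemma~\ref{lem:2DQdef}: I would count the independent scalar components, name nine of them, and then solve a small triangular linear system for the remaining six.

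First I would use Lemma~\ref{lem:permute} (together with the permutation operators discussed in the Appendix) to note that membership in $\Mthreerelax$ forces $\mathcal{Q}_{ijrs}$ to be invariant under the full symmetric group $S_4$ acting on its four indices, so that $\mathcal{Q}_{ijrs}$ depends only on the multiset $\{i,j,r,s\}\subset\{1,2,3\}$. There are exactly $\binom{6}{2}=15$ such multisets (equivalently, $15$ monomials of degree four in three variables, consistent with Remark~\ref{rem:numberqs}). This single observation already encodes every internal relation visible in the statement: the transposition $r\leftrightarrow s$ makes each block $\mathcal{Q}_{ij}$ a symmetric $3\times 3$ matrix; the transposition $(i,j)\leftrightarrow(r,s)$ makes the block layout symmetric, $\mathcal{Q}_{ij}=\mathcal{Q}_{ji}$; and the remaining permutations identify, for instance, the $(r,s)$ entry of $\mathcal{Q}_{ij}$ with the $(j,s)$ entry of $\mathcal{Q}_{ir}$, so that each of the $81$ scalar entries of $\mathcal{Q}$ equals one of these $15$ values.

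Next I would single out as free parameters the nine values for which at most one index equals $3$, namely
\[
q_1=\mathcal{Q}_{1111},\ q_2=\mathcal{Q}_{1112},\ q_3=\mathcal{Q}_{1113},\ q_4=\mathcal{Q}_{1122},\ q_5=\mathcal{Q}_{1123},\ q_6=\mathcal{Q}_{1222},\ q_7=\mathcal{Q}_{1223},\ q_8=\mathcal{Q}_{2222},\ q_9=\mathcal{Q}_{2223},
\]
leaving the six values $\mathcal{Q}_{1133},\mathcal{Q}_{1233},\mathcal{Q}_{1333},\mathcal{Q}_{2233},\mathcal{Q}_{2333},\mathcal{Q}_{3333}$ (those with two or more indices equal to $3$) to be pinned down by the $n(n+1)/2=6$ trace conditions $\tr\mathcal{Q}_{ij}=\delta_{ij}$. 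Writing each condition as $\mathcal{Q}_{ij11}+\mathcal{Q}_{ij22}+\mathcal{Q}_{ij33}=\delta_{ij}$ and ordering the pairs as $(11),(12),(13),(22),(23),(33)$, I would observe that the system is triangular: $(11)$ solves for $\mathcal{Q}_{1133}$, then $(12)$ for $\mathcal{Q}_{1233}$, $(13)$ for $\mathcal{Q}_{1333}$, $(22)$ for $\mathcal{Q}_{2233}$, $(23)$ for $\mathcal{Q}_{2333}$, and $(33)$ for $\mathcal{Q}_{3333}$ (using the already-found $\mathcal{Q}_{1133},\mathcal{Q}_{2233}$). This both reconfirms the count $15-6=9$ and yields the closed forms $\mathcal{Q}_{1133}=1-q_1-q_4$, $\mathcal{Q}_{1233}=-q_2-q_6$, $\mathcal{Q}_{1333}=-q_3-q_7$, $\mathcal{Q}_{2233}=1-q_4-q_8$, $\mathcal{Q}_{2333}=-q_5-q_9$, and $\mathcal{Q}_{3333}=q_1+2q_4+q_8-1$.

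Finally I would substitute these nine parameters and six derived values back into the block layout via the overlap relations from the first step to recover precisely the displayed matrices $\mathcal{Q}_{11},\ldots,\mathcal{Q}_{33}$; the converse is immediate, since any $q_1,\ldots,q_9\in\R$ produce through these formulas an $S_4$-symmetric tensor satisfying all six trace conditions, hence an element of $\Mthreerelax$. The only delicate point — and the step I expect to be the main source of error, though not of conceptual difficulty — is the routine bookkeeping of which scalar $\mathcal{Q}_{ijrs}$ occupies which slot of which block, together with the verification that the six trace equations are genuinely independent, both of which are resolved by the triangular structure above.
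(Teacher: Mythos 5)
Your argument is correct and uses the same two lemmas (the $S_4$-symmetry of Lemma~\ref{lem:permute} and the trace constraints of Lemma~\ref{lem:traceQsub}) as the paper's proof; the only difference is presentational, in that you first organize the $81$ entries into the $15$ multiset classes and then solve the six trace equations as an explicit triangular system, whereas the paper works block by block and lists the specific index identifications it invokes for each $\mathcal{Q}_{ij}$. Your bookkeeping (choice of the nine free parameters, the six derived entries, and the resulting block forms) all checks out.
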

\begin{proof}
We  use Lemmas~\ref{lem:traceQsub} and \ref{lem:permute} to identify all submatrices. $\mathcal{Q}_{11}$ follows from symmetry and the trace-one condition.  Next for $\mathcal{Q}_{12}$ we use $\mathcal{Q}_{1212} = \mathcal{Q}_{1122}$ and $\mathcal{Q}_{1213} = \mathcal{Q}_{1123}$, 
along with the trace-free condition.
For $\mathcal{Q}_{13}$ we use $\mathcal{Q}_{1311} = \mathcal{Q}_{1113}$, $\mathcal{Q}_{1312} = \mathcal{Q}_{1123}$, $\mathcal{Q}_{1322} = \mathcal{Q}_{1223}$, and $\mathcal{Q}_{1323} = \mathcal{Q}_{1233}$ with
 the trace-free condition.
For $\mathcal{Q}_{22}$ we use $\mathcal{Q}_{2211} = \mathcal{Q}_{1122}$, $\mathcal{Q}_{2212} = \mathcal{Q}_{1222}$, $\mathcal{Q}_{2213} = \mathcal{Q}_{1223}$ and the trace-one condition.  
For $\mathcal{Q}_{23}$ we use $\mathcal{Q}_{2311} = \mathcal{Q}_{1123}$, $\mathcal{Q}_{2312} = \mathcal{Q}_{1123}$, $\mathcal{Q}_{2313} = \mathcal{Q}_{1233}$, $\mathcal{Q}_{2322} = \mathcal{Q}_{2223}$, $\mathcal{Q}_{2323} = \mathcal{Q}_{2233}$, 
along with the trace free condition.
Finally, for $\mathcal{Q}_{33}$ we use $\mathcal{Q}_{3311} = \mathcal{Q}_{1133}$, $\mathcal{Q}_{3312} = \mathcal{Q}_{1233}$, $\mathcal{Q}_{3313} = \mathcal{Q}_{1333}$, $\mathcal{Q}_{3322} = \mathcal{Q}_{2233}$, $\mathcal{Q}_{3323} = \mathcal{Q}_{2333}$ and 
the trace-one condition.

\end{proof}

{ We note that the expression in Lemma~\ref{lem:q3d} is the same as the one for the fourth order tensor $\mathbb{A}_{ijkl}$ in \cite{Schaft}.}

We now consider relaxations of $\mathcal{Q}$ by assuming that $\mathbf{q}:=(q_1,\ldots,q_9)\in\mathbb R^9$ is arbitrary and imposing the penalty
\begin{equation}
    \label{e:Wq3D}
    W(\mathbf{q})\equiv\left|\mathcal{Q}(\mathbf{q})^2-\mathcal{Q}(\mathbf{q})\right|^2.
\end{equation}
Following \eqref{eq:GLenergyq1}, the associated Ginzburg-Landau energy becomes
\begin{equation}
    \label{eq:efin}
    \mathcal{E}(\mathcal{Q}(\mathbf{q}))
    = \frac{1}{2} \int_\Omega \left[\LV \nabla \mathcal{Q}(\mathbf{q}) \RV^2 + \frac{1}{\e^2} W(\mathbf{q})\right] dx
    + \frac{1}{2\delta_\e^2} \int_{\p \Omega}  W(\mathbf{q}) ds
\end{equation}
with $W(\mathbf{q})$ defined above in \eqref{e:Wq3D}.

We now generate the boundary conditions in three dimensions. 

\begin{lem}
\label{lem:bound}
Let $\nu = (\nu_1,\nu_2,\nu_3)^T$ be an outward normal to the boundary.  
For $\mathcal{Q} = \mathcal{Q}(\mathbf{q})$ satisfying \eqref{eq:bc} then
 $\mathbf{q}$ satisfies the following set of constraints on the boundary
\begin{equation}
\label{eq:bcsys}
\left(
\begin{array}{ccccccccc}
\nu_1 & \nu_2 & \nu_3 & 0 & 0 & 0 & 0 & 0 & 0 \\
0 & \nu_1 & 0 & \nu_2 & \nu_3 & 0 & 0 & 0 & 0 \\
-\nu_3 & 0 & \nu_1 & -\nu_3 & \nu_2 & 0 & 0 & 0 & 0 \\
0 & 0 & 0 & \nu_1 & 0 & \nu_2 & \nu_3 & 0 & 0 \\ 
0 & -\nu_3 & 0 & 0 & \nu_1 & -\nu_3 & \nu_2 & 0 & 0 \\
0 & 0 & 0 & 0 & 0 & \nu_1 & 0 & \nu_2 & \nu_3 \\
0 & 0 & 0 & -\nu_3 & 0 & 0 & \nu_1 & -\nu_3 & \nu_2
\end{array}
\right)
\left(
\begin{array}{c}
q_1 \\ q_2 \\ q_3 \\ q_4 \\ q_5 \\ q_6 \\ q_7 \\ q_8 \\ q_9
\end{array}
\right)=
\left(
\begin{array}{c}
\nu_1^3 \\ \nu_1^2\nu_2 \\ -\left(\nu_2^2+\nu_3^2\right)\nu_3 \\ \nu_1\nu_2^2 \\ \nu_1\nu_2\nu_3 \\ \nu_2^3 \\ -\left(\nu_1^2+\nu_3^2\right)\nu_3
\end{array}
\right).
\end{equation}
The matrix on the left has rank 7. 
\end{lem}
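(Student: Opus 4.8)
The plan is to substitute the explicit form of $\mathcal{Q}(\mathbf{q})$ from Lemma~\ref{lem:q3d} into the boundary condition \eqref{eq:bc}, i.e. $\mathcal{Q}_{ij}\nu = \nu_i\nu_j\nu$ for all $i,j=1,2,3$, and then sort out which scalar equations are genuinely independent. A priori this is $6\times 3 = 18$ scalar equations (the blocks being symmetric, $\mathcal{Q}_{ij}=\mathcal{Q}_{ji}$). The first observation is that, because $\mathcal{Q}$ is a symmetric $4$-tensor, the $r$-th component of $\mathcal{Q}_{ij}\nu = \nu_i\nu_j\nu$ reads $\sum_s \mathcal{Q}_{ijrs}\nu_s = \nu_i\nu_j\nu_r$, and since $\mathcal{Q}_{ijrs}=\mathcal{Q}_{irjs}$ this is literally the same scalar identity as the $j$-th component of $\mathcal{Q}_{ir}\nu=\nu_i\nu_r\nu$. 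Hence each scalar equation depends only on the multiset $\{i,j,r\}$, which cuts the count from $18$ down to at most $10$.

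Next I would use the trace constraints built into $\Mthreerelax$. From $\tr\,\mathcal{Q}_{ij}=\delta_{ij}$ and the symmetry of $\mathcal{Q}$ one gets $(\mathcal{Q}_{11}+\mathcal{Q}_{22}+\mathcal{Q}_{33})_{rs}=\sum_i\mathcal{Q}_{iirs}=\sum_i\mathcal{Q}_{rsii}=\tr\,\mathcal{Q}_{rs}=\delta_{rs}$, i.e. $\mathcal{Q}_{11}+\mathcal{Q}_{22}+\mathcal{Q}_{33}=I$ (this is also visible directly in the formulas of Lemma~\ref{lem:q3d}). Consequently, once $\mathcal{Q}_{11}\nu=\nu_1^2\nu$ and $\mathcal{Q}_{22}\nu=\nu_2^2\nu$ hold, the identity $\mathcal{Q}_{33}\nu=(I-\mathcal{Q}_{11}-\mathcal{Q}_{22})\nu=(1-\nu_1^2-\nu_2^2)\nu=\nu_3^2\nu$ is automatic; this is the one place $|\nu|^2=1$ is used, and it is precisely where the constant $1$'s in the block entries get absorbed, producing terms such as $-(\nu_2^2+\nu_3^2)\nu_3$ on the right-hand side of \eqref{eq:bcsys}. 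Thus all scalar equations whose multiset contains the index $3$ twice (the multisets $\{1,3,3\}$, $\{2,3,3\}$, $\{3,3,3\}$) are redundant, leaving the $10-3=7$ equations indexed by $\{1,1,1\}$, $\{1,1,2\}$, $\{1,1,3\}$, $\{1,2,2\}$, $\{1,2,3\}$, $\{2,2,2\}$, $\{2,2,3\}$. Reading these off from the entries of $\mathcal{Q}_{11},\mathcal{Q}_{12},\mathcal{Q}_{22}$ in Lemma~\ref{lem:q3d} and rearranging gives exactly the seven rows of \eqref{eq:bcsys}; conversely these seven rows together with $\mathcal{Q}_{11}+\mathcal{Q}_{22}+\mathcal{Q}_{33}=I$ recover all of \eqref{eq:bc}, so the two systems are equivalent.

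For the rank assertion, the matrix has seven rows so its rank is at most $7$, and it suffices to show the affine solution set of \eqref{eq:bcsys} in $\mathbf{q}\in\R^9$ is exactly $2$-dimensional. It is nonempty (take the tensor associated to any orthonormal frame containing $\nu$), so its dimension equals that of the linear space $\mathcal{K}_\nu$ of symmetric $4$-tensors $\mathcal{R}$ with vanishing block traces, $\tr\,\mathcal{R}_{ij}=0$, satisfying $\mathcal{R}_{ij}\nu=0$ for all $i,j$. The latter condition forces every component of $\mathcal{R}$ having at least one index along $\nu$ to vanish, so $\mathcal{R}$ is a symmetric $4$-tensor on the plane $\nu^\perp$; such tensors form a $5$-dimensional space, on which the relations $\tr\,\mathcal{R}_{ij}=0$ reduce (by the same tensorial computation as above) to three independent linear conditions. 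Hence $\dim\mathcal{K}_\nu=2$ for every unit $\nu$, so the rank of \eqref{eq:bcsys} is $9-2=7$. (Alternatively one can just exhibit a nonvanishing $7\times 7$ minor—for instance the one on the columns of $q_1,q_2,q_3,q_5,q_7,q_8,q_9$ is a nonzero multiple of $\nu_3(\nu_1^2+\nu_3^2)^2(\nu_2^2+\nu_3^2)$—with the cases $\nu_3=0$ handled by the analogous minors obtained from relabeling the coordinate axes.)

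The only tedious part is the bookkeeping in the middle step: matching each of the seven surviving scalar equations to the correct row of \eqref{eq:bcsys}, including the sign rearrangements coming from the $1-q_1-q_4$-type entries. The one genuinely delicate point is making sure the reduction from ten to seven equations is accounted for \emph{exactly} by $\mathcal{Q}_{11}+\mathcal{Q}_{22}+\mathcal{Q}_{33}=I$ and $|\nu|^2=1$ and that nothing further collapses; this is confirmed a posteriori by the rank-$7$ computation.
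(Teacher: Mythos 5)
Your proposal is correct, and it takes a genuinely more structured route than the paper. The paper's proof of this lemma is a one-liner: it asserts (with what appears to be a typographical slip, referring only to $\mathcal{Q}_{11}\nu=\nu_1^2\nu$, which alone yields only the first three rows) that \eqref{eq:bcsys} follows from substituting the parameterization of Lemma~\ref{lem:q3d} into \eqref{eq:bc}, and then says the rank is ``a direct calculation.'' Indeed, rows 1--3 come from $\mathcal{Q}_{11}\nu=\nu_1^2\nu$, rows 4, 6, 7 from $\mathcal{Q}_{22}\nu=\nu_2^2\nu$, and row 5 from the third component of $\mathcal{Q}_{12}\nu=\nu_1\nu_2\nu$, with $|\nu|^2=1$ absorbing the affine constants into the right-hand side. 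What you do differently is (i) use the full symmetry of the $4$-tensor to organize the $18$ scalar equations of \eqref{eq:bc} by multiset of indices, cutting the count to $10$, (ii) show via the identity $\mathcal{Q}_{11}+\mathcal{Q}_{22}+\mathcal{Q}_{33}=I$ (itself a consequence of symmetry and the block-trace condition) together with $|\nu|=1$ that the three equations with a repeated index $3$ are redundant, and (iii) prove the rank is $7$ by a dimension count on the kernel rather than by exhibiting a minor. Your route explains \emph{why} exactly seven constraints survive and why the rank cannot drop for any unit $\nu$ --- the kernel is canonically identified with trace-free symmetric $4$-tensors on $\nu^\perp\cong\R^2$, a $5-3=2$ dimensional space, independent of $\nu$; this is cleaner than a case-by-case minor computation (your stated minor on columns $1,2,3,5,7,8,9$ evaluates to $-\nu_3(\nu_2^2+\nu_3^2)(\nu_1^2+\nu_3^2)^2$, which does vanish when $\nu_3=0$, so the minor route genuinely needs the relabeling you mention). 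The paper's direct calculation is shorter but opaque; your version buys conceptual clarity and a $\nu$-uniform rank argument.
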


\begin{proof}
Equation~\eqref{eq:bcsys} follows from 
$\mathcal{Q}_{11} \nu = \nu_{1}^2 \nu$.  
The matrix rank follows by a direct calculation.  
\end{proof}

Note that Lemma \ref{lem:bound} prescribes only seven conditions on the nine variables $q_i,\ i=1,\ldots,9.$ The remaining two conditions are then the natural boundary conditions for the variational problem associated with the energy \eqref{eq:efin}.

\begin{rem}
We note that if $\nu = (1,0,0)$, then the boundary condition reduces to two dimensions and the boundary conditions in three dimensions.  In particular, assume that $(1,0,0)$ is an eigenvector of every $3\times3-$block $\mathcal{Q}_{ij}$ with the eigenvalue $\delta_{1i}\delta_{1,j}$
\begin{equation} \label{eq:3Dreduce2D}
\hspace{-23mm}\left(\hspace{-.3cm}\begin{array}{ccc} \left(\begin{array}{ccc} 1 & 0 & 0 \\ 0 & 0 & 0 \\ 0 & 0 & 0 \end{array}\right) & \left(\begin{array}{ccc} 0 & 0 & 0 \\ 0 & 0 & 0 \\ 0 & 0 & 0 \end{array}\right) & \left(\begin{array}{ccc} 0 & 0 & 0 \\ 0 & 0 & 0 \\ 0 & 0 & 0 \end{array}\right) \\\\ \left(\begin{array}{ccc} 0 & 0 & 0 \\ 0 & 0 & 0 \\ 0 & 0 & 0 \end{array}\right) & \left(\begin{array}{ccc} 0 & 0 & 0 \\ 0 & q_8 & q_9 \\ 0 & q_9 & 1-q_8\end{array}\right) & \left(\begin{array}{ccc} 0 & 0 & 0\\ 0 & q_9 & 1-q_8 \\ 0 & 1-q_8 & -q_9 \end{array}\right) \\\\ \left(\begin{array}{ccc} 0 & 0 & 0 \\ 0 & 0 & 0 \\ 0 & 0 & 0 \end{array}\right) & \hspace{-.4cm} \left(\begin{array}{ccc} 0 & 0 & 0\\ 0 & q_9 & 1-q_8 \\ 0 & 1-q_8 & -q_9 \end{array}\right)  & \hspace{-.4cm} \left(\begin{array}{ccc} 0 & 0 & 0 \\ 0 & 1-q_8 & -q_9 \\ 0 & -q_9 & q_8\end{array}\right) \end{array}\hspace{-.3cm}\right)
\end{equation}
We note the similarity between \eqref{e:2DQdef} and \eqref{eq:3Dreduce2D}.
\end{rem}

\section{Numerical Examples in 3D}
\label{sec:examples}
In this section we use the finite elements software package COMSOL \cite{comsol} to find solutions of the Euler-Lagrange equations for the functional \eqref{eq:efin}, subject to the constraints \eqref{eq:bcsys} on the boundary. In what follows, we refer to this equation as the Ginzburg-Landau PDE. For each domain geometry we ran a gradient flow simulation {starting from a constant initial condition until the numerical solution reached an equilibrium}. The system of PDEs that we solve is given in the Appendix \ref{ap:2}. {The parameters $\varepsilon$ and $\delta_\varepsilon$ were taken to be small, typically around $10\%$ of the domain size}. Note that there is a relationship between $\varepsilon$ and $\delta_\varepsilon$ that determines whether the topological defects of minmimizers of \eqref{eq:efin} lie on the boundary or the interior of the domain \cite{ABG}. {As we already stated above, we do not investigate this issue further in the present paper}. 

\subsection{Cube with a cylindrical notch}
The first simulation was run for a domain in the shape of a cube with a cylindrical notch (Figs.~\ref{fig0}-\ref{fig01}) and was motivated by an example in \cite{viertel2016analysis}. A critical solution of the Ginzburg-Landau PDE recovered via gradient flow shows that the vertical line remains one of directions of the $3$-cross everywhere in the domain. The solution has one disclination line depicted in blue in the left inset in Fig.~\ref{fig0}. The $3$-cross distribution in a horizontal cross-section of the domain at the level that includes the notch is shown in Fig.~\ref{fig01}. The trace of the disclination in this cross-section is circled in red. The right inset in Fig.~\ref{fig0} shows three families of streamlines along the lines of the $3$-cross field. 

\subsection{Spherical shell}
Here we solve the Ginzburg-Landau PDE in a three-dimensional shell that lies between the spheres of radii $0.495$ and $0.5$ with $\varepsilon=\delta_\varepsilon=0.02$, subject to the system of constraints \eqref{eq:bcsys} on both boundaries of the shell. As expected, the solution gives the array of eight vortices shown in Fig.~\ref{fig5}. These vortices are actually short disclination lines that connect the components of $\partial\Omega$. The distribution of $3$-crosses on one eighth of the outer sphere is shown in Fig.~\ref{fig5.5}. The vortex of degree $1/4$ is indicated by the red ellipse.

\begin{figure}
\begin{center}
\includegraphics[scale=.8]{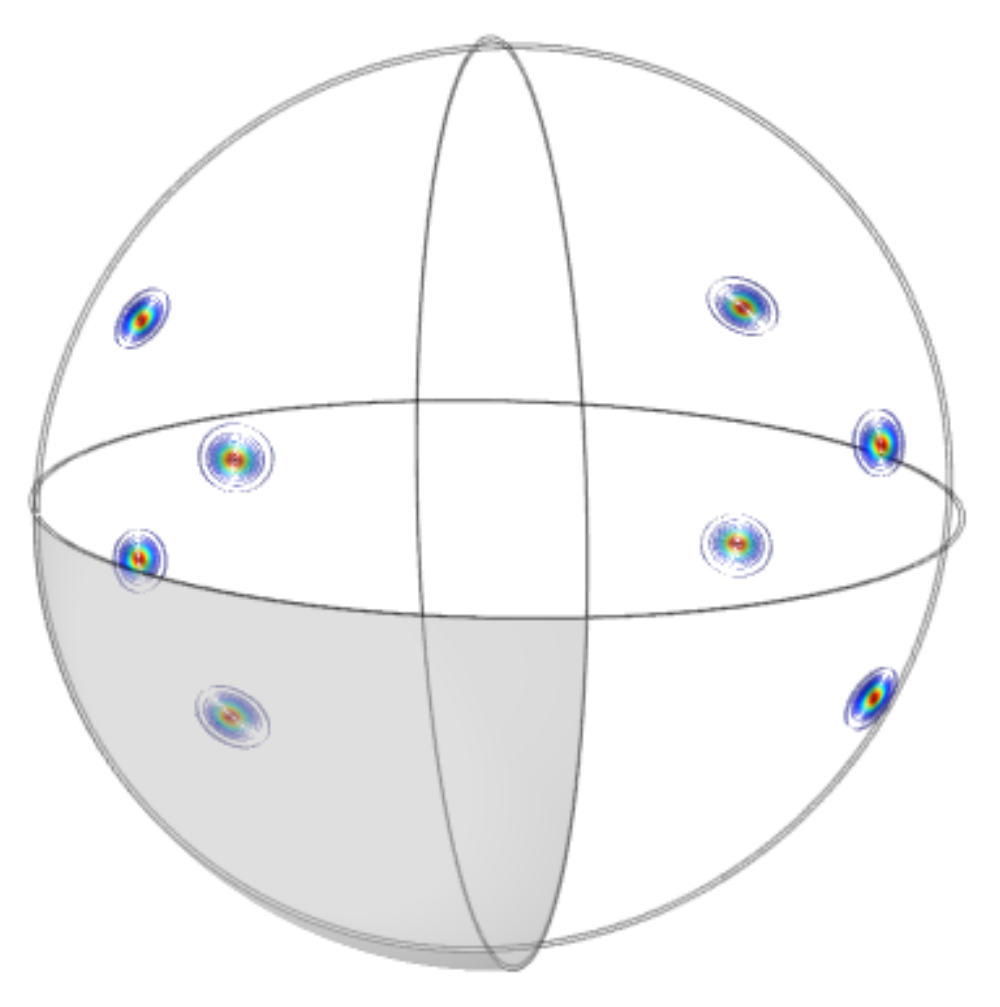}
\caption{Vortices on a thin spherical shell. The figure shows the contour plot of the potential $W(Q)$.}\label{fig5}
\end{center}
\end{figure}
\begin{figure}
\begin{center}
\includegraphics[scale=.5]{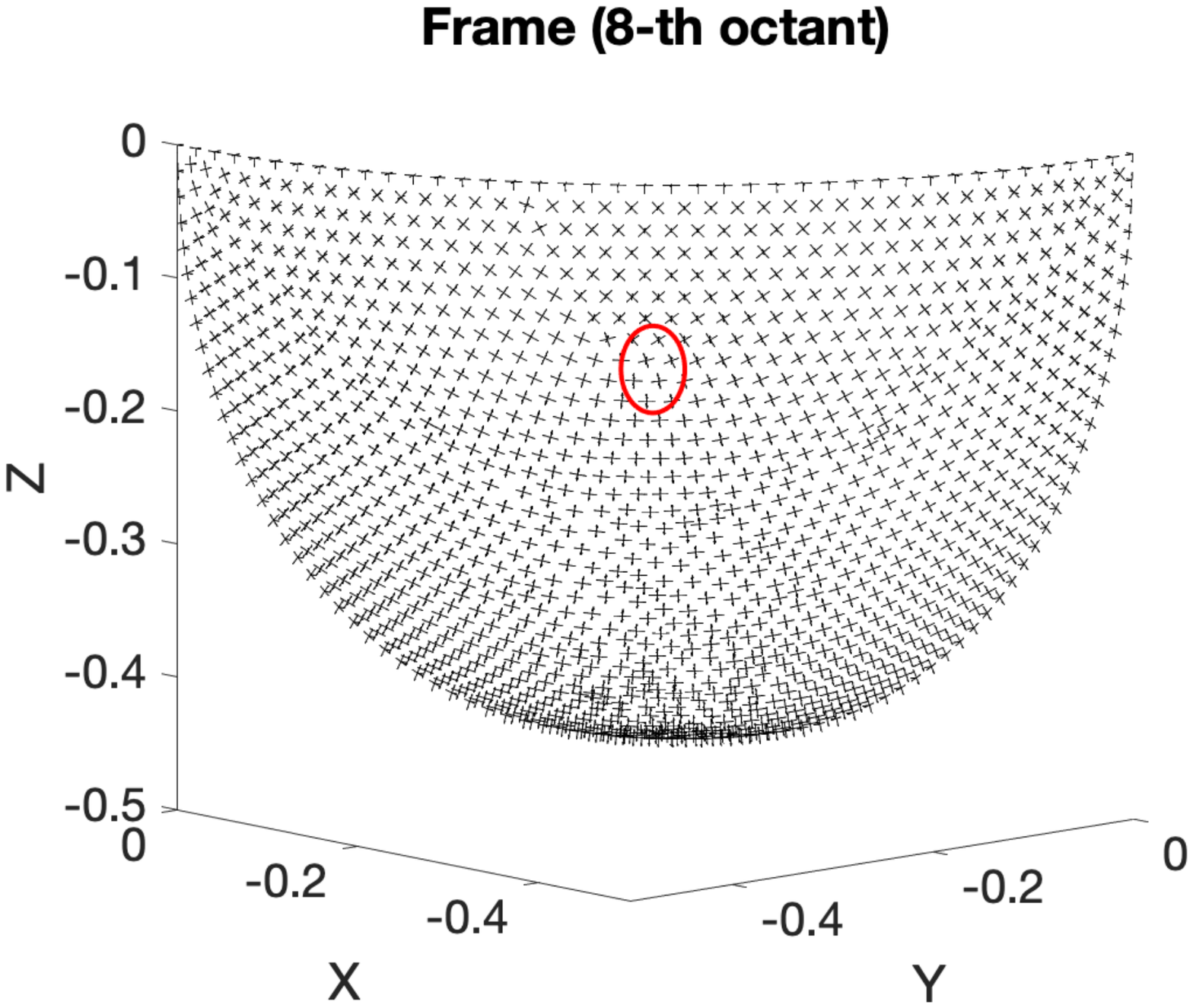}
\caption{The $3$-cross distribution on the shaded one eighth of the spherical shell in Fig.~\ref{fig5}. Note that each octant in Fig.~\ref{fig5} has exactly one vortex associated with it.}\label{fig5.5}
\end{center}
\end{figure}

\subsection{Ball}
Simulations in a ball resulted in Figs. \ref{fig6}-\ref{fig7}. One can see a similar pattern of surface vortices as in the case of a spherical shell, now connected by the line singularities that run close to the surface of the ball. The cross-section of the $3$-cross field in the ball are depicted in Fig.~\ref{fig7}. The lengths of the frame vectors inside the disclination cores are scaled to make the intersections between the disclinations and $xy$-plane more visible. 

\begin{figure}
\begin{center}
\includegraphics[height=3.5in,width=3.9in]{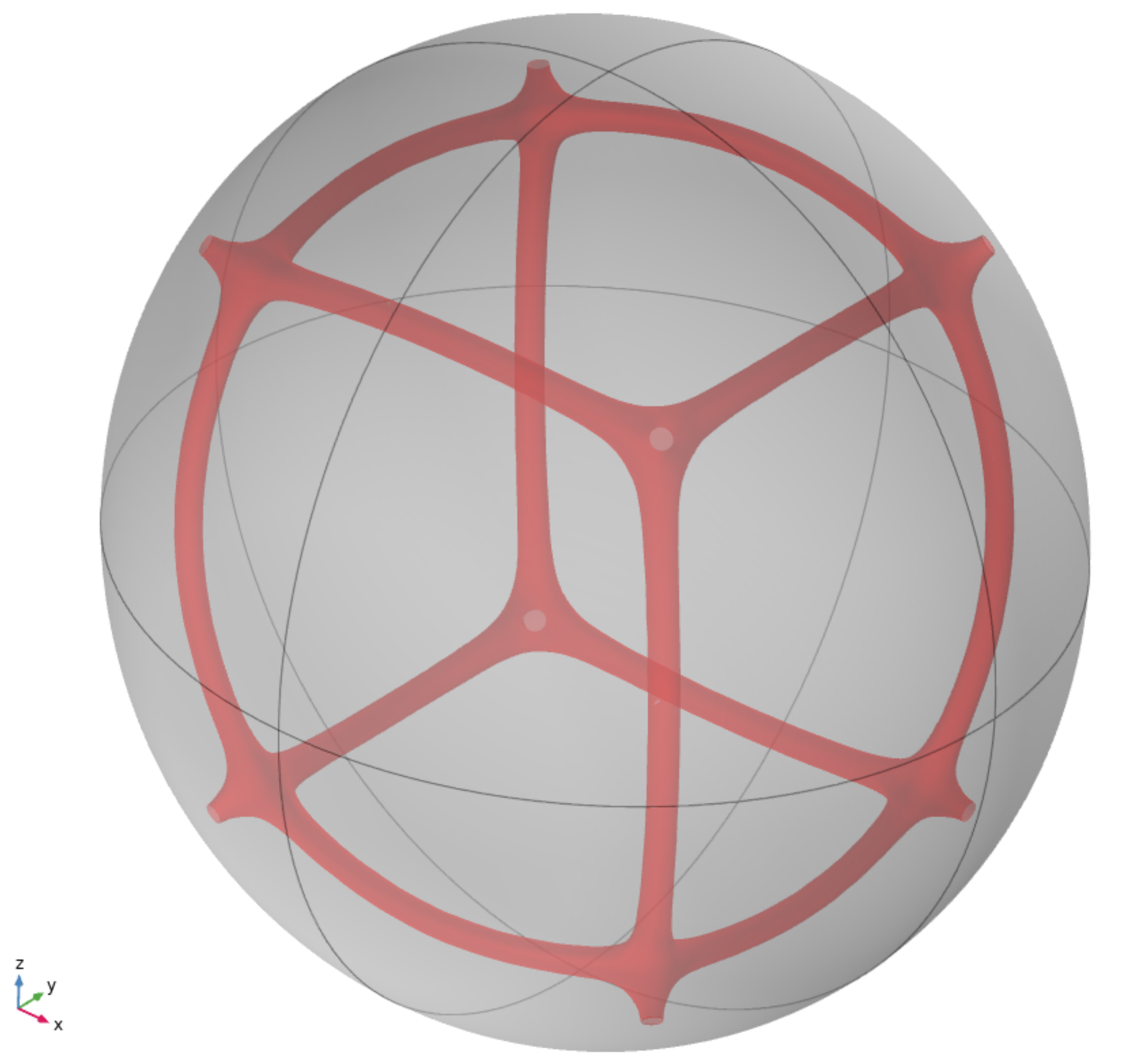}
\caption{Singularities in a ball. The contour plot of the potential $W(Q)$ indicates that there are eight surface vortices connected by eight disclination lines.}\label{fig6}
\end{center}
\end{figure}
\begin{figure}
\begin{center}
\includegraphics[scale=.7]{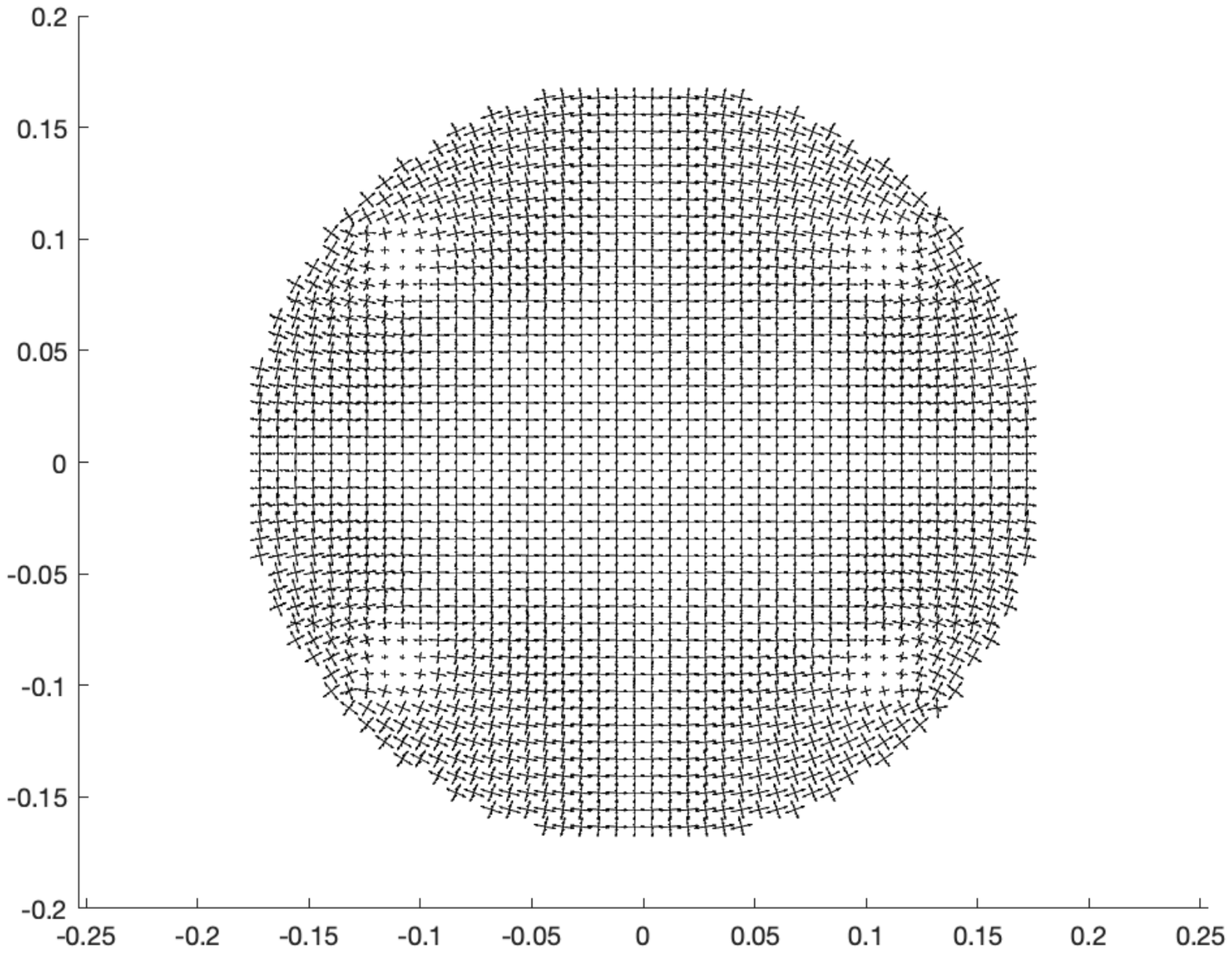}
\caption{Cross-section along the $xy$-plane of the $3$-cross field inside a ball.}\label{fig7}
\end{center}
\end{figure}

\subsection{Toroidal domain with a cylindrical hole}
The next example deals with the domain in a shape of a toroid with a cylindrical hole (Fig.~\ref{fig8}, left), motivated by an example in \cite{viertel2016analysis}. One can see in the right inset in Fig.~\ref{fig8} that four line singularities are present in the undrilled part of the torus. This is expected since two out of the free line fileds that form a $3$-cross should have the winding number $1$ along the circumference of the torus and the $3$-cross makes four turns along the same path. This suggests that there are four line singularities in this part of the domain as it should be energetically preferable for a degree one singularity to split into four degree $1/4$ singularities of the same type. The cross-sections of the $3$-cross field in the sphere are depicted in Fig.~\ref{fig9}.

\begin{figure}
\begin{center}
\includegraphics[scale=.4]{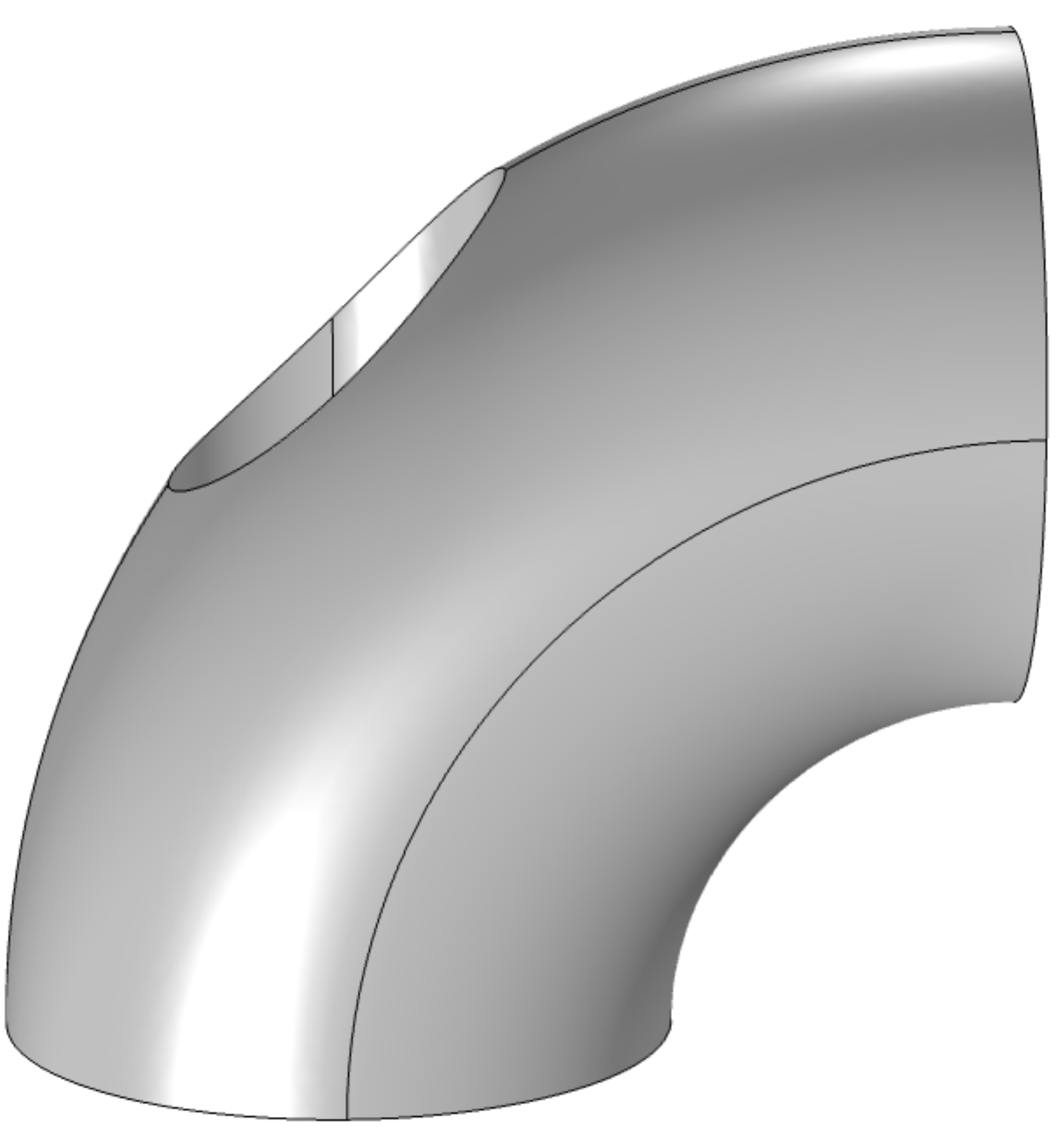} \quad \includegraphics[scale=.4]{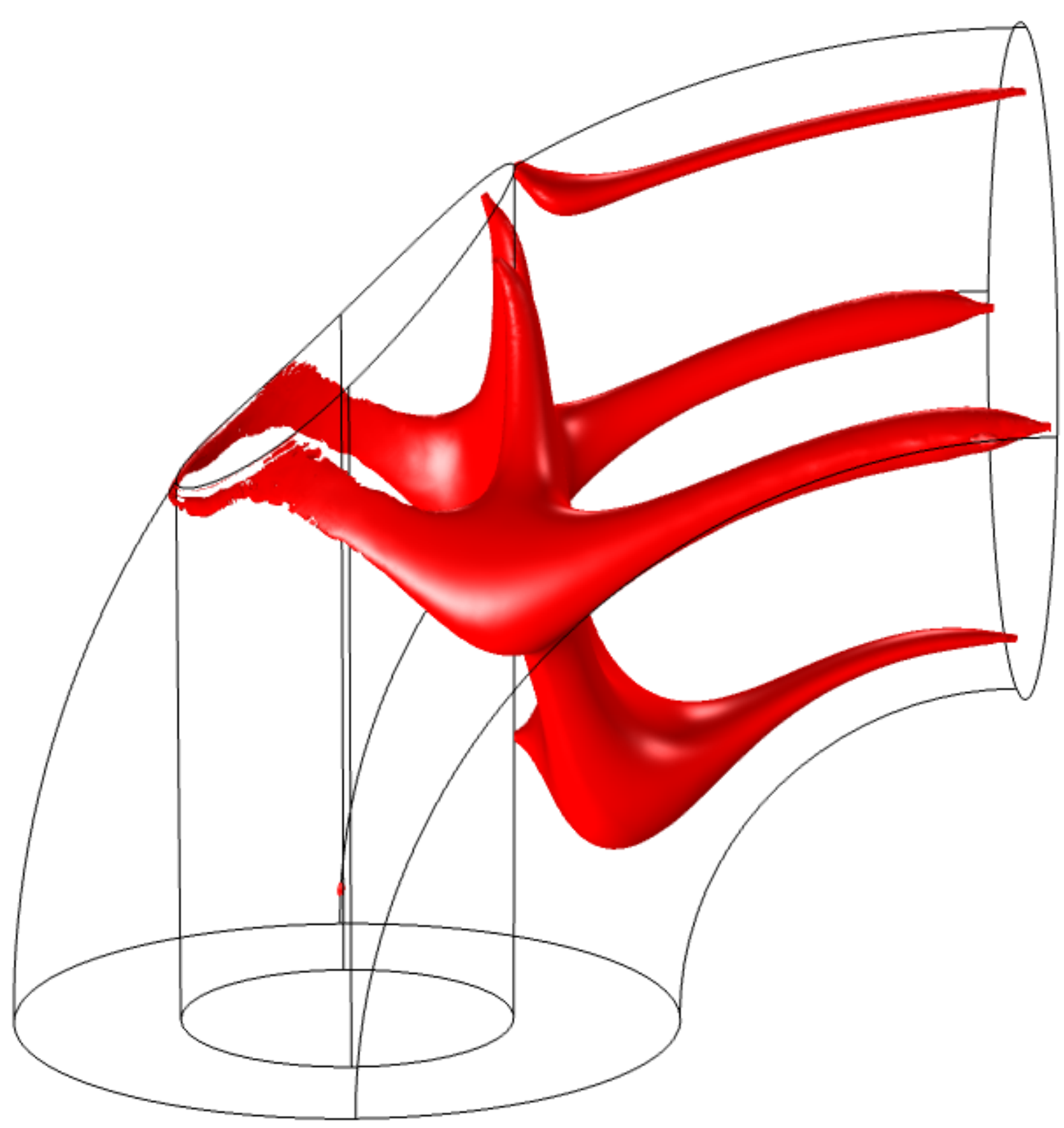}
\caption{Disclinations on the toroidal domain with a cylindrical hole. The contour plot of the potential $W(Q)$ indicates that there are four disclination lines in the part of the domain away from the hole.}\label{fig8}
\end{center}
\end{figure}
\begin{figure}
\begin{center}
\includegraphics[scale=.7]{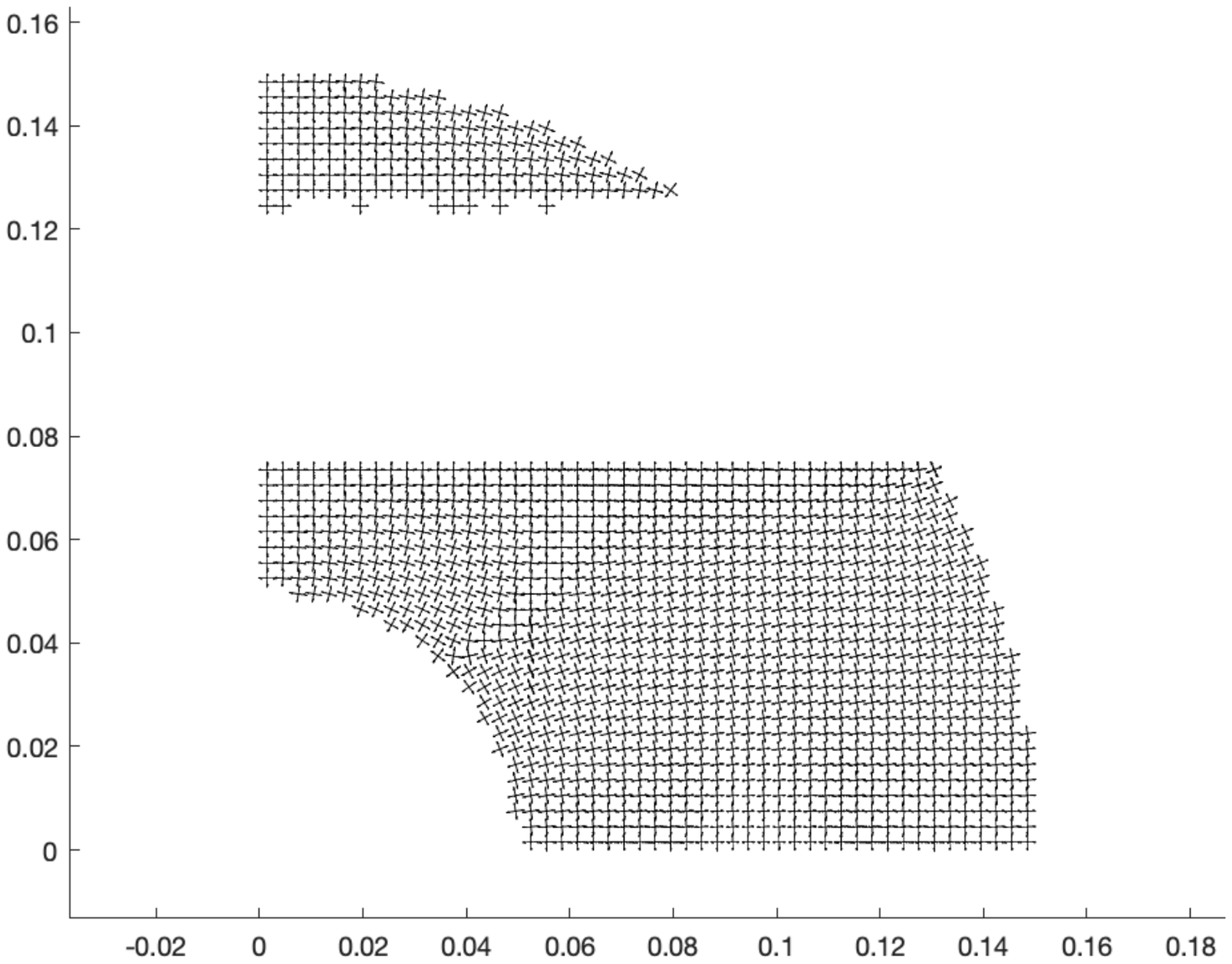} \qquad \includegraphics[scale=.7]{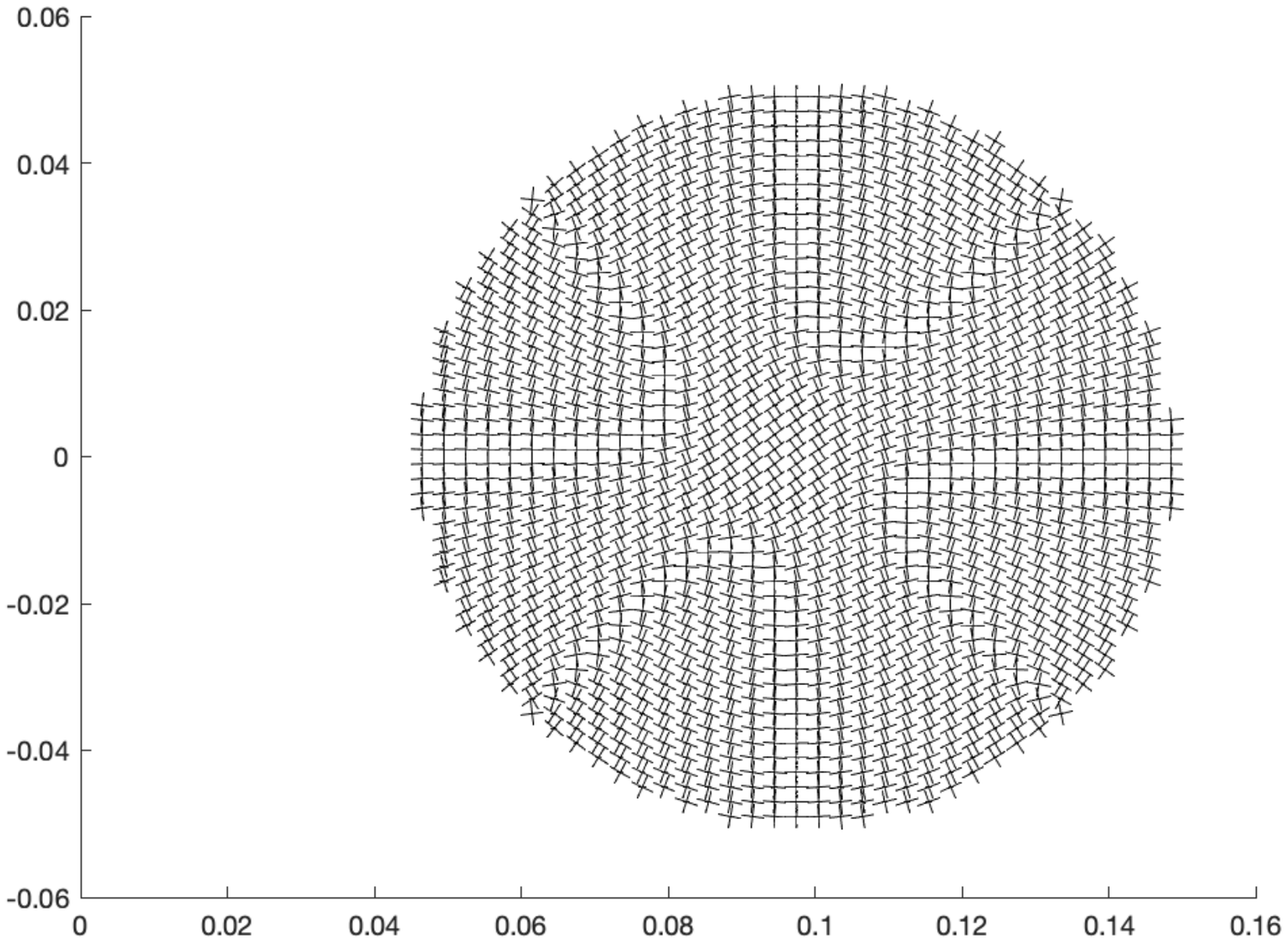} \\ \vspace{5mm} \includegraphics[scale=.7]{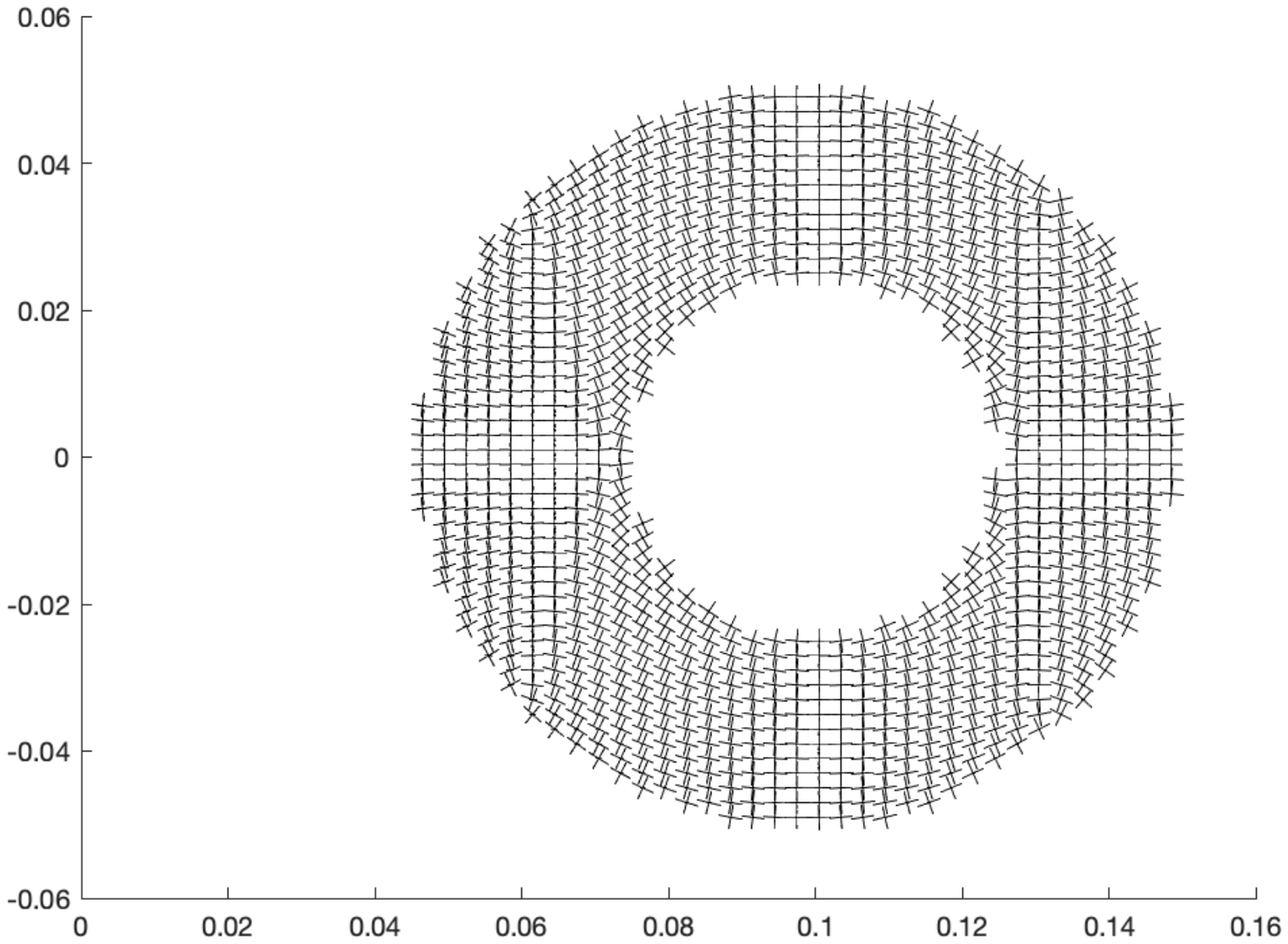}
\caption{Cross-sections of the $3$-cross field inside the toroidal domain with a cylindrical hole along $xy$-, $xz$-, and $yz$-planes, respectively.}\label{fig9}
\end{center}
\end{figure}

\subsection{Domains with complex geometries}
The last set of examples (Fig.~\ref{fig10}) shows disclinations networks in domains with complex geometries. The common features dictated by the topology of a domain include, for example,  disclinations associated with rounded corners, four disclination lines associated with a cylindrical hole in a rectanguar cylinder, and the absence of disclinations in a cylindrical region with a concentric cylindrical hole. 
\begin{figure}
\begin{center}
\includegraphics[scale=.35]{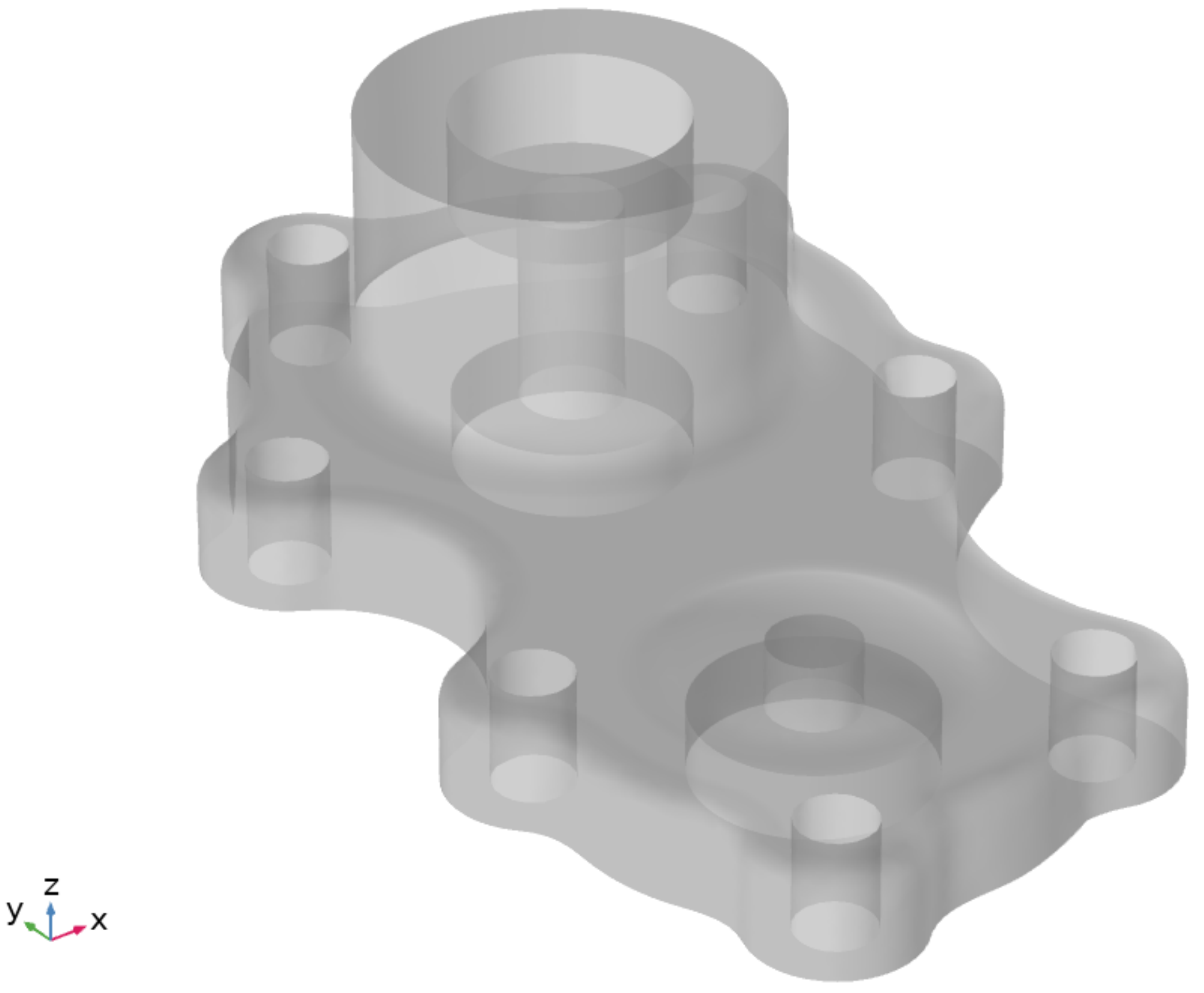} \qquad \includegraphics[scale=.35]{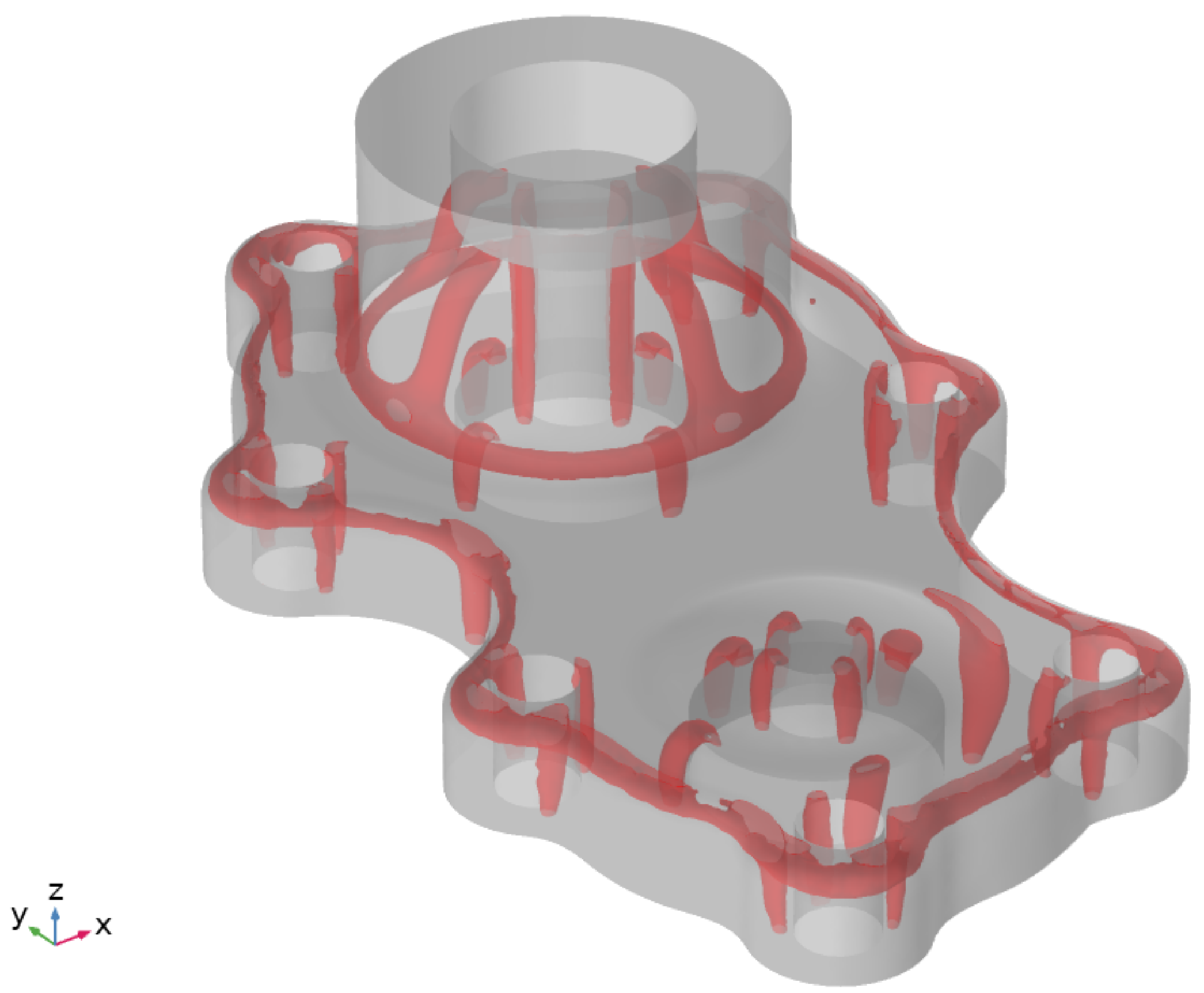} \\ \vspace{5mm} \includegraphics[scale=.4]{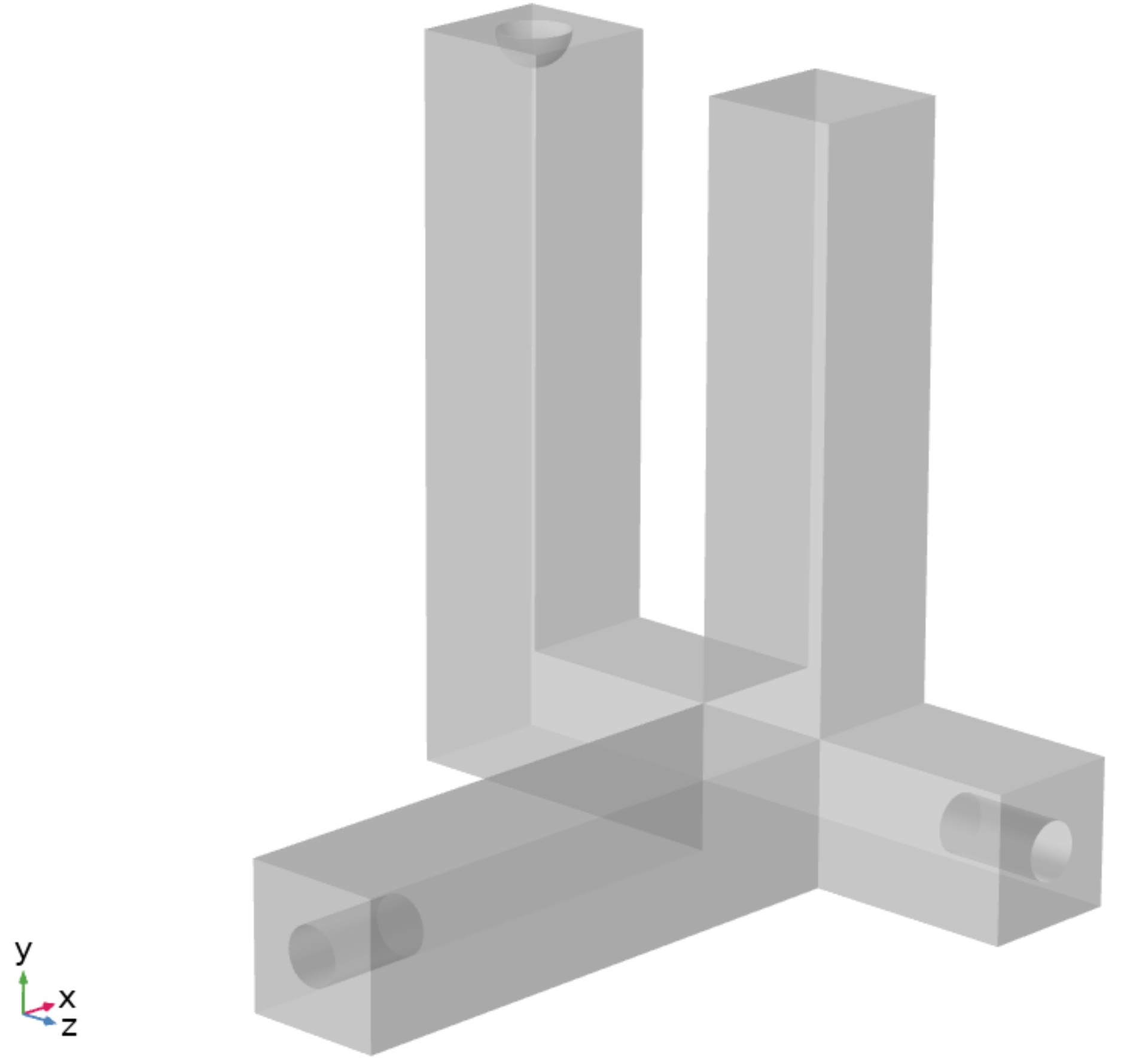} \qquad \includegraphics[scale=.4]{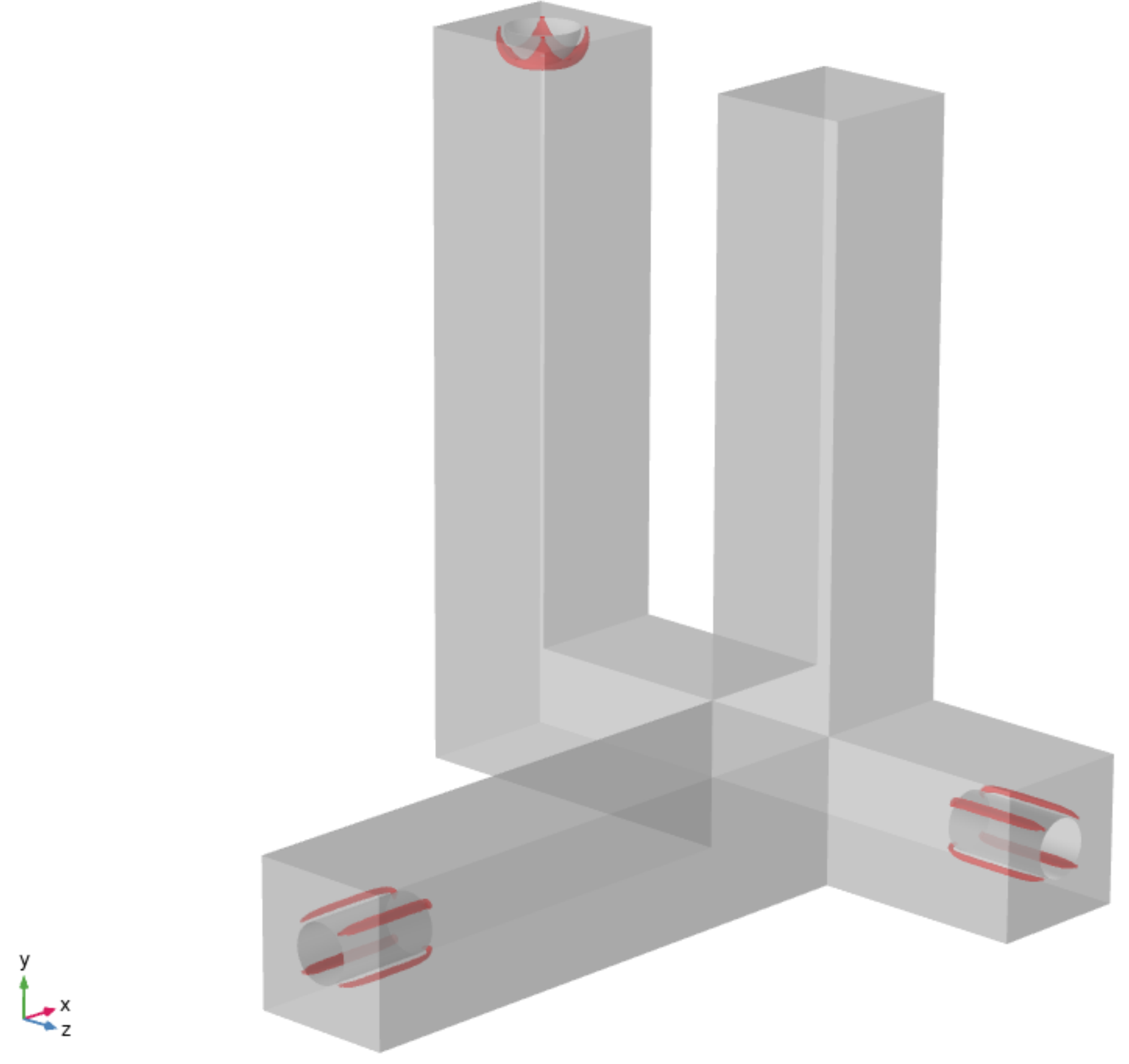}
\caption{Disclinations in the domains with complex geometries. The right column shows the contour plot of the potential $W(Q)$ associated the domain shown in the left column.}\label{fig10}
\end{center}
\end{figure}

{
\section{Discussion}
\label{sec:discussion}
Both 2- and 3-cross fields are finding increasing use in a diverse set of applications, including in scientific computing and computer graphics.  As  methods are developed, it is of rising importance to establish an analytic framework that explains the limiting behavior of these numerically generated cross fields. 

A rigorous treatment of $n$-cross fields initiated in this paper provides a useful framework to better understand common features and challenges in the numerical generation of cross fields that conform to the boundary of Lipschitz domains. The work presented here provides a novel relaxation by identifying a natural Ginzburg-Landau energy where the fourth order symmetric tensor is smooth, but remains close to the associated $n$-cross field.  In particular one can avoid generic singular sets, as have been seen numerically and analytically. This new relaxation also provides a new way to visualize the development of singular sets, and this is done by looking for the set where the fourth order tensor $\mathcal{Q}$ fails to be a projection.  Furthermore, we identify a natural boundary relaxation. The associated Ginzburg-Landau energy leads to a new and potentially interesting Calculus of Variations problem which can be used to study the behavior of singular sets.  

There are a number of interesting directions of  inquiry.  One very important question is to better understand and characterize the behavior of the limiting singular set.  This singular set, where the fourth order symmetric tensor $\mathcal{Q}$ fails to be a projection, seems to concentrate on a co-dimensions two rectifiable set.  Identifying and establishing the associated $\Gamma$-limit of the energy \eqref{eq:GLenergyq1} may provide some insight into the structure of the singular set, including the seeming generic development of quadratic junctions in the singular set, see Figure~\ref{fig6}.

Since we are far from understanding the global minimizing behavior of the limiting cross fields, another avenue of study is to see whether other relaxation methods, such as the MBO-based method for odeco tensors found in \cite{palmer2019algebraic} converges to the same limiting cross field.
Indeed one can relax towards the odeco variety instead of $n$-cross fields via the Ginzburg-Landau energy:
\begin{equation} \label{eq:ODECOrelaxation}
E_{Od,\e} = {\frac12} \int_{\Omega} |\nabla \mathcal{Q}|^2
+ \frac{1}{\varepsilon^2} | ( \mathbb{I} - \mathbb{P}_{Sym} ) (\mathcal{Q}^2) |^2 + \frac{1}{\e^2} | ( \mathbb{I} - \mathbb{P}_{Sym} ) (\mathcal{Q}^4) |^2 dx
\end{equation}
for $\mathcal{Q} \in \Mrelax$,
where $\mathbb{P}_{Sym}$ is defined in \eqref{eq:projsymtensor}.  Understanding the difference in the singular sets of these two Ginzburg-Landau relaxations is another avenue of study. Here establishing an analog of the Hairy Ball Theorem for odeco varieties is an interesting question.

Finally, it is of interest to see if there is an explicit way to generate the nearest $n$-cross field from any fourth order symmetric tensor with trace conditions. Such an explicit representation could allow for a much faster MBO-based method for generating an $n$-cross field.  
}

\section{Appendix A:  Proof of Theorem \ref{thm:limitrelax}}
\label{sec:8}


In this appendix we prove Theorem \ref{thm:limitrelax}.  To do this we will need to set up our notation, and we do that first.  After that we provide the proof of the Theorem.

\subsection{Notation}

In this section ${\mathcal L}(F)$ denotes the set of linear maps from the vector space $F$ to itself.  


The symbol $\Mall$ will denote the set of all $n\times n$ matrices with real entries.  We will continue to write
$$
\langle A, B \rangle = {\rm tr}(B^TA)
$$
for $A, B\in \Mall$.  The spaces $\Mallnsqr$ and ${\mathcal L}(\Mall)$ are isomorphic and we consider an explicit isomorphism identifying $\Mall$ with $\R^{n^2}$. To this end, if $B\in \Mall$, we write $B=({\bf b}^1 | ... | {\bf b}^n)$ where ${\bf b}^j\in \R^n$, $j = 1, ..., n$, is the $j$-th column of $B$.  Then we set
\begin{align}
& \bX: \Mall \to \R^{n^2} \nonumber\\
&B \to \bX(B) = \bX_B = 
\begin{pmatrix}
{\bf b}^1 \\ 
\vdots \\
{\bf b}^n
\end{pmatrix}\label{e:define_bold_X}
\end{align}
In other words, we identify $B$ with the vector in $\R^{n^2}$ that has the columns of $B$ stacked up vertically.  We will use the notations ${\bf X}(B)$ and ${\bf X}_B$ interchangeably.  It is easy to check that we have
$$
\bX_A\cdot \bX_B = \langle A , B \rangle = {\tr}(B^TA)
$$
for all $A, B\in \Mall$, where $\bX_A\cdot \bX_B$ denotes the standard dot product in $\mathbb{R}^{n^2}$.  

Note that with this identification between $\Mall$ and $\mathbb{R}^{n^2}$ we can consider $A, B \in \Mall$ and define the rank-$1$, $n^2\times n^2$ matrix
$$
\bX_{A}\bX_{B}^T.
$$
Matrices of this form will appear repeatedly in the rest of this section.

With this particular identification we define
\begin{equation}\label{def_second_id}
\Phi_0 : \Mallnsqr \to {\mathcal L}(\Mall)
\end{equation}
by the conditions that (a) $\Phi_0$ be linear and (b) for any $A, B, C \in \Mall$
$$
\Phi_0(\bX_A \bX_B^T)(C) = \langle B, C \rangle A.
$$
Well-known properties of tensor products show that this defines $\Phi_0$ completely.

The condition that defines $\Phi_0$ can be equivalently stated as follows: if $A, B, C \in \Mall$, and $R_{A, B}\in {\mathcal L}(\Mall)$ is defined by
\begin{equation}\label{eq:def_R_A_B}
R_{A, B}(C) = \langle B, C\rangle A,
\end{equation}
then
\begin{equation}\label{eq:rel_mat_lin_map}
R_{A, B} = \Phi_0({\bf X}_A {\bf X}_B^T).
\end{equation}
For $A=B$ we will write $R_{A}$ instead of $R_{A, A}$.  

\medskip
\medskip

Yet a third way to interpret the definitions of $\Phi_0$ and ${\bf X}$ is the following:  for every ${\mathcal Q}\in \Mallnsqr$ and every $A\in \Mall$, if ${\mathcal Q}_L = \Phi_0({\mathcal Q})$, then
\begin{equation}\label{eq:rel_Q_lin_map}
{\bf X}_{{\mathcal Q}_L(A)} = {\mathcal Q}{\bf X}_A.
\end{equation}
Here we interpret ${\mathcal Q}{\bf X}_A$ as the $n^2\times n^2$ matrix ${\mathcal Q}$ multiplying the vector ${\bf X}_A \in\R^{n^2}$ in a standard fashion, whereas on the left hand side ${\mathcal Q}_L(A)$ denotes the linear map ${\mathcal Q}_L$ from $\Mall$ to itself, acting on the matrix $A\in \Mall$.  This is the standard identification between $\Mallnsqr$ and ${\mathcal L}(\Mall) \cong {\mathcal L}(\R^{n^2})$ that comes from the identification $\Mall \cong \R^{n^2}$ provided by the isomorphism ${\bf X} : \Mall \to \R^{n^2}$.  In particular, this shows that if $A\in \Mall$, then ${\bf X}_A$ is an eigenvector of $\mathcal Q$, if and only if $A$ is an eigenvector of ${\mathcal Q}_L = \Phi_0({\mathcal Q})$ with the same eigenvalue.

For later reference it will be useful to have concrete expressions for the $n^2\times n^2$ matrices of two elements of ${\mathcal L}(\Mall)$.  We record them here.  The first one is the matrix of $R_{A, B}$ defined in \ref{eq:def_R_A_B}.  Note that \ref{eq:rel_mat_lin_map} already gives us an expression for the matrix of $R_{A, B}$.  More precisely, the matrix ${\mathcal Q}^R_{A, B}\in \Mallnsqr$ defined by the equation
$$
{\bf X}_{R_{A, B}(C)} = {\mathcal Q}^R_{A, B} \,\,  {\bf X}_C
$$
can be expressed as
$$
{\mathcal Q}^R_{A, B} = {\bf X}_A{\bf X}_B^T.
$$

The second map from ${\mathcal L}(\Mall)$ we will refer to later is $L_{A, B}\in {\mathcal L}(\Mall)$, for $A, B\in \Mall$, defined by the equation
\begin{equation}\label{eq:matrix_pre_post_mult}
L_{A, B}(C) = ACB^T
\end{equation}
for all $C\in \Mall$.  We write $L_A$ when $A=B$.  A direct computation shows that the matrix ${\mathcal Q}^L_{A, B} \in \Mallnsqr$ defined by the equation
$$
{\bf X}_{L_{A, B}(C)} = {\mathcal Q}^L_{A, B} \,\,  {\bf X}_C
$$
for all $C\in \Mall$ can be expressed as
$$
{\mathcal Q}^L_{A, B} = \left (   \begin{array}{ccc}  B_{11}A & \cdots & B_{1n}A  \\ \vdots & \ddots & \vdots \\ B_{n1}A  & \cdots & B_{nn}A \end{array} \right ).
$$

\subsection{Permutation Operators}

Recall $S_4$, the group of permutation of the set $\{1, 2, 3, 4\}$.  For $\sigma\in S_4$, define
\begin{equation}\label{def:permutation_op}
T_\sigma : \Mallnsqr\to \Mallnsqr
\end{equation}
by the conditions that $T_\sigma$ be linear and
$$
T_\sigma({\bf X}_{{\bf u}^1 ({\bf u}^2)^T}^T {\bf X}_{{\bf u}^3 ({\bf u}^4)^T}^T) = {\bf X}_{{\bf u}^{\sigma(1)} ({\bf u}^{\sigma(2)})^T} {\bf X}_{{\bf u}^{\sigma(3)} ({\bf u}^{\sigma(4)})^T}^T
$$
for every ${\bf u}^1, {\bf u}^2, {\bf u}^3, {\bf u}^4\in \R^n$.  Note that, for ${\bf a}, {\bf b}\in \R^{n}$, ${\bf a}{\bf b}^T \in \Mall$.  Again, standard facts about tensor products show that this condition defines $T_\sigma$ completely.


\medskip
\medskip

For later reference we record  expressions of $T_\sigma$ for the following three permutations:
\begin{align}
\sigma_1(1, 2, 3, 4) &= (3, 4, 1, 2), \nonumber \\
\sigma_2(1, 2, 3, 4) &= (2, 1, 3, 4) \,\,\,\,\mbox{and} \nonumber \\
\sigma_3(1, 2, 3, 4) &= (1, 3, 2, 4),\label{distinguished_permutations}
\end{align}
and write $T_j$ instead instead of $T_{\sigma_j}$.  Direct computations starting from ${\bf X}_{{\bf u}^1 ({\bf u}^2)^T}{\bf X}_{{\bf u}^3({\bf u}^4)^T}^T$ give the
\begin{proposition}
We have the identities
\begin{equation}\label{eq:transp_full_mat}
T_1({\mathcal Z}) = {\mathcal Z}^T
\end{equation}
for all ${\mathcal Z}\in \Mbold^{n^2}(\R{})$, as well as
\begin{equation}
T_1(\bX_A \bX_B^T) = \bX_B \bX_A^T,
\end{equation}
\begin{equation}\label{eq:trans_first_mat}
T_2(\bX_A \bX_B^T) = \bX_{A^T} \bX_B^T
\end{equation}
and
\begin{equation}\label{eq:cols_to_blocks}
T_3(\bX_A \bX_B^T) = {\mathcal Q}^L_{A, B}
\end{equation}
for all $A, B \in \Mall$, where ${\mathcal Q}^L_{A, B}$ is the matrix of the linear map $L_{A, B}$ defined in \ref{eq:matrix_pre_post_mult}.  
\end{proposition}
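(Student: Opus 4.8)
The plan is to use linearity to reduce everything to the defining action of $T_\sigma$ on rank-one generators, and then to match block patterns. Since every $A\in\Mall$ is a finite sum of rank-one matrices ${\bf u}{\bf v}^T$ and $\bX$ is linear, the vectors $\bX_{{\bf u}{\bf v}^T}$ span $\R^{n^2}$, so the outer products $\bX_{{\bf u}^1({\bf u}^2)^T}\bX_{{\bf u}^3({\bf u}^4)^T}^T$ span $\Mallnsqr$. As each $T_j$ is linear, it therefore suffices to check every identity on such a generator. For the identities stated for general $A,B$, I would additionally observe that both sides are bilinear in $(A,B)$ — the left side because $A\mapsto\bX_A$ is linear, the outer product is bilinear, and $T_j$ is linear; the right sides because $A\mapsto\bX_{A^T}$ is linear and, for $\sigma_3$, because $\mathcal{Q}^L_{A,B}$ is the matrix of the bilinear-in-$(A,B)$ map $L_{A,B}(C)=ACB^T$ — so that matching on rank-one $A$ and rank-one $B$ and summing gives the general case.

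The one computation I will need is the block form of $\bX_{{\bf u}{\bf v}^T}$: viewing $\R^{n^2}$ as $n$ stacked copies of $\R^n$, its $i$-th block is $v_i{\bf u}$ (the $i$-th column of ${\bf u}{\bf v}^T$). Hence the $(i,j)$-block of $\bX_{{\bf u}^1({\bf u}^2)^T}\bX_{{\bf u}^3({\bf u}^4)^T}^T$ is $u^2_i u^4_j\,{\bf u}^1({\bf u}^3)^T$. With this, for $\sigma_1=(3,4,1,2)$ the defining formula gives $T_1$ of the generator equal to $\bX_{{\bf u}^3({\bf u}^4)^T}\bX_{{\bf u}^1({\bf u}^2)^T}^T$, which is precisely the transpose of the generator; since the generators span, $T_1(\mathcal{Z})=\mathcal{Z}^T$ for all $\mathcal{Z}\in\Mallnsqr$, and applying this to $\mathcal{Z}=\bX_A\bX_B^T$ yields $T_1(\bX_A\bX_B^T)=(\bX_A\bX_B^T)^T=\bX_B\bX_A^T$. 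For $\sigma_2=(2,1,3,4)$ the formula gives $\bX_{{\bf u}^2({\bf u}^1)^T}\bX_{{\bf u}^3({\bf u}^4)^T}^T$, and since $({\bf u}^1({\bf u}^2)^T)^T={\bf u}^2({\bf u}^1)^T$, the bilinearity reduction upgrades this to $T_2(\bX_A\bX_B^T)=\bX_{A^T}\bX_B^T$.

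The case I expect to be the most delicate is $\sigma_3=(1,3,2,4)$, since it equates a permutation operator with the block-reshuffling operator $L_{A,B}$. Its defining formula gives $T_3$ of the generator equal to $\bX_{{\bf u}^1({\bf u}^3)^T}\bX_{{\bf u}^2({\bf u}^4)^T}^T$, whose $(i,j)$-block, by the block formula, is $u^3_i u^4_j\,{\bf u}^1({\bf u}^2)^T$. On the other hand, for $A={\bf u}^1({\bf u}^2)^T$ and $B={\bf u}^3({\bf u}^4)^T$ the matrix $\mathcal{Q}^L_{A,B}$ has $(i,j)$-block $B_{ij}A=u^3_i u^4_j\,{\bf u}^1({\bf u}^2)^T$, so the two agree on generators, and bilinearity closes the case. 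I do not anticipate any genuine obstacle here beyond bookkeeping: correctly tracking which of ${\bf u}^1,\dots,{\bf u}^4$ is routed into which of the four index slots by each $\sigma_j$, and confirming that the resulting block pattern coincides with transposition (for $\sigma_1,\sigma_2$) and with the explicit block form of $\mathcal{Q}^L_{A,B}$ (for $\sigma_3$). There is no analytic content.
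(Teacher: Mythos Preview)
Your proposal is correct and is precisely the direct computation the paper leaves implicit: the paper's proof consists solely of the phrase ``Direct computations starting from ${\bf X}_{{\bf u}^1 ({\bf u}^2)^T}{\bf X}_{{\bf u}^3({\bf u}^4)^T}^T$,'' and your verification on rank-one generators together with the bilinearity extension is exactly what that phrase encodes. There is no difference in approach.
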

\begin{rem}
Let $P^1, ..., P^n\in \Mall$ satisfy $(P^j)^2 = (P^j)^T = P^j$, $P^jP^k = P^kP^j = \delta_{kj}P^j$, ${\rm tr}(P^j)=1$, and
$$
\sum_{j=1}^n P^j = I_n.
$$
Define ${\mathcal Q}\in \Mallnsqr$ by
$$
{\mathcal Q} = \sum_{j=1}^n {\bf X}_{P^j} {\bf X}_{P^j}^T.
$$
It is easy to check that $T_3({\mathcal Q}) = {\mathcal Q}$.  This plus \ref{eq:cols_to_blocks} shows that the equation above provides an equivalent definition for ${\mathcal Q}$ defined in \ref{e:Qdef}.
\end{rem}





We now turn to the proof of Theorem \ref{thm:limitrelax}.  As we shall see, the proof below gives both Theorem \ref{thm:limitrelax}, as well as the fact that the odeco variety, defined in \ref{eq:def_Robeva}, is equal to that defined by equation \ref{eq:odecodef}.
\begin{proof}[Proof of Theorem~\ref{thm:limitrelax}]
For ${\mathcal Q} \in \Mallnsqr$ define
$$
{\mathcal Q}_L = \Phi_0({\mathcal Q}) \in {\mathcal L}(\Mall),
$$
where $\Phi_0$ is the isomorphism defined in \ref{def_second_id}.  Rather than assuming immediately that
$$
T_\sigma({\mathcal Q}) = {\mathcal Q}^2 = {\mathcal Q}
$$
for all $\sigma \in S_4$, we assume ${\mathcal Q}\in \Mallnsqr$ satisfies ${\mathcal Q}\neq 0$ and
$$
T_\sigma({\mathcal Q}) = {\mathcal Q}, \,\,\, T_\sigma({\mathcal Q}^2) = {\mathcal Q}^2 \,\,\,\mbox{and}\,\,\, T_\sigma({\mathcal Q}^4) = {\mathcal Q}^4
$$
for all $\sigma \in S_4$.  This is equivalent to assuming that
$$
\mathbb{P}_{Sym}({\mathcal Q}) = {\mathcal Q}, \,\,\,\mathbb{P}_{Sym}({\mathcal Q}^2) = {\mathcal Q}^2, \,\,\,\mbox{and}\,\,\,\mathbb{P}_{Sym}({\mathcal Q}^4) = {\mathcal Q}^4,
$$
where $\mathbb{P}_{Sym}$ is the projection operator defined in \ref{eq:projsymtensor}.  The reason for assuming this is to make this proof work for both Theorem~\ref{thm:limitrelax} and Corollary~\ref{cor:ODECO}.

The proof consists of three main steps: First, use standard linear algebra to write ${\mathcal Q}$ as a linear combination of projections, and show further that the images of these projections contain only symmetric matrices.  Second, we show that the $n\times n$ matrices in the images of these projections commute with each other.  Third, we use this to finish the proof.  

For the first step we proceed as follows.  First recall that $T_\sigma({\mathcal Q}) = {\mathcal Q}$ for all $\sigma \in S_4$.  From equation \ref{eq:transp_full_mat} we conclude that
$$
T_1({\mathcal Q}) = {\mathcal Q}^T = {\mathcal Q}.
$$
By the standard spectral theorem we deduce the existence of an orthonormal basis of the image of ${\mathcal Q}$, denoted by $Q^1, ..., Q^m \in \Mall$, and real numbers $\lambda_1, ..., \lambda_m \in \mathbb{R}\setminus \{0\}$, such that
$$
{\mathcal Q}_L(A) = \sum_{j=1}^m \lambda_j \langle A, Q^j\rangle Q^j = \sum_{j=1}^n \lambda_j R_{Q^j}(A),
$$
where the notation $R_{B} = R_{B, B}$ was defined in \ref{eq:def_R_A_B}.  Let us now recall here that by the comment after equation \ref{eq:rel_Q_lin_map}, the vectors ${\bf X}_{Q^j}\in \R^{n^2}$ are eigenvectors of $\mathcal Q$ with eigenvalue $\lambda_j$, and $\langle {\bf X}_{Q^j}, {\bf X}_{Q^k} \rangle = \delta_{j, k}$ for each $j, k =1, ..., m$.  We deduce that
$$
{\mathcal Q} = \sum_{j=1}^m \lambda_j {\bf X}_{Q^j}{\bf X}_{Q^j}^T.
$$

We complete the first step of this proof by showing that the $Q^j$ are symmetric.  To do this we appeal to the permutation $\sigma_2$ from (\ref{distinguished_permutations}), and its operator $T_2$.  Equation \ref{eq:trans_first_mat} gives us
$$
{\mathcal Q} = T_2({\mathcal Q}) = \sum_{j=1}^m \lambda_j {\bf X}_{{Q^j}^T}{\bf X}_{Q^j}^T.
$$
It is not hard from here to deduce that in fact $Q^j = (Q^j)^T$ for $j=1, ..., m$.  So far then we have
$$
{\mathcal Q} = \sum_{j=1}^m \lambda_j {\bf X}_{Q^j}{\bf X}_{Q^j}^T
$$
with $\langle Q^i, Q^j\rangle = \delta_{ij}$ and $(Q^j)^T = Q^j$.  Note that the fact that $Q_j^T = Q_j$, $j=1, ..., m$, tells us that $1\leq m \leq \frac{n(n+1)}{2}$.  This finishes the first step.

In the second step we show that the $Q^j$ commute with each other, for which we proceed as follows.  First we observe that
$$
\mathcal{Q}^2 = \sum_{j=1}^m \lambda_j^2 \, \bX_{Q^j}\bX_{Q^j}^T.
$$
Since $T_3({\mathcal Q^2}) = {\mathcal Q^2}$, equation \ref{eq:cols_to_blocks} gives us
$$
{\mathcal Q^2} = T_3({\mathcal Q^2}) = \sum_{j=1}^m \lambda_j^2 T_3({\bf X}_{Q^j}{\bf X}_{Q^j}^T) = \sum_{j=1}^m \lambda_j^2 \left ( \begin{array}{ccc} Q^j_{11}Q^j & \cdots & Q^j_{1n}Q^j \\ \vdots & \ddots & \vdots \\ Q^j_{n1}Q^j & \cdots & Q^j_{nn}Q^j \end{array}  \right ).
$$
A direct computation from this last equation then shows that
$$
{\mathcal Q}^4 = \sum_{j=1}^m \lambda_j^4 \, {\bf X}_{Q^j}{\bf X}_{Q^j}^T = \sum_{i, j=1}^m \lambda_i^2\lambda_j^2 \left ( \begin{array}{ccc} (Q^iQ^j)_{11}Q^iQ^j & \cdots & (Q^iQ^j)_{1n}Q^iQ^j \\ \vdots & \ddots & \vdots \\ (Q^iQ^j)_{n1}Q^iQ^j & \cdots & (Q^iQ^j)_{nn}Q^iQ^j \end{array}  \right ).
$$
Since ${\mathcal Q}^4 = T_{3}({\mathcal Q}^4)$, again by \ref{eq:cols_to_blocks} we conclude that
$$
\sum_{j=1}^m \lambda_j^4 \left ( \begin{array}{ccc} Q^j_{11}Q^j & \cdots & Q^j_{1n}Q^j \\ \vdots & \ddots & \vdots \\ Q^j_{n1}Q^j & \cdots & Q^j_{nn}Q^j \end{array}  \right ) = \sum_{i, j=1}^m \lambda_i^2\lambda_j^2  \left ( \begin{array}{ccc} (Q^iQ^j)_{11}Q^iQ^j & \cdots & (Q^iQ^j)_{1n}Q^iQ^j \\ \vdots & \ddots & \vdots \\ (Q^iQ^j)_{n1}Q^iQ^j & \cdots & (Q^iQ^j)_{nn}Q^iQ^j \end{array}  \right ).
$$
Denote by ${\mathcal Q}_L^4 = \Phi_0({\mathcal Q}^4)$, the linear map associated to $\mathcal{Q}^4$.  From the computations above we obtain directly that, for any $A\in \Mall$, it holds
$$
{\mathcal Q}_L^4(A) = \sum_{j=1}^m \lambda_j^4 \langle Q^j, A\rangle Q^j = \sum_{j, k=1}^m \lambda_j^2\lambda_k^2 \langle Q^jQ^k, A \rangle Q^jQ^k.
$$
Next recall that
$$
(A, B) = AB + BA \,\,\,\, \mbox{and} \,\,\,\,  [A, B] = AB-BA.
$$
Since
$$
2Q^jQ^k = (Q^j, Q^k) + [Q^j, Q^k],
$$
it is easy to check that
$$
{\mathcal Q}^4_L(A) =  \sum_{j=1}^m \lambda_j^4 \langle Q^j, A\rangle Q^j = \frac{1}{4} \sum_{j, k=1}^m \lambda_j^2\lambda_k^2 \langle (Q^j, Q^k), A \rangle (Q^j, Q^k)  + \frac{1}{4} \sum_{j, k=1}^m \lambda_j^2\lambda_k^2 \langle [Q^j, Q^k], A \rangle [Q^j, Q^k].
$$

We are finally in a position to conclude that the $Q^j$ commute with each other.  For this consider an anti-symmetric $A\in \Mall$ in the above equation.  The first expression for ${\mathcal Q}_L^4$ gives ${\mathcal Q}_L^4(A)=0$ because $(Q^j)^T=Q^j$.  Then, since $(Q^j, Q^k)$ is symmetric, we get
\begin{equation} \label{eq:4thcomm}
0 = \frac{1}{4} \sum_{j, k=1}^m \lambda_j^2\lambda_k^2 \langle [Q^j, Q^k], A \rangle [Q^j, Q^k]= \frac{1}{2} \sum_{1\leq j < k \leq m} \lambda_j^2\lambda_k^2 \langle [Q^j, Q^k], A \rangle [Q^j, Q^k]
\end{equation}
for every anti-symmetric $A\in \Mall$.  Since $[Q^j, Q^k]$ is anti-symmetric, and $\lambda_j^2 > 0$, we deduce
$$
[Q^j, Q^k] = 0
$$
for all $j, k = 1, ..., m$.  This is of course the statement that the $Q^j$ commute with each other, and ends the second step of the proof.

From here it is now easy to conclude the proof of the theorem.  Indeed, since the $Q^j$ commute with each other and are symmetric, they have a common orthonormal basis of eigenvectors.

If we denote this basis by ${\bf a}^1$, ..., ${\bf a}^n$, and define the associated projections
$$
P^j = {\bf a}^j({\bf a}^j)^T,
$$
then the $Q_j$ are all linear combination of the $P_j$.  Note this tells us that $1\leq m \leq n$.  Further, an appeal to equation \ref{eq:cols_to_blocks} shows that $\mathcal{Q}_L = \Phi_0(\mathcal{Q})$ satisfies
$$
{\mathcal Q}_L(A) = \sum_{j=1}^m \lambda_j \langle A, Q^j\rangle Q^j = \sum_{j=1}^n \lambda_j Q^jAQ^j.
$$
This shows that the $P^j$ are all eigenvectors of ${\mathcal Q}_L$, which is equivalent to saying that the $\bX_{P^j}$ are all eigenvectors of $\mathcal{Q}$.  Because of this we can write
\begin{equation}\label{eq:end_odeco_proof}
\mathcal{Q} = \sum_{j=1}^n \mu_j \bX_{P^j}\bX_{P^j}^T
\end{equation}
for real numbers $\mu_j$, some of which may be $0$.

To conclude the proof, we point out that the additional assumption that $\mathcal{Q}^2 = \mathcal{Q}$ tells us that $\mu_j=1$ for $j=1, ..., m$.  Furthermore, the trace conditions included in the definition given by equation \ref{eq:mfld_first_def} tell us that $m=n$.  This concludes the proof of Theorem \ref{thm:limitrelax}.
\end{proof}

{
A direct consequence of the proof of Theorem \ref{thm:limitrelax} is an alternative formulation of odeco varieties. Odeco stands for orthogonally decomposable, which can be observed in the definition below from the fact that the matrices $P^j$ that appear there are all orthogonal projection matrices of rank $1$ with mutually orthogonal images.  We recall the definition of these varieties, see \cite{Robeva}:
\begin{multline}\label{eq:def_Robeva}
\mathbb{M}^n_{\text{odeco}}  = \left\{\mathcal{Q}\in \Mallnsqr : \mathcal{Q}= \sum_{j=1}^n \lambda_j \bX_{P^j}\bX_{P^j}^T,\,\,\,  P^j\in \mathbb{M}^n, {\rm tr}(P^j) = 1, \right. \\
\left. \,\,\, (P^j)^2=(P^j)^T=P^j,  \,\,\, P^jP^k = \delta_{jk}, \,\,\, \lambda_j \in \mathbb{R} \right\}.
\end{multline}
We now can state the following


\begin{corollary}
\label{cor:ODECO}
Definition \eqref{eq:odecodef} is equivalent to definition \eqref{eq:def_Robeva}.
\end{corollary}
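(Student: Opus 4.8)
The plan is to prove the two inclusions separately, exploiting the fact that the proof of Theorem~\ref{thm:limitrelax} was deliberately arranged to yield more than the theorem itself. The trivial tensor $\mathcal{Q}=0$ lies in both sets (take all $\lambda_j=0$ in \eqref{eq:def_Robeva}, and note that $0$ vacuously satisfies the symmetry conditions in \eqref{eq:odecodef}), so I would assume $\mathcal{Q}\neq 0$ from the outset.

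First I would show that the set defined by \eqref{eq:odecodef} is contained in the set defined by \eqref{eq:def_Robeva}. Here I would simply observe that the hypotheses in \eqref{eq:odecodef}, namely $(\mathbb{I}-\mathbb{P}_{Sym})(\mathcal{Q})=(\mathbb{I}-\mathbb{P}_{Sym})(\mathcal{Q}^2)=(\mathbb{I}-\mathbb{P}_{Sym})(\mathcal{Q}^4)=0$, are exactly the running assumptions of the proof of Theorem~\ref{thm:limitrelax}: that argument only invokes the stronger relation $\mathcal{Q}^2=\mathcal{Q}$ in its final paragraph. Carrying the proof through equation \eqref{eq:end_odeco_proof} therefore produces, for any $\mathcal{Q}$ satisfying \eqref{eq:odecodef}, an orthonormal basis $\mathbf{a}^1,\dots,\mathbf{a}^n$ of $\R^n$, associated rank-one projections $P^j=\mathbf{a}^j(\mathbf{a}^j)^T$, and real numbers $\mu_j$ with $\mathcal{Q}=\sum_{j=1}^n\mu_j\bX_{P^j}\bX_{P^j}^T$. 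Since the $P^j$ are mutually orthogonal rank-one orthogonal projections with $\operatorname{tr}(P^j)=1$, this is precisely membership in \eqref{eq:def_Robeva}.

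For the reverse inclusion, I would take $\mathcal{Q}=\sum_{j=1}^n\lambda_j\bX_{P^j}\bX_{P^j}^T$ with $P^j=\mathbf{a}^j(\mathbf{a}^j)^T$ for an orthonormal basis $\{\mathbf{a}^j\}$, and verify the three defining conditions of \eqref{eq:odecodef} by direct computation. Writing $\bX_{P^j}\bX_{P^j}^T=\bX_{\mathbf{a}^j(\mathbf{a}^j)^T}\bX_{\mathbf{a}^j(\mathbf{a}^j)^T}^T$, all four vector slots of this rank-one block carry the same $\mathbf{a}^j$, so each permutation operator $T_\sigma$ fixes it; hence $T_\sigma(\mathcal{Q})=\mathcal{Q}$ for every $\sigma\in S_4$, equivalently $(\mathbb{I}-\mathbb{P}_{Sym})(\mathcal{Q})=0$. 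Next, $\langle P^j,P^k\rangle=\operatorname{tr}(P^jP^k)=\delta_{jk}$ gives $(\bX_{P^j}\bX_{P^j}^T)(\bX_{P^k}\bX_{P^k}^T)=\delta_{jk}\bX_{P^j}\bX_{P^k}^T$, so $\mathcal{Q}^2=\sum_j\lambda_j^2\bX_{P^j}\bX_{P^j}^T$ and, iterating, $\mathcal{Q}^4=\sum_j\lambda_j^4\bX_{P^j}\bX_{P^j}^T$. Both are diagonal sums of the same rank-one blocks and hence $T_\sigma$-invariant by the argument just given, so $(\mathbb{I}-\mathbb{P}_{Sym})(\mathcal{Q}^2)=(\mathbb{I}-\mathbb{P}_{Sym})(\mathcal{Q}^4)=0$, completing the verification of \eqref{eq:odecodef}.

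I do not expect a genuine obstacle: the substance is already contained in the proof of Theorem~\ref{thm:limitrelax}, and the only points requiring care are (i) noting that that argument never uses $\mathcal{Q}^2=\mathcal{Q}$ before reaching \eqref{eq:end_odeco_proof}, so the weaker hypotheses of \eqref{eq:odecodef} already suffice, and (ii) recording the elementary fact that an odeco sum and all its powers are permutation-symmetric. A small bookkeeping remark is the passage between the $T_\sigma$-invariance formulation and the $\mathbb{P}_{Sym}$ formulation, which is immediate because $\mathbb{P}_{Sym}$ is the average of the $T_\sigma$ over $S_4$ and its image is exactly the common fixed subspace of all the $T_\sigma$.
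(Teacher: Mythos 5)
Your proposal is correct and follows the same route as the paper: the authors explicitly set up the proof of Theorem~\ref{thm:limitrelax} under the weaker $\mathbb{P}_{Sym}$ hypotheses on $\mathcal{Q}$, $\mathcal{Q}^2$, $\mathcal{Q}^4$ precisely so that the argument through \eqref{eq:end_odeco_proof} gives one inclusion of Corollary~\ref{cor:ODECO}, note the $\mathcal{Q}=0$ case separately, and dismiss the reverse inclusion as trivial. Your only addition is to spell out that reverse inclusion (computing $\mathcal{Q}^2$, $\mathcal{Q}^4$ for an odeco sum and observing $T_\sigma$-invariance), which is a harmless expansion of what the paper calls trivial.
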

\begin{proof}
The proof of this corollary is exactly the same as the proof of Theorem \ref{thm:limitrelax}, up to, and including equation \ref{eq:end_odeco_proof}.  Note also that the tensor $\mathcal{Q}=0$ can be trivially written in the form given by equation \ref{eq:end_odeco_proof}.  This shows that the variety defined in \ref{eq:odecodef} is a subset of that defined in \ref{eq:def_Robeva}.  The other inclusion is trivial, and so we conclude that these two sets are equal.

\end{proof}

}

\section{Appendix B: Evolution equations for $3$-cross fields.}
\label{ap:2}
In this appendix we present the system of partial differential equations that governs the gradient flow evolution of $3$-cross fields in our simulations.   
Taking variational derivatives of the functional in \eqref{eq:efin}, we arrive at the following system of equations
\begin{multline*}
    q_{1t}-\mathrm{div\,}\left(4(\nabla q_1+\nabla q_4)+\frac{1}{2}\nabla q_8)\right)=-\frac{1}{\varepsilon^2}(-57+64 q_1^3-80 q_5^2-102 q_6^2+61 q_8+48 q_2^2 (-3+4 q_4+q_8)\\+24 q_1^2 (-7+8
q_4+q_8)+8 q_3^2 (-7+8 q_4+q_8)+8 q_3 q_7 (-7+8 q_4+q_8)+8 q_2 (q_6 (-30+42 q_4+13
q_8)\\+2 q_7 (8 q_5+5 q_9))+2 q_1 (79+64 q_2^2+120 q_4^2+96 q_2 q_6+40 q_6^2\\+32 (q_3^2+q_3
q_7+q_7^2)+48 q_4 (-4+q_8)+9 (-4+q_8) q_8+8 (8 q_5^2+4 q_5 q_9+q_9^2))\\+2
(56 q_4^3-19 q_7^2-8 q_5^2 q_8+24 q_6^2 q_8-8 q_7^2 q_8-18 q_8^2+4 q_8^3\\+3 q_4^2
(-46+17 q_8)+32 q_6 q_7 q_9+(-11+4 q_8) q_9^2+4 q_5 (6 q_6 q_7+(-8+q_8) q_9)\\+q_4
(109+32 q_5^2+72 q_6^2+32 q_7^2+3 q_8 (-27+8 q_8)+40 q_5 q_9+16 q_9^2))),
\end{multline*}
\begin{multline*}
    q_{2t}-2\,\mathrm{div}\left(4 \nabla q_2+3 \nabla q_6\right)=-\frac{4}{\varepsilon^2}(32 q_2^3+72 q_2^2 q_6+24 q_3^2 q_6+8 q_1^2 (4 q_2+3 q_6)+2 q_3 (6 q_6
q_7\\+8 q_5 (-1+2 q_4+q_8)+q_9-2 q_4 q_9)+2 q_1 (12 q_2 (-3+4 q_4+q_8)\\+q_6
(-30+42 q_4+13 q_8)+2 q_7 (8 q_5+5 q_9))+q_2 (49+32 q_3^2+108 q_4^2\\+72 q_6^2+36 q_7^2+72
q_4 (-2+q_8)+q_8 (-51+20 q_8)+4 (8 q_5^2+4 q_5 q_9+5 q_9^2))\\+3 (8 q_6^3+4
q_5 q_7 (-3+4 q_4+q_8)+6 (-1+2 q_4) q_7 q_9+q_6 (34 q_4^2\\+q_4 (-47+28 q_8)+4
(4+2 q_5^2+2 q_7^2+q_8 (-5+2 q_8)+q_5 q_9+2 q_9^2)))),
\end{multline*}
\begin{multline*}
    q_{3t}-2\,\mathrm{div}\left(2 \nabla q_3+\nabla q_7\right)=-\frac{4}{\varepsilon^2}(16 q_3^3-18 q_5 q_6+36 q_4 q_5 q_6+2 (6+q_1 (-7+4 q_1)) q_7+24 q_3^2
q_7\\-29 q_4 q_7+16 q_1 q_4 q_7+14 q_4^2 q_7+32 q_5^2 q_7+8 q_6^2 q_7+8 q_7^3+20
q_5 q_6 q_8-16 q_7 q_8+2 q_1 q_7 q_8+20 q_4 q_7 q_8+8 q_7 q_8^2\\+2 q_6
q_9-4 q_4 q_6 q_9+28 q_5 q_7 q_9+8 q_7 q_9^2+2 q_2 (6 q_6 q_7\\+8 q_5
(-1+2 q_4+q_8)+q_9-2 q_4 q_9)+q_3 (15+32 q_2^2-40 q_4\\+48 q_2 q_6+20 q_6^2+24
q_7^2-11 q_8+4 (4 q_1^2+7 q_4^2+4 q_4 q_8+q_8^2\\+q_1 (-7+8 q_4+q_8))+4 (8
q_5^2+4 q_5 q_9+q_9^2))),
\end{multline*}
\begin{multline*}
    q_{4t}-\mathrm{div\,}\left(4 \nabla q_1+11 \nabla q_4+4 \nabla q_8\right)=-\frac{2}{\varepsilon^2}(-57+32 q_1^3-40 q_3^2+221 q_4+56 q_3^2 q_4-285 q_4^2\\+130 q_4^3-56 q_5^2+88 q_4
q_5^2+72 q_3 q_5 q_6-144 q_6^2+216 q_4 q_6^2-58 q_3 q_7+56 q_3 q_4 q_7\\+96
q_5 q_6 q_7-56 q_7^2+88 q_4 q_7^2+109 q_8+16 q_3^2 q_8-276 q_4 q_8\\+168 q_4^2
q_8+32 q_5^2 q_8+96 q_6^2 q_8+40 q_3 q_7 q_8+32 q_7^2 q_8-96 q_8^2\\+120 q_4
q_8^2+32 q_8^3+72 q_2^2 (-2+3 q_4+q_8)+24 q_1^2 (-4+5 q_4+q_8)\\+6 q_2 q_6 (-47+68 q_4+28
q_8)-8 q_6 (q_3-8 q_7) q_9+2 q_5 (-29+28 q_4+16 q_8) q_9\\+8 (-5+7 q_4+4 q_8) q_9^2+8
q_2 (8 q_3 q_5+12 q_5 q_7-q_3 q_9+9 q_7 q_9)\\+q_1 (109+168 q_4^2+24 (q_2+q_6)
(4 q_2+3 q_6)+32 (q_3^2+q_3 q_7+q_7^2)\\+3 q_8 (-27+8 q_8)+6 q_4 (-46+17 q_8)+8
(4 q_5^2+5 q_5 q_9+2 q_9^2))),
\end{multline*}
\begin{multline*}
    q_{5t}-2\,\mathrm{div}\left(4 \nabla q_5+\nabla q_9\right)=-\frac{4}{\varepsilon^2}(32 q_5^3+6 q_6 (q_3 (-3+6 q_4)+2 (-3+q_1+4 q_4) q_7)+4 q_6 (5 q_3+8 q_7)
q_8\\+24 q_5^2 q_9+(12+8 q_1^2+8 q_3^2-29 q_4+14 q_4^2+28 q_3 q_7+32 q_7^2+2 (-7+8
q_4) q_8\\+8 q_8^2+2 q_1 (-8+10 q_4+q_8)) q_9+8 q_9^3+8 q_2^2 (4 q_5+q_9)\\+q_5
(21+32 q_1^2+32 q_3^2+44 q_4^2+36 q_6^2+64 q_3 q_7+68 q_7^2\\+8 q_1 (-5+4 q_4-q_8)-19
q_8+16 q_8^2+8 q_4 (-7+4 q_8)+24 q_9^2)\\+4 q_2 (8 q_1 q_7+4 q_3 (-1+2 q_4+q_8)+3
q_7 (-3+4 q_4+q_8)+3 q_6 (4 q_5+q_9))),
\end{multline*}
\begin{multline*}
    q_{6t}-2\,\mathrm{div}\left(3 \nabla q_2+4 \nabla q_6\right)=-\frac{4}{\varepsilon^2}(24 q_2^3+(49+q_1 (-51+20 q_1)) q_6+72 q_2^2 q_6+20 q_3^2 q_6\\+2 q_3 (8 q_6
q_7+q_5 (-9+18 q_4+10 q_8)+q_9-2 q_4 q_9)+4 (27 q_4^2 q_6\\+9 q_5^2 q_6+8 q_6
(q_6^2+q_7^2)+6 (-3+q_1) q_6 q_8+8 q_6 q_8^2\\+q_5 q_7 (-9+3 q_1+8 q_8)+4
(-1+q_1) q_7 q_9+8 q_6 q_9^2+2 q_4 (6 q_5 q_7+3 q_6 (-6+3 q_1+4 q_8)\\+4 q_7
q_9))+q_2 (24 q_1^2+q_1 (-60+84 q_4+26 q_8)+3 (8 q_3^2+q_4 (-47+34 q_4)\\+4 q_3
q_7+28 q_4 q_8+8 q_8^2+4 (4+2 q_5^2+6 q_6^2+2 q_7^2-5 q_8+q_5 q_9+2 q_9^2)))),
\end{multline*}
\begin{multline*}
    q_{7t}-2\,\mathrm{div}\left(\nabla q_3+4 \nabla q_7\right)=-\frac{4}{\varepsilon^2}(8 q_3^3-36 q_2 q_5+21 q_7+16 q_1^2 q_7+24 q_3^2 q_7+4 (9 q_2^2 q_7\\+17
q_5^2 q_7+q_5 q_6 (-9+12 q_4+8 q_8)+3 q_2 (4 q_4 q_5+4 q_6 q_7+q_5 q_8)\\+q_7
(q_4 (-14+11 q_4)+8 (q_6^2+q_7^2)-10 q_8+8 q_4 q_8+8 q_8^2))\\+2 ((-1+2
q_4) (9 q_2+8 q_6)+32 q_5 q_7) q_9+32 q_7 q_9^2+q_1 (q_7 (-19+32 q_4-8 q_8)\\+4
(8 q_2 q_5+3 q_5 q_6+5 q_2 q_9+4 q_6 q_9))+q_3 (8 q_1^2+14 q_4^2\\+2 q_1
(-7+8 q_4+q_8)+q_4 (-29+20 q_8)+4 (3+8 q_5^2\\+q_6 (3 q_2+2 q_6)+6 q_7^2+2 (-2+q_8)
q_8+7 q_5 q_9+2 q_9^2))),
\end{multline*}
\begin{multline*}
    q_{8t}-\mathrm{div\,}\left(\frac{1}{2}\nabla q_1+4 (\nabla q_4+\nabla q_8)\right)=-\frac{1}{\varepsilon^2}(-57+8 q_1^3+218 q_4-38 q_5^2+158 q_8+6 q_1^2 (-6+8 q_4+3 q_8)\\+2 q_2^2 (-51+72
q_4+40 q_8)+16 q_2 (4 q_3 q_5+3 q_5 q_7+3 q_6 (-5+7 q_4+4 q_8))\\+q_1 (61+102
q_4^2+8 (3 q_2+2 q_6) (2 q_2+3 q_6)+8 (q_3-q_7) (q_3+2 q_7)\\+24 (-3+q_8) q_8+6 q_4
(-27+16 q_8)+8 (-2 q_5^2+q_5 q_9+q_9^2))+2 (56 q_4^3\\+q_3^2 (-11+16 q_4+8 q_8)+6
q_4^2 (-23+20 q_8)+8 q_3 (5 q_5 q_6+q_7 (-4+5 q_4+4 q_8))\\+32 q_4 (q_5^2+3 q_6^2+q_7^2+3
(-2+q_8) q_8+q_5 q_9+q_9^2)+4 (16 q_5 q_6 q_7+2 q_6^2 (-9+8 q_8)\\+2 q_7^2
(-5+8 q_8)+q_8 (8 q_5^2+q_8 (-21+8 q_8))+q_5 (-7+8 q_8) q_9+(-7+8 q_8) q_9^2))),
\end{multline*}
\begin{multline*}
    q_{9t}-2\,\mathrm{div}\left(\nabla q_5+2 \nabla q_9\right)=-\frac{4}{\varepsilon^2}(8 q_5^3+2 q_6 (q_3-2 q_3 q_4+8 (-1+q_1+2 q_4) q_7)+15 q_9+24 q_5^2
q_9\\+(4 q_1^2+q_1 (-11+16 q_4+4 q_8)+4 (q_3^2+q_4 (-10+7 q_4)+4 q_3 q_7+8 (q_6^2+q_7^2)-7
q_8\\+8 q_4 q_8+4 q_8^2)) q_9+16 q_9^3+4 q_2^2 (2 q_5+5 q_9)+q_5 (12+8
q_1^2\\+8 q_3^2-29 q_4+14 q_4^2+28 q_3 q_7+32 q_7^2-14 q_8+16 q_4 q_8+8 q_8^2+2 q_1
(-8+10 q_4+q_8)\\+24 q_9^2)+2 q_2 (q_3-2 q_3 q_4+(-9+10 q_1+18 q_4) q_7+6 q_6
(q_5+4 q_9))).
\end{multline*}
This system is solved subject to the boundary constraints \eqref{eq:bcsys} and the natural (Robin) boundary conditions arising in the variational problem for the functional \eqref{eq:efin}.
\bibliographystyle{abbrv}

\bibliography{GLFbib}

\end{document}